\renewcommand{\Im}{\operatorname{Im}}
\newcommand{\tree}{\mathcal{ST}}
\newcommand{\forest}{\mathcal{SF}_2}
\begin{document} 
\theoremstyle{plain}
\newtheorem{thm}{Theorem}[section]
\newtheorem{lem}[thm]{Lemma}
\newtheorem{cor}[thm]{Corollary}
\newtheorem{prop}[thm]{Proposition}
\newtheorem{prop-defn}[thm]{Proposition-Definition}
\newtheorem{question}[thm]{Question}
\newtheorem{claim}[thm]{Claim}

\theoremstyle{definition}
\newtheorem{remark}[thm]{Remark}
\newtheorem{defn}[thm]{Definition}
\newtheorem{ex}[thm]{Example}
\newtheorem{conj}[thm]{Conjecture}
\numberwithin{equation}{section}
\newcommand{\eq}[2]{\begin{equation}\label{#1}#2 \end{equation}}
\newcommand{\ml}[2]{\begin{multline}\label{#1}#2 \end{multline}}
\newcommand{\ga}[2]{\begin{gather}\label{#1}#2 \end{gather}}
\newcommand{\mc}{\mathcal}
\newcommand{\mb}{\mathbb}
\newcommand{\surj}{\twoheadrightarrow}
\newcommand{\inj}{\hookrightarrow}
\newcommand{\red}{{\rm red}}
\newcommand{\codim}{{\rm codim}}
\newcommand{\rank}{{\rm rank}}
\newcommand{\Pic}{{\rm Pic}}
\newcommand{\Div}{{\rm Div}}
\newcommand{\im}{{\rm im}}
\newcommand{\Spec}{{\rm Spec \,}}
\newcommand{\Sing}{{\rm Sing}}
\newcommand{\Char}{{\rm char}}
\newcommand{\Tr}{{\rm Tr}}
\newcommand{\Gal}{{\rm Gal}}
\newcommand{\Min}{{\rm Min \ }}
\newcommand{\Max}{{\rm Max \ }}
\newcommand{\ti}{\times }
% Skriptbuchstaben
\newcommand{\sA}{{\mathcal A}}
\newcommand{\sB}{{\mathcal B}}
\newcommand{\sC}{{\mathcal C}}
\newcommand{\sD}{{\mathcal D}}
\newcommand{\sE}{{\mathcal E}}
\newcommand{\sF}{{\mathcal F}}
\newcommand{\sG}{{\mathcal G}}
\newcommand{\sH}{{\mathcal H}}
\newcommand{\sI}{{\mathcal I}}
\newcommand{\sJ}{{\mathcal J}}
\newcommand{\sK}{{\mathcal K}}
\newcommand{\sL}{{\mathcal L}}
\newcommand{\sM}{{\mathcal M}}
\newcommand{\sN}{{\mathcal N}}
\newcommand{\sO}{{\mathcal O}}
\newcommand{\sP}{{\mathcal P}}
\newcommand{\sQ}{{\mathcal Q}}
\newcommand{\sR}{{\mathcal R}}
\newcommand{\sS}{{\mathcal S}}
\newcommand{\sT}{{\mathcal T}}
\newcommand{\sU}{{\mathcal U}}
\newcommand{\sV}{{\mathcal V}}
\newcommand{\sW}{{\mathcal W}}
\newcommand{\sX}{{\mathcal X}}
\newcommand{\sY}{{\mathcal Y}}
\newcommand{\sZ}{{\mathcal Z}}
% Sonderbuchstaben mit Doppellinie
\newcommand{\A}{{\mathbb A}}
\newcommand{\B}{{\mathbb B}}
\newcommand{\C}{{\mathbb C}}
\newcommand{\D}{{\mathbb D}}
\newcommand{\E}{{\mathbb E}}
\newcommand{\F}{{\mathbb F}}
\newcommand{\G}{{\mathbb G}}
\renewcommand{\H}{{\mathbb H}}
\newcommand{\I}{{\mathbb I}}
\newcommand{\J}{{\mathbb J}}
\newcommand{\M}{{\mathbb M}}
\newcommand{\N}{{\mathbb N}}
\renewcommand{\P}{{\mathbb P}}
\newcommand{\Q}{{\mathbb Q}}
\newcommand{\R}{{\mathbb R}}
\newcommand{\T}{{\mathbb T}}
\newcommand{\U}{{\mathbb U}}
\newcommand{\V}{{\mathbb V}}
\newcommand{\W}{{\mathbb W}}
\newcommand{\X}{{\mathbb X}}
\newcommand{\Y}{{\mathbb Y}}
\newcommand{\Z}{{\mathbb Z}}
\newcommand{\pic}{{\text{Pic}(C,\sD)[E,\nabla]}}
\newcommand{\ocd}{{\Omega^1_C\{\sD\}}}
\newcommand{\oc}{{\Omega^1_C}}
\newcommand{\al}{{\alpha}}
\newcommand{\be}{{\beta}}
\newcommand{\ta}{{\theta}}
\newcommand{\ve}{{\varepsilon}}
\newcommand{\lr}[2]{\langle #1,#2 \rangle}
\newcommand{\nnn}{\newline\newline\noindent}
\newcommand{\nn}{\newline\noindent}
\newcommand{\snote}[1]{{\color{blue} Spencer: #1}}

\newcommand{\onote}[1]{{\color{blue} Omid: #1}}
\newcommand{\jnote}[1]{{\color{orange} Javier: #1}}
\newcommand{\jinote}[1]{{\color{teal} Jose: #1}}
\newcommand{\vs}{\mathcal V} 
\newcommand{\ws}{\mathcal W} 
\newcommand{\spanningf}{\mathcal{SF}}
\newcommand{\gr}{\mathrm{gr}}

%%%%%%%%%%%%%%%%%%%%%%
\newcommand{\ok}{R} 
\newcommand{\ovar}{Y} 
\newcommand{\Diag}{\mathrm{Diag}}
\newcommand{\green}{\mathfrak g}
\newcommand{\p}{\mathbf{p}}
\newcommand{\ps}{\mathbf{p}}
\newcommand{\oA}{\mathfrak A}
\newcommand{\oB}{\mathfrak B}
\newcommand{\oD}{\mathfrak D}

\newcommand{\ad}{\operatorname{ad}}
\newcommand{\K}{\mathbb K}
\newcommand{\an}{\operatorname{an}}
\newcommand{\val}{\mathrm{val}}
\newcommand{\Ar}{\mathrm{Ar}}
\newcommand{\onto}{\twoheadrightarrow}  
\newcommand{\oL}{\mathrm{L}}
\newcommand{\rl}{\mathit{l}}

\title[The exchange graph and variations of the ratio of Symanzik polynomials]{The exchange graph and variations of the ratio of the two Symanzik polynomials}
\author[Omid Amini]{Omid Amini}
\address{CNRS--DMA, \'Ecole Normale Sup\'erieure, 45 rue d'Ulm, Paris}
\email{oamini@math.ens.fr}
\date{July 2nd, 2016}
\begin{abstract}
Correlation functions in quantum field theory are calculated using Feynman amplitudes, which 
are finite dimensional integrals associated to graphs. The integrand is the exponential of 
the ratio of the first and second Symanzik polynomials associated to the Feynman graph, which are described in terms of the spanning trees and spanning 2-forests of the graph, respectively.  

In a previous paper with Bloch, Burgos and Fres\'an, we related this ratio to the asymptotic of the 
Archimedean height pairing between degree zero divisors on degenerating families of Riemann surfaces. 
Motivated by this, we consider in this paper the variation of the ratio of the two Symanzik polynomials 
under bounded perturbations of the geometry of the graph. This is a 
natural problem in connection with the theory of nilpotent and SL2 orbits in Hodge theory.

Our main result is the boundedness of variation of the ratio. For this we define the exchange graph of a given graph which encodes the exchange properties between spanning trees and spanning 2-forests in the graph. We provide a description of the connected components of this graph, and use this to prove our result on boundedness of the variations.  

\end{abstract}

\maketitle

\bigskip

\section{Introduction}

 Feynman amplitudes in quantum field theory are described as finite 
 dimensional integrals associated to graphs. 
 A Feynman graph $(G, \p)$ consists of a finite graph $G=(V,E)$, 
 with vertex and edge sets $V$ and $E$, respectively, 
 together with a collection of external momenta 
 $\underline \p=(\ps_v)_{v \in V}$, $\ps_v \in \R^D$, such that $\sum_{v\in V} \ps_v=0$. 
 Here $\R^D$ is the space-time endowed with a Minkowski bilinear form.
 
 One associates  to a Feynman graph $(G, \underline \p)$ two polynomials in the variables $\underline{Y}=(Y_e)_{e \in E}$. Denote by $\tree$ the set of all the spanning trees of the graph $G$. (Recall that a spanning tree of a connected graph is a maximal subgraph which does not contain any cycle. 
 It has precisely $|V|-1$ edges.) The first Symanzik $\psi_G$, which depends only on the graph $G$, is given by the following sum over the spanning trees of $G$:
$$
\psi_G(\underline{Y}):=\sum_{T \in \tree} \prod_{e \notin T} Y_e.
$$ 

A spanning 2-forest in a connected graph $G$ is a maximal subgraph of $G$ without any cycle and
with precisely two connected components. Such a subgraph has precisely $|V|-2$ edges. Denote by 
$\forest$ the set of all the spanning 2-forests of $G$. The second Symanzik polynomial $\phi_G$, which depends on the external momenta as well, is defined by
$$
\phi_G(\underline \p, \underline{Y}):=\sum_{F \in \forest} q(F) \prod_{e \notin F} Y_e.
$$ Here $F$ runs through the set of spanning $2$-forests of $G$, and for $F_1$ and $F_2$ the two connected components of $F$, $q(F)$ is the real number $-\langle \p_{F_1}, \p_{F_2}\rangle$, where $\p_{F_1}$ and $\p_{F_2}$ denote the total momentum entering  the two connected  components $F_1$ and $F_2$ of ${F}$, i.e., 
\[\p_{F_1} := \sum_{v\in V(F_1)} \p_{v} \qquad \qquad \p_{F_2} := \sum_{u\in V(F_2)} \p_{u}. \]

The Feynman amplitude associated to $(G, \underline \p)$ is a path integral on the space of metrics (i.e., edge lengths) on $G$ with the action given by $\phi_G/\psi_G$. It is given by 
\begin{equation*}
I_G(\underline \p) =C\int_{[0,\infty]^E}\exp(-i\,\phi_G/\psi_G)\,\, d\pi_G,
\end{equation*} for a constant $C$, and the volume form 
$d\pi_G = \psi_G^{-D/2} \prod_E dY_e$ on $\R_+^E$, c.f.~\cite[Equation (6-89)]{IZ}.

\smallskip

Motivated by the question of describing Feynman amplitudes as the infinite tension limit of bosonic 
string theory, in~\cite{ABBF} we proved results describing the ratio of the two Symanzik polynomials
in the Feynman amplitude as asymptotic behavior of the Archimedean height pairing between degree zero 
divisors in degenerating families of Riemann surfaces. A natural problem arising from~\cite{ABBF} is to consider the variation of $\phi_G/\psi_G$ obtained by perturbation of the geometry of the graph, 
in a sense that we describe below. In order the state the theorem, we need to recall the determinantal representation of the two Symanzik 
polynomials. We refer to~\cite{ABBF} where the discussion below appears in more detail.

\subsection{Determinantal representation of the Symanzik polynomials}\label{sec:mainthmintro}

Let $G = (V,E)$ be a finite connected graph on the set of vertices $V$ of size $n$ 
and with the set of edges $E = \{e_1, \dots, e_m\}$ of size $m$.  Denote by $h$ the genus of $G$, which is by definition the integer $h = m - n +1$. 

\medskip

Let $\ok$ be a ring of coefficients (that we will later assume to be either $\mathbb R $ or $\mathbb Z$), 
and consider the free $\ok$-module $\ok^E \simeq \ok^m = \bigl\{\, \sum_{i=1}^m a_ie_i\ |\ a_i \in \ok\,\bigr\}$ of rank $m$ generated by the elements of $E$. For any element $a\in \ok^E$, we denote by $a_i$ the coefficient of $e_i$ in $a$.

Any edge $e_i$ in $E$ gives a bilinear form of rank one $\langle.\,,.\rangle_i$ on $\ok^m$ by the formula
\[\langle a, b \rangle_i := a_i b_i.\]

Let $\underline{y}=\{y_i\}_{e_i\in E}$ be a collection of  elements of $\ok$ 
indexed by $E$, and consider the symmetric bilinear form 
$\alpha = \langle.\,,.\rangle_{\underline y}:= \sum_{e_i\in E}y_i\langle .\,,.\rangle_i$.
In the standard basis $\{e_i\}$ of $\ok^E$, $\alpha$ is the diagonal matrix with $y_i$ in the $i$-th entry, for $i=1, \dots, m$.  We denote by $Y : =\mathrm{diag}(y_1,\dots, y_m\}$ this diagonal matrix. 

\medskip

Let $H \subset \ok^E$ be a free $\ok$-submodule of rank $r$. 
The bilinear form $\alpha$ restricts to a bilinear form $\alpha_{|H}$ on $H$. 
Fixing a basis $B=\{\gamma_1, \dots, \gamma_r\}$ of $H$ over $\ok$, and denoting by $M$ the 
$r\times m$ matrix with row vectors $\gamma_i$ written in the standard basis $\{e_i\}$ of $\ok^E$, the restriction $\alpha_{|H}$ can be identified with the symmetric $r\times r$ matrix $MYM^\tau$ so that for two vectors $c, d \in \ok^r \simeq H$ with $a =\sum_{j=1}^r c_j \gamma_j$ and $b = \sum_{j=1}^r d_j \gamma_j$, we have 
\[\alpha(a,b) = cMYM^\tau d^\tau. \]

\medskip

The Symanzik polynomial $\psi(H,\underline y)$ associated to the free $\ok$-submodule $H\hookrightarrow \ok^E$ 
is defined as
\begin{equation*}
\psi(H,\underline y):=\det(MYM^\tau).
\end{equation*}

Note that since the coordinates of $MYM^\tau$ are linear forms in $y_1, \dots, y_m$, 
$\psi(H,\underline y)$ is a homogeneous polynomial of degree $r$ in $y_i$s.  

 For a different choice of basis $B' = \{\gamma_1',\dots, \gamma_r'\}$ of $H$ over $\ok$, the matrix $M$ is replaced by $PM$ where $P$ is the $r \times r$
  invertible matrix over $\ok$ transforming one basis into the other. So the matrix of $\alpha_{|H}$ in the new basis is given by  
  ${P}T Y T^\tau P^\tau$, , and the
  determinant gets multiplied by an element of $\ok^{\times 2}$. It follows that $\psi(H,\underline y)$ is well-defined up to an invertible element in $R^{\times 2}$. In particular, if $\ok =\mathbb Z$,  the quantity $\psi(H,\underline y)$ is independent of the choice of the basis and is therefore well-defined.

\medskip

From now on, we  fix an orientation on the edges of the graph. We have a boundary map 
$\partial: \ok^E \to \ok^V,\ e \mapsto \partial^+(e)-\partial^-(e)$, 
where $\partial^+$ and $\partial^-$ denote the head and the tail of
$e$, respectively. The homology of $G$ is defined via the exact
sequence
\begin{equation}\label{eq:6}
0 \rightarrow H_1(G,\ok) \rightarrow \ok^E \xrightarrow{\partial} \ok^V
\rightarrow \ok \rightarrow 0.
\end{equation}
The homology group $H = H_1(G,\ok)$ is  a submodule of $\ok^m$ free of rank $h$, the genus of the graph $G$, for any ring $\ok$. In particular, fixing a basis $B$ of $H_1(G,\Z)$, the polynomial 
$$\psi_G(\underline y) :=\psi(H, \underline y)$$
is independent of the choice of $B$. Writing $M$ for the $h\times m$ matrix of the basis $B$ in the standard basis $\{e_i\}$ of $\ok^E$, one sees that 
\[\psi_G(\underline y)  = \det(MYM^\tau).\] 

It follows from the Kirchhoff's matrix-tree theorem~\cite{Kir} that 
\begin{equation*}
\psi_G (\underline \ovar) = \sum_{T\in \tree}\prod_{e\not\in T}\ovar_e,
\end{equation*}
which is the form of the first Symanzik polynomial given at the beginning of this section.

\smallskip

The exact sequence \eqref{eq:6} yields an isomorphism
\begin{equation*}
\ok^{E}/H \simeq \ok^{V,0}, 
\end{equation*}
where $\ok^{V, 0}$ consists of those $x \in \ok^V$ whose coordinate sum to zero. 

Let now $\p \in \ok^{V,0}$ be a non-zero element, and let $\omega$ be any element in $\partial^{-1}(\p)$. Denote by $H_\omega = \partial^{-1} (\ok . \p) = H + \ok.\omega$, and note that $H_\omega$ is a free $\ok$-module of rank $h+1$ which comes with the basis $B_\omega = B \sqcup \{\omega\}$.

 The second Symanzik polynomial of $(G, \underline \p)$ is 
\begin{equation*}
\phi_G(\underline \p, \underline y)=\psi(H_\omega, \underline y)
\end{equation*} for the element $\omega \in \ok^E$ with $\p=\partial(\omega)$. 
The polynomial 
$\phi_G(\underline \p, \underline y)$ is homogeneous of degree $h+1$ in $y_i$'s, which is
as noted in~\cite{ABBF},  
independent of the choice of the element $\omega \in \partial^{-1}(\p)$. Writing $N$ for 
the $(h+1)\times m$ matrix for the the basis $B_\omega$ in the standard basis of $\ok^E$, we see that 
\[\phi_G(\underline \p, \underline y) = \det(NYN^\tau).\] 

The definition can be extended to $\p\in \R^D$ using the Minkowski bilinear form on $\R^D$, as discussed in~\cite{ABBF}. 

We have the following expression for the second Symanzik polynomial, c.f. e.g. to~\cite{Chaiken}, or Section~\ref{sec:proof},
\[\phi_G(\underline \p, \underline y) = \sum_{F \in \forest} q(F)\prod_{e\notin E(F)} y_{e}\,,\]
which is the form of the second Symanzik polynomial given previously. 

\subsection{Statement of the main theorem}\label{sec:mainthmintro}

Let $U$ be a topological space and $y_1,\dots, y_m : U \rightarrow \mathbb R_{>0}$ be $m$ continuous functions. Let $\p  \in (\mathbb R)^{V,0}$ be a fixed vector, and let $\psi_G(\underline y): U \rightarrow \R_{>0}$ and $\phi_G(\p,\underline y) : U \rightarrow \R_{>0}$ be the real-valued functions  on $U$ defined by the first and second Symanzik polynomials. 

\medskip

\noindent \textbf{Notation.}  We will use the following terminology in what follows: for two real-valued functions $F_1$ and $F_2$ defined on $U$, we write $F_1 =  O_{\underline y}(F_2)$ if there exist constants $c,C>0$ such that $|F_1(s)| \leq c |F_2(s)|$ on all points $s\in U$  which verify $y_1(s), \dots, y_m(s) \geq C$. 

\medskip

 Let $A: U \rightarrow \mathrm{Mat}_{m\times m}(\R)$ be a matrix-valued map taking at $s\in U$ the value $A(s)$.  Assume that $A$ verifies the following two properties
 
 \begin{itemize}
 \item[(i)] $A$ is a bounded function, i.e., all the entries $A_{i,j}$ of $A$ take values in a bounded interval $[-C, C]$ of $\R$, for some positive constant $C>0$.
 \item[(ii)] The two matrices $M(Y+A)M^\tau$ and $N (Y+A) N^\tau$ are invertible. 
 \end{itemize}
 
 One might view the contribution of $A$ as a perturbation of the standard scalar product on the edges of the graph given by 
 the (length) functions $y_1, \dots, y_m$, which can be further regarded as changing the geometry of the 
 graph, seen as a discrete metric space.  The main result of this paper is the following.
 
 \begin{thm}\label{claim:aux} Assume $A: U \rightarrow  \mathrm{Mat}_{m\times m}(\R)$ verifies the condition (i) and $(ii) $ above.  
    The difference $\frac{\det(N (Y + A)N^\tau)}{\det (M(Y+A)M^\tau)} - \frac{\det(N Y N^\tau)}{\det(MYM^\tau)}$ is  $O_{\underline y} (1)$.
 \end{thm}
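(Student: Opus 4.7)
The plan is to combine the two ratios over a common denominator, use multilinearity of the determinant to expand both as polynomials in $\underline y$, and then reduce the boundedness of the difference to a combinatorial comparison that uses the exchange graph.

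Set $\tilde\psi := \det(M(Y+A)M^\tau) = \psi_G + R_\psi$ and $\tilde\phi := \det(N(Y+A)N^\tau) = \phi_G + R_\phi$. From the elementary identity
\[
\frac{\tilde\phi}{\tilde\psi} - \frac{\phi_G}{\psi_G} \;=\; \frac{R_\phi\,\psi_G \;-\; \phi_G\, R_\psi}{\psi_G\,\tilde\psi},
\]
it suffices to bound the right-hand side on $\{y_i \ge C\}$. By row-multilinearity of the determinant applied to $M(Y+A)M^\tau = MYM^\tau + MAM^\tau$, and since $\psi_G$ is the top homogeneous component (degree $h$) of $\tilde\psi$ in $\underline y$, the remainder $R_\psi$ is a polynomial in $\underline y$ of total degree at most $h-1$ with coefficients that are polynomials in the entries of $A$ and hence uniformly bounded by hypothesis (i); similarly $R_\phi$ has $\underline y$-degree at most $h$. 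A Cauchy--Binet expansion further identifies the support of each monomial of $R_\psi$ (resp.\ $R_\phi$) as a co-independent subset in the graphic matroid of $G$ (resp.\ in the extended matroid defined by $H_\omega$): the monomial supported on $S$ can occur only if $S$ is contained in the complement of some spanning tree (resp.\ spanning $2$-forest).

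For the denominator, positivity of the $y_i$'s makes $\psi_G = \sum_{T\in\tree}\prod_{e\notin T}y_e$ a sum of positive monomials, and for any co-independent subset $S\subseteq E$ of size at most $h-1$, extending $S$ to a cobase yields $y^S \le \psi_G/(c\,C^{h-|S|})$ on $\{y_i\ge C\}$; summing over such $S$ gives $|R_\psi|\le K\,\psi_G/C$, so $\tilde\psi\,\psi_G \ge \tfrac12\,\psi_G^2$ for $C$ large enough. The theorem therefore reduces to the polynomial estimate
\[
|R_\phi\,\psi_G \;-\; \phi_G\, R_\psi| \;=\; O_{\underline y}\bigl(\psi_G^2\bigr),
\]
that is, a monomial-by-monomial comparison of two polynomials of total $\underline y$-degree at most $2h$ on $\{y_i \ge C\}$.

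This final comparison is the heart of the argument and its main technical obstacle. Via Cauchy--Binet, the products $R_\phi\,\psi_G$ and $\phi_G\,R_\psi$ expand as sums indexed by pairs $(T,X)$ with $T$ a spanning tree and $X$ either another spanning tree (in $\phi_G R_\psi$) or a spanning $2$-forest (in $R_\phi\psi_G$), together with combinatorial data coming from the Cauchy--Binet expansions of the perturbed determinants, and with coefficients bounded in terms of $A$ and $\underline p$. The exchange graph introduced in this paper encodes precisely the elementary exchanges between spanning trees and spanning $2$-forests, and the description of its connected components organizes these pairs into equivalence classes. Within each component, one obtains either an exact cancellation between the top ($\underline y$-degree $2h$) contributions of $R_\phi\psi_G$ and $\phi_G R_\psi$, or a matching of the surviving monomials with monomials of $\psi_G^2$ whose coefficients are uniformly controlled; summing over components yields $|R_\phi\psi_G - \phi_G R_\psi|\le K'\psi_G^2$ and completes the proof. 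Establishing this matching is the principal difficulty, and it is exactly what the analysis of the connected components of the exchange graph is designed to deliver.
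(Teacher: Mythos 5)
Your reduction is correct and essentially matches the paper's. Writing $\tilde\psi = \psi_G + R_\psi$, $\tilde\phi = \phi_G + R_\phi$ and noting that
\[
R_\phi\psi_G - \phi_G R_\psi \;=\; \tilde\phi\,\psi_G - \tilde\psi\,\phi_G,
\]
your target estimate $|R_\phi\psi_G - \phi_G R_\psi| = O_{\underline y}(\psi_G^2)$ is identical to the paper's reduction to $g_2 f_1 - g_1 f_2 = O_{\underline y}(f_1^2)$, and your lower bound $\tilde\psi\,\psi_G \ge \tfrac12\psi_G^2$ for large $y_i$ is exactly the content of the paper's Claim 3.3. Your degree counts and co-independence observations are also sound.

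The problem is what comes after: you assert, but do not prove, that the exchange-graph decomposition produces the required cancellation. The phrase ``Establishing this matching is the principal difficulty, and it is exactly what the analysis of the connected components of the exchange graph is designed to deliver'' concedes the gap explicitly. This is not a small loose end --- it is essentially the entire technical content of Sections 2 and 3 of the paper. To close it one needs, at minimum: (a) the correct indexing object, which is not a set of pairs $(T,X)$ but the bipartite graph $\mathfrak G$ on \emph{triples} $(F_1,F_2,T)$ and $(T_1,T_2,F)$ connected by single-edge pivoting, reflecting that Cauchy--Binet on the perturbed determinants produces two trees (or two forests) simultaneously; (b) an identification of which triples contribute only $O_{\underline y}(\psi_G^2)$ (the ``special'' triples, e.g.\ $(F_1,F_2,T)$ with $F_1\not\sim_v F_2$), together with the observation that the whole connected component of a special triple is negligible; (c) for a non-special component, the multiplicative relation $\det(N_{F_1^c})\det(N_{F_2^c}) = q(F_1)\det(M_{T_1^c})\det(M_{T_2^c})$ coming from Lemma 3.1, which shows each weight $\zeta(\mathfrak w)$ is $q(\mathfrak w)$ times a common factor up to $O_{\underline y}(\psi_G^2)$; and (d) the equality $\sum_{\mathfrak u\in\mathcal V_1}q(\mathfrak u) = \sum_{\mathfrak w\in\mathcal V_2}q(\mathfrak w)$ within each non-special component, which is Proposition 3.8 and whose proof requires the connectivity theorem for the exchange graph (Theorem 2.9) applied to an auxiliary contracted multigraph. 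None of steps (b)--(d) is routine, and step (d) in particular is where the structural result on connected components of $\mathscr H$ actually enters. Naming the tool without executing the combinatorics leaves the heart of the proof missing.
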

 
 To prove this theorem, using Cauchy-Binet formula and some elementary observations, we are led to 
 consider a graph  which encodes the exchange properties between the spanning trees and 2-forests in the graph that we call the exchange graph of $G$,  see Definition~\ref{def:exchange}.  As our first result, we give in Theorem~\ref{thm:conn} 
a classification of the connected components of this graph. This classification theorem combined with further combinatorial arguments are then used in Section~\ref{sec:proof} to prove Theorem~\ref{claim:aux}. 

\medskip

 We note that a similar result has been proved using different tools in a recent paper of Burgos, de Jong and Holmes~\cite{BHJ} in the setting of what is called 
 \emph{normlike functions}. The perturbations in~\cite{BHJ} are required to be symmetric for the method to work, though, strictly speaking, the result in~\cite{BHJ} is more general and goes beyond the case of graphs. In comparison, the methods in this paper are purely combinatorial and  the results on the exchange graph might be of independent interest.

\medskip

 We now explain an application
 of Theorem~\ref{claim:aux} from~\cite{ABBF}, c.f. Theorem~\ref{thm:main1} below, discussed in more detail in 
 Section~\ref{sec:app}.
\subsection{Boundedness of variation of the Archimedean height pairing}
Let $C_0$ be a stable curve of genus $g$ over $\mathbb C$, and with dual graph $G=(V,E)$ which has genus $h = |E|-|V|+1$, $h\leq g$. 

Consider the versal analytic deformation $\pi : \mathcal{C} \to S$ of
$C_0$, where $S$ is a polydisc of dimension $3g-3$. The total space $\mathcal{C}$ is regular and
we let $D_e \subset S$ denote the divisor
parametrizing those deformations in which the point associated to $e$ remains singular. The divisor $D=\bigcup_{e \in E} D_e$ is a normal crossings
divisor whose complement $U=S \setminus D$ can be identified with
$(\Delta^\ast)^E \times \Delta^{3g-3-|E|}$. Assume that two collections of sections of $\pi$ are given, which we denote by 
$\sigma_1=(\sigma_{\ell, 1})_{\ell=1, \ldots, n}$ and $\sigma_2=(\sigma_{\ell, 2})_{\ell=1, \ldots, n}$. Since $\mathcal{C}$ is regular, the
points $\sigma_{\rl,i}(0)$ lie on the smooth locus of $C_0$.  Consider two fixed
 vectors
$\underline \p_1=(\ps_{\rl,1})_{\rl=1}^n$ and
$\underline \p_2=(\ps_{\rl,2})_{\rl=1}^n$ with $\ps_{\rl,i} \in \R^D$
which each satisfy the conservation of
momentum. 
We obtain a pair of relative degree zero $\R^D$-valued divisors
$$
\mathfrak A_s = \sum_{\rl=1}^n \p_{\rl, 1} \sigma_{\rl, 1}, \qquad \mathfrak
B_s = \sum_{\rl=1}^n \p_{\rl,2} \sigma_{\rl,2}.
$$

\smallskip

Assume further that $\sigma_1$ and $\sigma_2$
are disjoint on each fiber of $\pi$. 
To any pair $\mathfrak A$, $\mathfrak B$ of degree zero
(integer-valued) divisors with disjoint support on a smooth projective
complex curve $C$, one associates a real number, the archimedean height
\begin{equation*}
\langle \mathfrak A, \mathfrak B \rangle=\mathrm{Re}(\int_{\gamma_{\mathfrak{B}}} \omega_{\mathfrak A}), 
\end{equation*} by integrating a canonical logarithmic differential
$\omega_{\mathfrak{A}}$ with residue $\mathfrak{A}$ along any
$1$-chain $\gamma_{\mathfrak{B}}$ supported on $C \setminus
|\mathfrak{A}|$ and having boundary $\mathfrak B$. Coupling with the
Minkowski bilinear form on $\R^D$, the definition extends to
$\R^D$-valued divisors~\cite{ABBF}. We thus get a real-valued function 
\begin{equation*}
s \mapsto \langle \mathfrak A_s, \mathfrak B_s \rangle,
\end{equation*}  
defined on $U$.

\smallskip
For any point $s\in U$, and an edge $e\in E$, we denote by $s_e\in \Delta^*$ the $e$-th coordinate of $s$ when $U$ is identified with $\Delta^{*,E}\times \Delta^{3g-3-|E|}$.
For any point $s\in U$ and an edge $e\in E$, define $y_{e}:=\frac{-1}{2\pi }\log|s_{e}|$ and put
$\underline y = \underline y(s)=(y_{e})_{e\in E}$.  We have shown in~\cite{ABBF} that after shrinking $U$, if necessary, 
the asymptotic of the height pairing is given by the following theorem. Here 
$\phi_G(\underline \p, \underline \p', \underline{Y})$ denotes the  bilinear form associated to $\phi_G$ (which is a  quadratic form in $\underline \p$).

\smallskip

\begin{thm}[Amini, Bloch, Burgos, Fres\'an~\cite{ABBF}] \label{thm:main1} Notations as above, there exists a bounded function $h\colon U\
  \to \R$ such that 
  \begin{equation*}
    \langle \mathfrak A_{s},\mathfrak B_{s}\rangle=
    2\pi \frac{\phi_G(\underline {\mathbf p}_1^G, 
  \underline {\mathbf p}_2^G,\underline y)}{\psi_G(\underline y)}
+h(s). 
  \end{equation*}
\end{thm}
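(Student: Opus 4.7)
The plan is to bring $\Delta := \frac{\det(N(Y+A)N^\tau)}{\det(M(Y+A)M^\tau)} - \frac{\det(NYN^\tau)}{\det(MYM^\tau)}$ to a common denominator and show that the resulting numerator is controlled by the (asymptotically positive) denominator. Precisely, with
$$\sN := \det(N(Y{+}A)N^\tau)\,\det(MYM^\tau) \;-\; \det(NYN^\tau)\,\det(M(Y{+}A)M^\tau),$$
one has $\Delta = \sN/\bigl(\det(M(Y{+}A)M^\tau)\det(MYM^\tau)\bigr)$. Assumption (ii) together with the boundedness of $A$ implies that for $y_i \geq C$ with $C$ large enough the denominator is comparable to $\psi_G(\underline y)^2$, so it suffices to establish $|\sN| \leq \mathrm{const}\cdot \psi_G(\underline y)^2$ in this region.

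First I would expand both determinants by the generalized Cauchy--Binet formula:
$$\det(M(Y{+}A)M^\tau) = \sum_{|S|=|T|=h}\det(M_S)\det(M_T)\det\bigl((Y{+}A)[S,T]\bigr),$$
and the analogous identity for $N$ with $|S|,|T|=h+1$. By the matrix-tree theorem, the non-zero summands are indexed by pairs of spanning trees on the $M$-side and pairs of spanning $2$-forests on the $N$-side (with the momentum weight $q(F)$ absorbed into $\det(N_S)\det(N_T)$). Next, by multilinearity of the determinant in its rows,
$$\det\bigl((Y{+}A)[S,T]\bigr) = \sum_{F \subseteq S\cap T}\varepsilon(F;S,T)\Bigl(\prod_{j\in F}y_j\Bigr)\det\bigl(A[S{\setminus}F,\, T{\setminus}F]\bigr),$$
which isolates the $y$-content in a squarefree monomial and confines the $A$-contribution to a bounded minor.

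Plugging the two expansions into $\sN$ and observing that the top-degree $y^{2h+1}$ parts, each equal to $\phi_G\psi_G$, cancel, $\sN$ becomes a polynomial in $\underline y$ of degree at most $2h$. Regrouping by $y$-monomials, the coefficient of each $\prod_e y_e^{n_e}$ is a finite signed sum indexed by quadruples consisting of a spanning tree pair, a spanning $2$-forest pair, and auxiliary subsets recording which indices contribute via $A$ rather than via $Y$. This is exactly the combinatorial data organized by the exchange graph of $G$. I would then invoke Theorem~\ref{thm:conn}: summing over each connected component of the exchange graph collapses the contribution to a bounded quantity, because the component structure matches (tree, tree) configurations on the $M$-side with (2-forest, 2-forest) configurations on the $N$-side. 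Adding the component contributions monomial by monomial yields the uniform bound $|\sN| \leq \mathrm{const}\cdot \psi_G(\underline y)^2$, completing the argument.

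The main obstacle is precisely this coefficient-wise bound. A naive degree count is not enough: although $\sN$ has $y$-degree at most $2h$ after the cancellation of $\phi_G\psi_G$, nothing a priori prevents individual $2h$-degree monomials of $\sN$ from being unbounded relative to $\psi_G^2$, since $\psi_G^2$ need not contain all such monomials. The role of Theorem~\ref{thm:conn} is to expose the algebraic cancellations that pair each tree-tree contribution on the $M$-side with compatible $2$-forest-$2$-forest contributions on the $N$-side inside a single connected component of the exchange graph, ensuring the residual coefficient of each $y$-monomial is dominated by a constant times the corresponding coefficient of $\psi_G^2$, with the constant depending only on $G$ and the uniform bound on the entries of $A$.
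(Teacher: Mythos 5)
You have proved the wrong statement. Theorem~\ref{thm:main1} is the assertion about the Archimedean height pairing $\langle \mathfrak A_s,\mathfrak B_s\rangle$; what you have sketched is a proof of Theorem~\ref{claim:aux}, the purely determinantal estimate. Nothing in your proposal connects the height pairing to the ratio $\det(N(Y{+}A)N^\tau)/\det(M(Y{+}A)M^\tau)$. That connection requires Proposition~\ref{prop:2} (the explicit expression of the height pairing via the nilpotent-orbit theorem), the identification of $\sum_e y_e M_e$ and $\sum_e y_e Z_e$ etc.\ with the matrices $MYM^\tau$ and the blocks of $NYN^\tau$, the observation that the bounded holomorphic data $\Omega_0, W_0, Z_0, h_0$ assemble into a bounded perturbation $\widetilde\beta(s)$ of the edge-length bilinear form on $\R^m$ (this is where the $A$ in Theorem~\ref{claim:aux} actually comes from), and a Schur-complement reduction from genus $g$ to the graph genus $h$ (Proposition~\ref{claim:1}). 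None of this is in your proposal, so at best you have argued the combinatorial ingredient, not the stated theorem.

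Even judged as a proof of Theorem~\ref{claim:aux}, the sketch is right in spirit but leaves the central cancellation unproved. You correctly identify that after Cauchy--Binet, multilinearity and the cancellation of the degree-$(2h+1)$ parts the remaining difficulty is that $\psi_G^2$ does not contain every degree-$2h$ monomial. But the resolution via ``summing over each connected component of the exchange graph'' is asserted, not demonstrated. What the paper actually does is introduce a different auxiliary bipartite graph $\mathfrak G$ on \emph{triples} $(F_1,F_2,T)$ and $(T_1,T_2,F)$, not pairs; define a weight $\zeta$; split the components of $\mathfrak G$ into those containing a \emph{special} vertex (where the weight is already $O_{\underline y}(f_1^2)$, Claim~\ref{claim:k}) and those containing none; and for the latter prove the exact identity $\sum_{\mathfrak u\in\mathcal V_1} q(\mathfrak u)=\sum_{\mathfrak w\in\mathcal V_2} q(\mathfrak w)$ (Proposition~\ref{claim:final}). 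That identity is obtained by contracting the ``frozen'' saturated parts and invoking Theorem~\ref{thm:conn} for the contracted graph, so that the two sides of the bipartition are put in $q$-preserving bijection via the involution $(F,T)\leftrightarrow(T,F)$. Your phrase ``matches $(T,T)$ on the $M$-side with $(F,F)$ on the $N$-side'' does not describe this mechanism; without the special/non-special dichotomy and the $q$-weighted matching, you have no argument that the residual coefficients are dominated by those of $\psi_G^2$, which is exactly the obstacle you flagged.
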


\smallskip
In Section~\ref{sec:app}, we will show how to deduce this theorem from Theorem~\ref{claim:aux} and the explicit formula obtained in~\cite{ABBF}  by means of
the nilpotent orbit theorem in Hodge theory for the variation of the archimedean height pairing, c.f. Proposition~\ref{prop:2}.

\medskip

{\bf Acknowledgments.} It is a pleasure to thank S. Bloch, J. Burgos Gil, and J. Fres\'an for the collaboration and the discussions that are the motivation behind  the results of this paper.

\section{Exchange graph}\label{sec:exchange}

Let $G=(V,E)$ be a connected multigraph with vertex set $V$ and edge set $E$. 
By a spanning subgraph of $G$ we mean a subgraph $H$ of $G$ with $V(H)=V$.  For an integer $k\geq 1$, 
a spanning $k$-forest in $G$ is a subgraph of $G$ with vertex set $V$
without any cycle which has precisely $k$-connected components; a spanning $k$-forest has precisely 
$|V|-k$ edges. For $k=1$, a spanning 1-forest is precisely a spanning tree of $G$. 
We are particularly interested in the ``exchange properties`` between spanning 2-forest and 
spanning trees in a graph $G$. 
To make this precise, we will define a new graph  $\mathscr H$ that we call the exchange graph of $G$. First we need to define an equivalence relation on the set of spanning 2-forests of $G$.

 \begin{defn}\rm
\begin{itemize}
\item For a spanning 2-forest $F$ of a graph $G$, we denote by $\mathcal P(F)=\{X, Y\}$ 
the partition $V = X \sqcup Y$ of the vertices into the vertex sets $X$ and $Y$ of the two connected components of $F$.

\item For any partition $\mathcal P$ of $V$, we denote by $E(\mathcal P)$ the set of all edges in 
$G$ which connect two vertices lying in two different elements  of $\mathcal P$.
\item  Two 2-forests  $F$ and $F'$ are called \emph{(vertex) equivalent}, and we write $F \sim_v F'$, if 
$\mathcal P(F) = \mathcal P(F')$.
\end{itemize}
 \end{defn}

The following proposition is straightforward.
\begin{prop} The following statements are equivalent for $F, F' \in \forest$:
\begin{enumerate}
\item  $F$ and $F'$ are not (vertex) equivalent.
\item there exists an edge $e\in F'$ such that 
  $F \cup\{e\}$ is a tree.
\end{enumerate}
\end{prop}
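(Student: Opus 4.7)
The plan is to first reformulate condition (2) in terms of the partition $\mathcal P(F)$, and then argue each implication separately, with (1)$\Rightarrow$(2) handled by contraposition.

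The key preliminary observation is: for any edge $e\notin F$, the spanning subgraph $F\cup\{e\}$ has exactly $|V|-1$ edges, so it is a tree if and only if it is connected, if and only if $e$ joins the two connected components of $F$. Writing $\mathcal P(F)=\{X,Y\}$, this just says $F\cup\{e\}$ is a tree iff $e\in E(\mathcal P(F))$. Moreover, no edge of $F$ itself lies in $E(\mathcal P(F))$, since the edges of $F$ are internal to its two components. Thus condition (2) is equivalent to the purely combinatorial statement: $F'\cap E(\mathcal P(F))\neq\emptyset$.

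For (2)$\Rightarrow$(1), suppose $e\in F'\cap E(\mathcal P(F))$. The two endpoints of $e$ lie in different parts of $\mathcal P(F)$, but they lie in the \emph{same} component of $F'$ (they are joined by the edge $e$), hence in the same part of $\mathcal P(F')$. So $\mathcal P(F)\neq \mathcal P(F')$, proving (1).

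For (1)$\Rightarrow$(2), I argue the contrapositive: assume no edge of $F'$ crosses $\mathcal P(F)=\{X,Y\}$; I will show $\mathcal P(F')=\mathcal P(F)$. Under this assumption every edge of $F'$ has both endpoints in $X$ or both in $Y$, so each connected component of $F'$ is entirely contained in $X$ or entirely in $Y$. Let $X',Y'$ be the vertex sets of the two components of $F'$. Since $X,Y,X',Y'$ are all nonempty and $V=X\sqcup Y=X'\sqcup Y'$, a short case check (each of $X',Y'$ must sit in one of $X,Y$) forces $\{X',Y'\}=\{X,Y\}$, i.e., $\mathcal P(F')=\mathcal P(F)$. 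This contradicts (1), completing the proof.

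There is no real obstacle here: the only step that needs care is the final case analysis, which amounts to noting that two nonempty sets partitioning $V$ and each contained in one of the two nonempty parts of another partition of $V$ must coincide with those parts.
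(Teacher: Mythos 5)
Your proof is correct. The paper states this proposition without proof (it is labeled ``straightforward''), and your argument---reducing condition (2) to $E(F')\cap E(\mathcal P(F))\neq\emptyset$ via the observation that $F\cup\{e\}$ is a tree iff $e$ crosses the partition $\mathcal P(F)$, then handling each direction, with the contrapositive for (1)$\Rightarrow$(2) via the case check on how the components of $F'$ sit inside $X$ and $Y$---is exactly the standard argument the author is implicitly invoking.
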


\noindent \textbf{Notation.} In what follows, for a spanning subgraph $G'$ of $G=(V,E)$ and $e\in E \setminus E(G')$, we simply write $G'+e$ to denote the spanning subgraph of $G$ with the edge set 
$E(G') \cup\{e\}$. For an edge $e\in E(G')$, we write $G'-e$ for the spanning subgraph of $G$ with the edge set $E(G') \setminus\{e\}$.

\medskip 

\begin{defn}\label{def:exchange}
 The \emph{exchange graph} $\mathscr H = \mathscr H_G = (\mathscr V, \mathscr E)$ of $G$ is defined as follows. The vertex set  
 $\mathscr V$ of $\mathscr H$ is the disjoint union of two sets $\mathscr V_{1}$ and $\mathscr V_{2}$, 
 where 
$$\mathscr V_{1} := \Bigl\{\, (F, T) \,\big|\,\, F \in \forest(G), T\in \tree(G)\,,\, E(F) \cap E(T) = \emptyset\,\Bigr\},$$ and 
$$\mathscr V_{2} := \Bigl\{\,(T, F) \,\big|\,\, T \in \tree(G), F\in \forest(G)\,,\,E(F) \cap E(T) = \emptyset\,\Bigr\}.$$
There is an edge in $\mathscr E$ connecting $(F, T)\in \mathscr V_{1}$ to $(T', F')\in\mathscr V_{2}$  
if there is an edge $e\in E(T)$ such that $F' = T -e$ and $T'  =  F + e$.
\end{defn}

\begin{defn}
 If $(T,F)$ and $(F',T')$ are adjacent in $\mathscr H$ and $F'=T -e$, we say $(F',T')$ is obtained from 
 $(T,F)$ by \emph{pivoting} involving the edge $e$.
\end{defn}

Our aim in this section is to
describe the connected components of $\mathscr H$. 

\medskip

First note that there is no isolated vertex in $\mathscr H$: consider a spanning tree $T$ and a spanning 2-forest 
$F$ of $G$ with disjoint sets of edges. Let $\mathcal P(F) = \{X,Y\}$, be the vertex sets of the 
two connected components of $F$. By connectivity of $T$, there is an edge $e$ of $T$ which joins a 
vertex of $X$ to a vertex of $Y$. It follows that $T' = F+e$ and $F'= T-e$ are spanning tree and 
2-forest in $G$, respectively, and $(F,T)\in \mathscr V_1$ is connected to 
$(T',F') \in \mathscr V_2$.

\medskip

Let now $\mathscr H_0 = (\mathscr V_0, \mathscr E_0)$ be a connected component of $\mathscr H$. Write 
$\mathscr V_0 = \mathscr V_{0,1} \sqcup \mathscr V_{0,2}$ with $\mathscr V_{0,i} \subset \mathscr V_i$, for $i=1,2$.
Note that both $\mathscr V_{0,1}$ and $\mathscr V_{0,1}$ are non-empty. 
Let $(F,T) \in \mathscr V_{0,i}$. Let $G_0=(V,E_0)$ be the spanning subgraph of $G$ having the edge set
$E_0 = E(T) \cup E(F)$. By definition of the edges in $\mathscr H$, and connectivity of $\mathscr H_0$, we have for all 
$(A, B) \in \mathscr V_0$, $E(A)\cup E(B) = E(G_0)$. We refer to $G_0$ as the spanning subgraph of $G$ associated to the connected component 
$\mathscr H_0$ of $\mathscr H$.

\medskip

\noindent{\textbf{Notation}}. For a subset $X\subset V$ of the vertices of a (multi)graph $G=(V,E)$, we denote by $G[X]$ the induced graph on $X$: it has vertex set $X$ and edge set all the edge of $E$ with both end-points lying both in $X$.

\medskip

\begin{defn}
\rm Let $X\subset V$ be a subset of vertices of $G_0$. We say $X$ is \emph{saturated} 
with respect to $G_0$ if the induced subgraph $G_0[X]$ has precisely $2|X|-2$ edges.

\noindent
A \emph{saturated component} $X$ of $G_0$ is a maximal subset of $G$ for inclusion which is saturated with respect to $G_0$.  
\end{defn}

Let $\mathscr H_0$ be a connected component of $\mathscr H$ with associated 
spanning subgraph $G_0$. 
\begin{lem} Let $X$ be a saturated subset of $G_0$. Then
For all vertices 
$(A, B) \in\mathscr V_0$, $X$ is connected in both $A$ and $B$, i.e.,  the induced graphs 
$A[X]$ and $B[X]$ are disjoint 
trees on the vertex set $X$. 
\end{lem}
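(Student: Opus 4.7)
The plan is a straightforward counting argument using the two facts established just before the lemma: by definition of $\mathscr V_1$ and $\mathscr V_2$ the edge sets $E(A)$ and $E(B)$ are disjoint, and by connectivity of $\mathscr H_0$ one has $E(A)\cup E(B)=E(G_0)$ for every $(A,B)\in\mathscr V_0$. Passing to the induced subgraph on $X$, I would first record the resulting edge-disjoint decomposition
\[
E(G_0[X]) \;=\; E(A[X])\,\sqcup\, E(B[X]),
\]
and invoke saturation to rewrite the left-hand side as $|E(G_0[X])|=2|X|-2$.

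Next I would observe that both $A$ and $B$ are acyclic (each is either a spanning tree or a spanning $2$-forest of $G$), so the induced subgraphs $A[X]$ and $B[X]$ are acyclic as well, hence forests on the vertex set $X$. In particular, each of them has at most $|X|-1$ edges. Combined with the identity $|E(A[X])|+|E(B[X])|=2|X|-2$, this forces
\[
|E(A[X])| \;=\; |E(B[X])| \;=\; |X|-1.
\]
A forest on $|X|$ vertices with exactly $|X|-1$ edges is necessarily connected, and therefore is a spanning tree on $X$. This gives the claim, with edge-disjointness coming for free from the decomposition above.

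There is no real obstacle here: the only thing to make sure of is that "$A[X]$ is a forest" follows just from acyclicity of $A$ (which does not require any consideration of the number of connected components of $A$), and that the saturation hypothesis is used exactly once, to upgrade the inequality $|E(A[X])|+|E(B[X])|\leq 2(|X|-1)$ to equality. After the classification of connected components of $\mathscr H$ in Theorem~\ref{thm:conn}, this lemma is essentially the combinatorial backbone justifying why saturated sets are the right objects to consider: they are precisely the subsets of $V$ on which every representative $(A,B)\in \mathscr V_0$ induces a pair of spanning trees.
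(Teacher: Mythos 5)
Your argument is correct and is essentially the same counting argument the paper gives: both $A[X]$ and $B[X]$ are acyclic forests on $X$, their edge sets partition $E(G_0[X])$ which has $2|X|-2$ edges by saturation, so each must have exactly $|X|-1$ edges and hence be a spanning tree on $X$. You have simply made the bookkeeping explicit where the paper is more terse.
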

\begin{proof}
 Both $A[X]$ and $B[X]$ are free of cycles. Since $G_0[X]$ has precisely $2|X|-2$ edges, and $A[X]$ and $B[X]$ are disjoint, 
 both $A[X]$ and $B[X]$ are trees on vertex set $X$.
\end{proof}

For any edge $e$ of $A$ which lie in $X$, $B+e$ has a cycle. 
Similarly, for any edge $e$ of $B$ which lie in $X$, $A+e$ has a cycle. It follows that pivoting in $G_0$ do not involve any edge in 
$X$, and so, by connectivity of $\mathscr H_0$, for any pair $(A',B') \in \mathscr V_0$, we have 
$A'[X] = A[X]$ and $B'[X] = B[X]$. 
\begin{lem}
 For two different saturated components $X$ and $X'$ of $G_0$, we have  $X \cap X' = \emptyset$.
\end{lem}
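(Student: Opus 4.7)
The plan is to argue by contradiction. Suppose $X \cap X' \neq \emptyset$ but $X \neq X'$. I will show that $X \cup X'$ is then itself saturated with respect to $G_0$, which contradicts the maximality of $X$ (and of $X'$) as saturated components, forcing the claimed disjointness.

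Fix any vertex $(A,B) \in \mathscr V_0$, so that $E(G_0) = E(A) \sqcup E(B)$ as an edge-disjoint union. The preceding lemma, applied separately to the saturated sets $X$ and $X'$, says that each of $A[X]$, $A[X']$, $B[X]$, $B[X']$ is a tree on its respective vertex set. The crucial observation is that $A[X \cup X']$ is then a tree on $X \cup X'$. Acyclicity is free: $A$ is either a spanning tree or a spanning $2$-forest of $G$, hence acyclic, and $A[X \cup X']$ is a subgraph of $A$. For connectivity, pick any $w \in X \cap X'$; every vertex of $X$ is joined to $w$ by a path in $A[X] \subseteq A[X \cup X']$, and every vertex of $X'$ is joined to $w$ by a path in $A[X'] \subseteq A[X \cup X']$, so $A[X \cup X']$ is connected. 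The identical argument, now applied to $B$, shows that $B[X \cup X']$ is a tree on $X \cup X'$.

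Consequently, $A[X \cup X']$ and $B[X \cup X']$ each contribute exactly $|X \cup X'| - 1$ edges. Since $E(G_0) = E(A) \sqcup E(B)$, every edge of $G_0$ with both endpoints in $X \cup X'$ lies in precisely one of $A[X \cup X']$ or $B[X \cup X']$, so $G_0[X \cup X']$ decomposes as their edge-disjoint union and has exactly $2\bigl(|X \cup X'|-1\bigr) = 2|X \cup X'| - 2$ edges. Hence $X \cup X'$ is saturated, and by maximality of $X$ and of $X'$ we conclude $X = X \cup X' = X'$, the desired contradiction.

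The main obstacle is essentially non-existent: the argument reduces to the standard observation that two subtrees of an acyclic graph sharing at least one vertex have a union that is again a subtree on the union of their vertex sets. The only small point to verify carefully is that $E(G_0[X \cup X'])$ is exhausted by $E(A[X \cup X']) \cup E(B[X \cup X'])$, which is immediate from $E(G_0) = E(A) \cup E(B)$ together with the definition of the induced subgraph.
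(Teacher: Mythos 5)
Your proof is correct and takes the same approach as the paper; the paper's one-line argument merely asserts that $X \cup X'$ is connected (hence saturated) when $X \cap X' \neq \emptyset$ and then invokes maximality, whereas you have written out the key verification that $A[X \cup X']$ and $B[X \cup X']$ are trees and hence $G_0[X \cup X']$ has exactly $2|X \cup X'| - 2$ edges, which is precisely what the paper leaves implicit.
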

\begin{proof}
 If $X \cap X' \neq \emptyset$, the set $X \cup X'$ is connected  in  $G_0$. 
 By maximality of $X$ and $X'$, this implies, 
 $X=X'$ which is not possible since $X \neq X'$.
\end{proof}

As a corollary, the saturated components  $X_1, \dots, X_r$ of $G_0$ form a partition of $V$.
In addition, there exist for any $j=1,\dots, r$, two disjoint 
trees $T_{j,1}$ and $T_{j,2}$ with vertex set $X_j$ so that 
for any pair $(A, B) \in \mathscr V_0$, we have $A[X_j] = T_{j,1}$ and $B[X_j] = T_{j,2}$.

\medskip

 We now give another characterization of the saturated components of $G_0$ in terms of
 the connected component $\mathscr H_0$ of $\mathscr H$. 

 Define two equivalence relations 
 $\simeq_1$ and $\simeq_2$ on the set of vertices $V$ of $G_0$ as follows. For two vertices $u, v \in V$,
\begin{itemize}
  \item  we say $u \simeq_1 v$ if for any $(F,T)\in \mathscr V_{0,1}$, both vertices $u$ and $v$ 
 lie in the same connected component of $T \setminus E(\mathcal P(F))$. 
  \end{itemize}
  Similarly, 
  \begin{itemize}
\item we say $u \simeq_2 v$ if for any $(T, F)\in \mathscr V_{0,2}$, 
both vertices $u$ and $v$ lie in the same connected component of $T \setminus E(\mathcal P(F))$. 
 \end{itemize}

\begin{lem}\label{lem:a1}
Let $F$ be a spanning 2-forest in $G_0$. Let $T$ be a spanning tree of $G_0$. Suppose two vertices 
$u, v\in V$ are in two different connected components of  $T \setminus E(\mathcal P(F))$. 
There exists an edge $e\in E(\mathcal P(F)) \,\cap\, E(T)$ such that $u$ and $v$ are not connected in 
$T-e$.
\end{lem}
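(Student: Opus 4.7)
The statement admits a short argument that exploits the unique-path property of a tree. Here is the plan.

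The plan is to pick the edge $e$ on the unique path between $u$ and $v$ in $T$. Concretely, since $T$ is a spanning tree of $G_0$, there is a unique path $P \subset T$ from $u$ to $v$. First I would observe that $P$ cannot be entirely contained in $T \setminus E(\mathcal P(F))$: otherwise $u$ and $v$ would lie in the same connected component of $T \setminus E(\mathcal P(F))$, contradicting the hypothesis. Hence there exists at least one edge $e$ on the path $P$ that belongs to $E(\mathcal P(F))$.

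Next I would verify that this $e$ satisfies the two required properties. By construction $e \in E(T)$ and $e \in E(\mathcal P(F))$, so $e \in E(\mathcal P(F)) \cap E(T)$. Moreover, because $T$ is a tree and $e$ lies on the unique $u$--$v$ path in $T$, removing $e$ disconnects $u$ from $v$: the two components of $T - e$ partition $V$ into the vertex sets reachable from each endpoint of $e$, and since $e$ is on the path $P$, the vertices $u$ and $v$ lie on opposite sides of this cut. This is exactly the conclusion of the lemma.

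There is essentially no obstacle in this proof; the only thing to spell out carefully is the passage from ``some edge of $P$ lies in $E(\mathcal P(F))$'' to ``removing that single edge separates $u$ from $v$ in $T$,'' which is a standard tree-cut property. One could alternatively argue by orienting the path $P$ from $u$ to $v$ and letting $e$ be the first edge of $P$ crossing between the two blocks of $\mathcal P(F)$, but the existence argument above is already the minimal version of this idea.
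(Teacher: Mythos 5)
Your proof is correct and is essentially the same as the paper's: both take the unique $u$--$v$ path in $T$, observe that it must cross $E(\mathcal P(F))$ because $u$ and $v$ lie in different components of $T \setminus E(\mathcal P(F))$, and then use the standard tree-cut property that deleting any edge on this path disconnects $u$ from $v$.
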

\begin{proof}
Denote by $S_u$ and $S_v$ the two connected components of $T \setminus E(\mathcal P(F))$ which contain $u$ and $v$, respectively. There is a path joining $S_u$ to $S_v$ in $T$. Since $S_u \neq S_v$, it contains an edge $e \in E(\mathcal P(F))$. For such an edge $e$, $u$ and $v$ are not connected in $T-e$. 
\end{proof}
The previous lemma allows to prove the following claim.
\begin{claim}\label{claim:a2}
 The two equivalence relations $\simeq_1$ and $\simeq_2$ are the same. 
\end{claim}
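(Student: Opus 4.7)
The plan is to reduce the claim to a single implication by symmetry: I will show $u \not\simeq_1 v \Rightarrow u \not\simeq_2 v$, and obtain the reverse implication by swapping the roles of $\mathscr V_{0,1}$ and $\mathscr V_{0,2}$, since Lemma~\ref{lem:a1} and the pivoting structure are insensitive to this swap. The key move is to transport a witness of $u \not\simeq_1 v$ from $\mathscr V_{0,1}$ into $\mathscr V_{0,2}$ by a single pivot along a carefully chosen edge supplied by Lemma~\ref{lem:a1}.

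More concretely, I start from a pair $(F, T) \in \mathscr V_{0,1}$ in which $u$ and $v$ lie in different connected components of $T \setminus E(\mathcal P(F))$. Lemma~\ref{lem:a1} yields an edge $e \in E(\mathcal P(F)) \cap E(T)$ such that $u$ and $v$ are disconnected in $T - e$. Setting $T' := F + e$ and $F' := T - e$, we obtain a vertex $(T', F') \in \mathscr V_{0,2}$ adjacent to $(F, T)$ in $\mathscr H$, hence lying in the same component $\mathscr H_0$. Writing $\mathcal P(F') = \{S_u, S_v\}$ with $u \in S_u$ and $v \in S_v$, it remains to check that $u$ and $v$ lie in different components of $T' \setminus E(\mathcal P(F'))$; this will certify $u \not\simeq_2 v$.

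I handle this last step by contradiction. Suppose a path $P \subseteq T' = F + e$ joins $u$ to $v$ and avoids $E(\mathcal P(F'))$. Since the endpoints of $e$ belong to $S_u$ and $S_v$ respectively, we have $e \in E(\mathcal P(F'))$, so $P$ cannot use $e$; therefore $P \subseteq F$. Two cases then arise. If $u$ and $v$ lie in different parts of the partition $\mathcal P(F)$, no path in $F$ connects them and we have an immediate contradiction. Otherwise $u$ and $v$ lie in the same tree-component of $F$, and the unique $F$-path from $u$ to $v$---which must be $P$---is forced to leave $S_u$ and enter $S_v$, hence traverses an edge of $E(\mathcal P(F'))$, contradicting the hypothesis on $P$.

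The main technical point I expect to worry about is recording that $e \in E(\mathcal P(F'))$ cleanly, together with ensuring the case split on whether $u, v$ belong to the same tree-component of $F$ is exhaustive; once these are in place, the argument is essentially combinatorial bookkeeping, and the symmetric implication follows by relabeling.
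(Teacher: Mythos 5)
Your proof is correct and follows essentially the same route as the paper. You prove the contrapositive implication $u \not\simeq_1 v \Rightarrow u \not\simeq_2 v$ by starting at a witness $(F,T) \in \mathscr V_{0,1}$, invoking Lemma~\ref{lem:a1} for the pivot edge $e$, and observing that after pivoting to $(T',F') = (F+e, T-e) \in \mathscr V_{0,2}$ the vertices $u,v$ fall into different parts of $\mathcal P(F')$; the paper does the mirror image (starting from $\mathscr V_{0,2}$ and pivoting into $\mathscr V_{0,1}$, then closing by contradiction), which is the same argument under the $\mathscr V_{0,1} \leftrightarrow \mathscr V_{0,2}$ symmetry you both invoke for the other half. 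The only stylistic point: your final case split is more elaborate than necessary, since once $u$ and $v$ lie in different parts of $\mathcal P(F')$, no path in any graph can join them without using an edge of $E(\mathcal P(F'))$, so they are automatically in different components of $T' \setminus E(\mathcal P(F'))$ with no cases to consider.
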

\begin{proof} Suppose for the sake of a 
contradiction that this is not the case.  By symmetry, let $u,v\in V$ be two vertices with  
$u\simeq_1 v$ but $u\not\simeq_2 v$. This implies the existence of 
$(T, F) \in \mathscr  V_{0,2}$ such that $u,v$ belong to two different connected components of 
$T \setminus E(\mathcal P(F))$. Applying the previous lemma, we infer the existence of an edge 
$e\in E(T)\,\cap\, E(\mathcal P(F))$ such that $u$ and $v$ are not connected in $T-e$. 
Pivoting involving $e$ gives a pair $(F', T')$ such that $u$ and $v$ lie in two different connected 
components of $F'$. In particular, it follows that $u \not\simeq_1 v$, which is a contradiction.  
This proves the claim.
\end{proof}
We denote by $\simeq$ the equivalence relation on vertices induced by $\simeq_i$.
We have actually proved the following
\begin{prop}\label{rem:uf1}\rm W properties are equivalent for $u,v\in V$:

 \noindent (1) we have $u\not \simeq v$.
 
 \noindent (2) there exists $(F, T) \in  \mathscr V_{0,1}$ 
such that $u$ and $v$ lie in different connected components of $F$.

\noindent (3) there exists 
$(T', F') \in \mathscr V_{0,2}$ such that $u,v$ lie in two different connected components of $F'$.  
\end{prop}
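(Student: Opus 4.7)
The plan is to deduce the three equivalences from Claim~\ref{claim:a2} (which identifies $\simeq_1$, $\simeq_2$, and $\simeq$) together with Lemma~\ref{lem:a1} and the pivoting mechanism of Definition~\ref{def:exchange}. The implications (2) $\Rightarrow$ (1) and (3) $\Rightarrow$ (1) will fall out immediately from the definitions of $\simeq_1$ and $\simeq_2$, while (1) $\Rightarrow$ (2) and (1) $\Rightarrow$ (3) are essentially the pivoting construction already deployed in the proof of Claim~\ref{claim:a2}.

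For (2) $\Rightarrow$ (1), I would argue as follows. If $(F,T) \in \mathscr V_{0,1}$ places $u$ and $v$ in different components of $F$, then writing $\mathcal P(F) = \{X,Y\}$, we may assume $u \in X$ and $v \in Y$. The key observation is that every connected component of $T \setminus E(\mathcal P(F))$ is entirely contained in $X$ or entirely in $Y$, since any path from $X$ to $Y$ in $T$ must use an edge of $E(\mathcal P(F))$, which has been removed. Hence $u$ and $v$ lie in distinct components of $T \setminus E(\mathcal P(F))$, witnessing $u \not\simeq_1 v$, i.e., $u \not\simeq v$. The implication (3) $\Rightarrow$ (1) is the same argument with $\simeq_2$ in place of $\simeq_1$.

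For (1) $\Rightarrow$ (2): given $u \not\simeq v$, Claim~\ref{claim:a2} yields $u \not\simeq_2 v$, so there exists $(T,F) \in \mathscr V_{0,2}$ with $u,v$ in distinct components of $T \setminus E(\mathcal P(F))$. Lemma~\ref{lem:a1} then supplies an edge $e \in E(T) \cap E(\mathcal P(F))$ whose removal disconnects $u$ from $v$ in $T$. Because $e$ crosses between the two components of $F$, the subgraph $F + e$ is a spanning tree, so by Definition~\ref{def:exchange} the pair $(T,F)$ is adjacent in $\mathscr H$ to $(F',T') := (T-e,\,F+e)$, which then lies in $\mathscr V_{0,1}$ by connectedness of $\mathscr H_0$. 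Since $u$ and $v$ are disconnected in $F' = T-e$, they belong to distinct components of the spanning $2$-forest $F'$, giving (2). The implication (1) $\Rightarrow$ (3) is entirely symmetric: starting from $u \not\simeq_1 v$, I would pick a witness $(F,T) \in \mathscr V_{0,1}$, apply Lemma~\ref{lem:a1} to obtain a suitable edge $e$, and pivot to $(T',F') \in \mathscr V_{0,2}$ with $F' = T-e$ separating $u$ from $v$. No step looks genuinely hard: the entire content of the proposition is already packaged in Claim~\ref{claim:a2} and Lemma~\ref{lem:a1}, and only careful bookkeeping about which side of the bipartition $\mathscr V_1 \sqcup \mathscr V_2$ a given pivot lands in is required.
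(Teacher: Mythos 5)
Your proof is correct and matches the route the paper intends: the paper presents the proposition with the remark ``we have actually proved the following'' immediately after Claim~\ref{claim:a2}, and your argument makes explicit precisely the steps that are buried in the proof of that claim (the pivot supplied by Lemma~\ref{lem:a1} gives (1) $\Rightarrow$ (2) and (1) $\Rightarrow$ (3), and the observation that the components of $T \setminus E(\mathcal P(F))$ refine the bipartition $\mathcal P(F)$ gives (2), (3) $\Rightarrow$ (1)). The bookkeeping about which side of $\mathscr V_1 \sqcup \mathscr V_2$ the pivot lands in and the use of connectivity of $\mathscr H_0$ to stay inside $\mathscr V_0$ are both handled correctly.
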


 Denote by $\mathcal P_{\simeq}$ the partition of $V$ induced by the equivalence classes of $\simeq$. 
 
 \begin{prop}\label{prop:cle}
  The partition $\mathcal P_{\simeq}$ coincides with the partition of $V$ into saturated components of $G_0$.
 \end{prop}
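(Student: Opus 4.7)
The plan is to prove the two inclusions of partitions separately. The easy inclusion ``saturated components $\subseteq$ $\simeq$-classes'' follows immediately from the preceding lemma: if $u,v$ both lie in a saturated component $X_j$, then $A[X_j]$ and $B[X_j]$ are trees on $X_j$ for every $(A,B)\in\mathscr V_0$, so $X_j$ lies in a single connected component of the $2$-forest in every pair of the component, and Proposition~\ref{rem:uf1} then gives $u\simeq v$.

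For the reverse inclusion, suppose $u\in X_i$ and $v\in X_j$ with $i\neq j$; by Proposition~\ref{rem:uf1} it suffices to produce $(F,T)\in\mathscr V_{0,1}$ in which $u,v$ lie in distinct components of $F$. I would first reduce to the case where every saturated component is a singleton, by contracting each $X_k$ to a vertex $x_k$. Since the intra-$X_k$ pieces $T_{k,1},T_{k,2}$ of every pair are fixed across $\mathscr H_0$ and no pivot ever involves an edge inside any $X_k$, the component $\mathscr H_0$ descends bijectively onto a connected component of the exchange graph of the contracted graph $\bar G_0$, and separating $u$ from $v$ in $G_0$ is equivalent to separating $x_i$ from $x_j$ in $\bar G_0$. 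Moreover, the saturation hypothesis becomes the sparsity bound $|\bar G_0[S]|\leq 2|S|-3$ for every $S\subseteq\bar V$ with $|S|\geq 2$.

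Take any pair $(F,T)$ in the reduced component and let $\pi$ be the path in the tree $T$ from $x_i$ to $x_j$. If some edge $e$ of $\pi$ lies in $E(\mathcal P(F))$, a single pivot on $e$ produces the pair $(F+e,T-e)$ whose $2$-forest separates $x_i$ from $x_j$, and the proof is complete in that case. The substantive case is when no edge of $\pi$ crosses $\mathcal P(F)$, which forces $\pi$ to lie in the vertex set $V_1$ of the $F$-component containing $x_i$ and $x_j$. Then $\bar G_0[V_1]$ contains both the spanning tree $F[V_1]$ on $V_1$ and the edge-disjoint path $\pi$, so the sparsity bound gives $|T[V_1]|\leq|V_1|-2$; in particular $T[V_1]$ is disconnected and $T$ uses at least two edges of $E(\mathcal P(F))$.

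The hard part will be to leverage this structural information and perform a sequence of pivots eventually reducing to the easy case above. The plan is to set up an induction using one of the available cross-edges to modify the pair in a controlled way and shrink a monovariant measuring how ``entangled'' $x_i$ and $x_j$ are in the current pair---such as the size of their common side of $\mathcal P(F)$, or the length of the $T$-path between them. The main obstacle is identifying the right monovariant and showing that it strictly decreases under a suitable pivot, since pivoting changes both the tree and the $2$-forest in a coupled way. The sparsity bound $|\bar G_0[V_1]|\leq 2|V_1|-3$ together with the disconnectedness of $T[V_1]$ is what provides just enough room to find such a pivot; the recursion's base case forces the $T$-path between $x_i$ and $x_j$ to reduce to a single edge, which must then lie in $E(\mathcal P(F))$ (otherwise it would have to coincide with the unique edge of $F[V_1]$, violating $E(F)\cap E(T)=\emptyset$), triggering the easy case.
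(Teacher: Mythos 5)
Your proof of the easy inclusion (saturated components refine $\simeq$-classes) is correct and matches the paper. The reverse inclusion, however, is not a proof but an announced plan: you write explicitly ``The hard part will be to leverage this structural information and perform a sequence of pivots'' and ``The main obstacle is identifying the right monovariant and showing that it strictly decreases under a suitable pivot.'' That unresolved step is precisely the content of the reverse inclusion, so the argument is incomplete. The reduction to the contracted graph $\bar G_0$ and the sparsity bound $|E(\bar G_0[S])|\leq 2|S|-3$ for $|S|\geq 2$ are correct, as is the observation that $T[V_1]$ is disconnected, forcing $T$ to use at least two edges of $E(\mathcal P(F))$; but none of that yields a strictly decreasing monovariant, and indeed a single pivot can lengthen the $T$-path between $x_i$ and $x_j$ or enlarge their common $\mathcal P(F)$-block, so neither of your suggested monovariants obviously works.

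The paper takes a different and more economical route. Instead of searching for a separating pair $(F,T)$ starting from nothing, it fixes one $(T_0,F_0)\in\mathscr V_0$ and the $T_0$-path $P$ between two $\simeq$-equivalent vertices $a,b$, and propagates a \emph{local invariant} along $\mathscr H_0$: by induction on distance from $(T_0,F_0)$ one has $P\subseteq T$ and $P\cap E(\mathcal P(F))=\emptyset$ for every $(T,F)\in\mathscr V_0$. The second property ensures no pivot ever deletes an edge of $P$ (giving the first property for the neighbor), and the first property, combined with $a\simeq b$ via Proposition~\ref{rem:uf1}, gives the second for the neighbor. Hence every vertex of $P$ is $\simeq a$, so $T_0[X]$ is connected, and symmetrically $F_0[X]$ is connected; since $E(T_0)\cap E(F_0)=\emptyset$, the set $X$ is saturated. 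This sidesteps exactly the difficulty you ran into: one never needs to exhibit a pivot sequence reaching a separating pair, only to show that the $\simeq$-class is preserved along all pivots. Your approach, if completed, would essentially reprove a reachability statement akin to Theorem~\ref{thm:conn}; but Proposition~\ref{prop:cle} is used \emph{in} the proof of Theorem~\ref{thm:conn}, so the paper deliberately proves it by the lighter invariance argument. I'd suggest abandoning the monovariant search and instead proving the two invariants $P\subseteq T$ and $P\cap E(\mathcal P(F))=\emptyset$ by induction along $\mathscr H_0$.
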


 \begin{proof} 
Any two vertices in a saturated component of $G_0$ are clearly equivalent. Thus, in order to prove the proposition, it will be enough to show that each element in $\mathcal P_{\simeq}$ saturated with respect to $G_0$. Let $X \subset V$ be an element of $\mathcal P_{\simeq}$, and consider two vertices $a, b\in X$.  Let $(T_0,F_0)\in \mathscr V_0$ be  a vertex of $\mathscr H_0$, and let $P$ be the unique 
path in $T_0$ joining $a$ and $b$. We claim that $P$ is contained in $X$. To see this, first note that there is no edge $e\in E(\mathcal P(F))$ in the path $P$: otherwise, $a$ and $b$ would lie in two different connected components of the 2-forest $T_0-e$, which is not possible by Proposition~\ref{rem:uf1}. By definition of the edges in $\mathscr H$, and by connectivity of $\mathscr H_0$, this shows that for any $(T, F) \in \mathscr V_0$, we have $P$ is included in $T$. By the definition of the equivalence relation $\simeq$, we infer that $X$ contains the path $P$. This shows that $T_0[X]$ is connected. Similarly, the induced graph $F_0[X]$ is connected. Since $E(F_0) \cap E(T_0) =\emptyset$, we infer that $X$ is a saturated set with respect to $G_0$.
   \end{proof}

We can now state the main result of this section.
\begin{thm}\label{thm:conn} 
Let $G$ be a multigraph.
\begin{itemize}
\item[(1)] The graph $\mathscr H$ is connected if and only if the following two conditions hold:
\begin{itemize}
\item[(i)] the edge set of $G$ can be partitioned as $E(G) = E(T) \sqcup E(F)$ for 
a spanning tree $T$ and a spanning 2-forest $F$ of $G$.
\item[(ii)] any non-empty subset $X$ of $V$ saturated with respect to $G_0$ consists of a single vertex. 
\end{itemize}

 \item[(2)] More generally, there is a bijection between the connected components $\mathscr H_0$ of $\mathscr H$ 
 and the pair $(G_0; \{T_{1,1},T_{1,2}, \dots, T_{r,1}, T_{r,2}\})$ where
 \begin{itemize}
  \item[(i)] $G_0$ is a spanning subgraph of $G$ which is 
  a disjoint union of a spanning tree $T$ and a spanning forest $F$ of $G$.
  \item[(ii)] denoting the maximal subsets of $V$ saturated with respect to $G_0$ by 
  $X_1, \dots, X_r$, then $T_{j,1}$ and $T_{j_2}$ are two disjoint spanning trees on the vertex set $X_j$, 
  and $E(G_0[X]) = E(T_{j,1}) \sqcup E(T_{j,2})$, for $j=1,\dots, r$.
 \end{itemize}
 Under this correspondence, the vertex set of $\mathscr H_0$ consists of all the vertices $(A,B) \in \mathscr V$ which verify
 $E(A) \cup E(B) = E(G_0)$, and for all $j=1,\dots, r$, $A[X_j] =T_{j,1} $ and $B[X_j] = T_{j,2}$.
\end{itemize}
\end{thm}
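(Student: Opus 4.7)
\emph{Plan.} I would derive part~(1) as a special case of part~(2), so the main task is to establish the bijection in~(2). The forward map, assigning to a component $\mathscr H_0$ the data $(G_0;\{T_{j,1},T_{j,2}\}_{j=1}^{r})$, is essentially already constructed by the preceding results: $G_0$ is the well-defined spanning subgraph with $E(G_0)=E(A)\cup E(B)=E(T)\sqcup E(F)$ for any $(A,B)\in\mathscr V_0$ (yielding condition~(i)); by Proposition~\ref{prop:cle}, the maximal saturated subsets are the $\simeq$-classes $X_1,\dots,X_r$; and the lemma that each saturated $X_j$ yields disjoint spanning trees $A[X_j],B[X_j]$, together with the observation that any pivot only involves cross edges, shows that the restrictions $T_{j,1}=A[X_j]$ and $T_{j,2}=B[X_j]$ are constant on $\mathscr V_0$, producing the data in~(ii).

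For the converse direction I would first exhibit at least one vertex realizing any valid data. Contract each $X_j$ to a single point in $G_0$ to obtain a multigraph $\widetilde{G_0}$ on $r$ vertices with $2r-3$ edges. For any tree-plus-2-forest decomposition $G_0=T\sqcup F$, the edge count $|T[X_j]|+|F[X_j]|=2|X_j|-2$ forces $|T[X_j]|=|F[X_j]|=|X_j|-1$, so both $T[X_j]$ and $F[X_j]$ are connected spanning trees on $X_j$; contracting them produces a spanning tree $\widetilde T_c$ and a spanning 2-forest $\widetilde F_c$ of $\widetilde{G_0}$. Gluing $\widetilde T_c$ to $\bigsqcup_{j}T_{j,1}$ and $\widetilde F_c$ to $\bigsqcup_{j}T_{j,2}$ yields a spanning tree $A$ and a spanning 2-forest $B$ of $G$ with $(A,B)$ realizing the prescribed data.

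The main obstacle is the remaining step: any two vertices $(A_1,B_1),(A_2,B_2)\in\mathscr V$ with the \emph{same} data lie in the same component of $\mathscr H$. They agree on each $X_j$, so they differ only in how the $2r-3$ cross edges are split between tree and 2-forest; after the contraction this reduces to the case where all saturated subsets of $\widetilde{G_0}$ are singletons, i.e.~$\widetilde{G_0}$ is $(2,3)$-tight (a Laman multigraph). I would proceed by induction on $|E(A_1)\triangle E(A_2)|$: when this quantity is positive, I would exhibit $e_1\in E(A_1)\setminus E(A_2)$ and $e_2\in E(A_2)\setminus E(A_1)$ for which the two successive pivots realizing the ``double swap'' $(A_1,B_1)\mapsto(B_1+e_1,A_1-e_1)\mapsto(A_1-e_1+e_2,B_1+e_1-e_2)$ are both valid, i.e.~$e_1\in E(\mathcal P(B_1))$ and $e_2\in E(\mathcal P(A_1-e_1))$, and strictly decrease the symmetric difference with $(A_2,B_2)$. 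The existence of such an admissible pair $(e_1,e_2)$ is the combinatorial heart of the proof: an obstruction would produce an induced subset of $\widetilde{G_0}$ carrying too many edges relative to its vertex count, contradicting the Laman hypothesis and the maximality of the $X_j$'s.

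Part~(1) is then deduced from~(2) by noting that $\mathscr H$ is connected iff the classifying data is unique. Condition~(i) is equivalent to $G_0=G$ for every component (forced by $|E(A)\cup E(B)|=2|V|-3$), while condition~(ii) is equivalent to uniqueness of the local data: if some $X_j$ had size $\geq 2$, the two orderings of the disjoint-tree pair $(T_{j,1},T_{j,2})$ would, by the bijection just established, correspond to two distinct components of $\mathscr H$.
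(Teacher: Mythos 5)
The proposal correctly identifies the structure of the argument (forward map via the preceding lemmas; existence of a vertex realizing any prescribed data by gluing after contraction; the remaining step being connectivity within a class), but the central step is left as an unproved assertion, and the gesture offered for it is not enough.

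You write that given $(A_1,B_1)$ and $(A_2,B_2)$ with the same data one can find an ``admissible'' pair $(e_1,e_2)$ for which a double pivot is valid and strictly decreases the symmetric difference, and that ``an obstruction would produce an induced subset of $\widetilde{G_0}$ carrying too many edges relative to its vertex count.'' This is precisely the point where the real work lies, and it is not true that the obstruction produces a dense subset in a single counting step. The admissibility of the first pivot requires $e_1\in E(A_1)\setminus E(A_2)$ to cross the bipartition $\mathcal P(B_1)$, and the paper's Claim~(I) shows that under the contradiction hypothesis \emph{no} such $e_1$ exists. Rather than yielding a dense subset directly, this forces, through a chain of structural deductions (Claims~(II)--(V): equality of the 2-forest partitions $\mathcal P(F)=\mathcal P(F')$, equality of the refined partitions $\mathcal P_X=\mathcal P'_X$, matching cross-edge sets), that the edge $e_\star\in E(T')\setminus E(T)$ persists in \emph{every} symmetric difference reachable by pivoting in either component; only after iterating over all pivot sequences and invoking Proposition~\ref{prop:cle} (identifying $\mathcal P_\simeq$ with the saturated-component partition) does one conclude that $e_\star$'s endpoints lie in a saturated component of size $\geq 2$. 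This is roughly a page of argument in the paper and is the theorem's actual content; your sketch assumes its conclusion.

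A smaller structural difference worth flagging: the paper proves part~(1) directly (including the non-connectivity direction by a swap argument on the $T_{j,1}\leftrightarrow T_{j,2}$ data) and then derives~(2) from~(1) by contracting saturated components. You invert this, deriving~(1) from~(2). That inversion is harmless in itself, but it means you cannot appeal to any special simplicity of the singleton-saturated case to avoid the argument above; on the contrary, your plan pushes all the difficulty into exactly that case. As it stands, the proposal restates the main claim to be proved as though it followed from a parity/count heuristic, so there is a genuine gap.
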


Before giving the proof of this theorem, we make the following remark.
\begin{remark}  Let $G$ be a graph whose edge set is a disjoint union of the edges of a spanning tree 
and a spanning 2-forest, and with the property that there is no saturated subset of size larger than two. 
The graph $G$ might contain spanning trees $T$ with the property that $G \setminus E(T)$ is not a spanning 2-forest. In a sense, Theorem~\ref{thm:main1} 
concerns smaller number of spanning trees of $G$, and the theorem does not seem to follow from 
the well-known connectivity of 
edge-exchanges for spanning trees.   
\end{remark}

\begin{figure}[h]
\includegraphics[width=7cm]{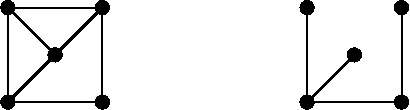}
\caption{Example of a graph $G$, on the left, which is a disjoint union of a spanning tree and a spanning 
2-forest, in which all saturated components are singletons. Note that $G$ contains a 
spanning tree $T$, given on the right,  with a complement which is not a spanning 2-forest.}
\end{figure}

The rest of this section is devoted to the proof of this theorem.

\medskip

To prove part $(1)$ of the theorem,  suppose $\mathscr H$ is connected (and so non-empty). Then (i) obviously holds. 
To prove (ii), let $X_1, \dots, X_r$ be all the different maximal subsets of vertices which are 
saturated with respect to $V$, and assume for the sake of a contradiction, and without loss of generality that 
$|X_1|>1$. Let $T_{j,1}, T_{j,2}$ be the trees on vertex set $X_j$ associated to $\mathcal H$. 
For any $(A, B) \in \mathcal V$ let $(A',B')$ be defined by 
$A' = A  - E(T_{1,1})+ E(T_{1,2})$ and $B' = B  - E(T_{1,2})+ E(T_{1,1})$. Since pivoting only involves edges which are neither in 
$T_{1,1}$ nor in $T_{1,2}$, this shows that $(A',B')$ is not a vertex of $\mathscr V$. This is a contradiction, since $A'$ and $B'$ have the same number of edges as 
$A$ and $B$, respectively, both are without cycles, and $E(G_0) = E(A') \cup E(B')$.

\medskip

We now prove the other direction. Suppose both (i) and (ii) in (1) hold.  Since any vertex $(F, T)$ in $\mathscr V_{1}$ is connected to a vertex of 
$\mathscr V_{2}$, it will be enough to prove that any two vertices 
$(T, F), (T', F') \in \mathscr V_{2}$ are connected by a path in $\mathscr H$.

We prove this proceeding by induction on the integer number 
$$r=\mathrm{diff}(T, T'):=\bigl|E(T) \setminus E(T') \bigr|.$$

\noindent $\bullet$ If  $r  = 0$, then $T = T'$, and  the claim trivially holds. 

\noindent $\bullet$ Assuming the assertion for $r$, we prove it for $r+1$. 
So let $\mathfrak v =(T, F)$, $\mathfrak v'=(T', F') \in \mathscr V_{2}$ be two vertices  with $\bigl|\,E(T) \setminus E(T') \, \bigr| = r+1$. 
For the sake of a contradiction, assume that $\mathfrak v$ and $\mathfrak v'$  (Note that this is )
are not connected in 
$\mathscr H$. Denote by $\mathscr H_0$ the connected component of $\mathscr H$ which contains $\mathfrak v$.

\medskip

We claim
 \begin{itemize}
 \item[(I)] \emph{There is no edge $e$ in $E(T) \setminus E(T')$ with $F+e \in \tree(G)$. 
 Similarly, there is no  edge $e$ in $E(T') \setminus E(T)$ with $F + e \in \tree(G)$.}
\end{itemize}

Otherwise, suppose $e \in E(T) \setminus E(T')$ is such that $F+e$ is a spanning tree of $G$. 
There exists $e' \in E(T ' ) \setminus E(T)$ such that 
$T'' = T - e + e'$ is a spanning tree of $G$. This follows from the exchange properties for the spanning trees of $G$. 
(Spanning trees of $G$ form the basis of the graphic matroid on the ground set $E$.) 
The complement $T''$ in $G$ is $F'':= F + e - e$. Since  $F +e$ is 
a spanning tree of $G$, and $e'\in F$, the subgraph $F''$ is a spanning 2-forest of $G$, and thus 
$\mathfrak v''=(T'', F'') \in \mathscr V_{2}$.  
By definition, $(T, F)$ and 
$(F + e, T - e)$  are adjacent in $\mathscr H$. Moreover, 
$(F +e, T-e)$ and $\mathfrak v''$ are adjacent in $\mathscr H$. 
Since $\mathrm{diff}(T'', T') =r$,  by hypothesis of the induction, $(T'', F'')$ and 
$(T',F')$ are connected by a path in 
$\mathscr H$. Thus   $(T, F)$ and $(T',F')$ are connected, which is a contradiction to the
assumption we made. This proves our claim (I).  

\medskip

As a consequence of (I) we now show that
\begin{itemize}
 \item[(II)] \emph{We have $F \sim_v F'$, i.e., the two partitions $\mathcal P(F)$ and $\mathcal P(F')$ 
 of $V$ coincide.}
\end{itemize}

\medskip

Let $\mathcal P(F) = \{X, Y\}$ and $\mathcal P(F') = \{X', Y'\}$, and 
suppose for the sake of a contradiction that the two partitions are not equal. The partition 
$\mathcal P(F)$ (resp. $\mathcal P(F')$) induces a partition of both $X'$ and $Y'$ (resp. $X$ and $Y$). 
One of these four induced partitions has to be non-trivial: by this we mean that, 
without loss of generality, 
we can assume that
 $Z:=X \cap X'$ and $W:=X \cap Y'$ are both non-empty. Since $F[X]$ is connected, there is an edge $e=\{u,v\} \in F$ with $u\in Z$ and $v\in W$. 
This edge does not belong to 
$F'$ since it joins a vertex in $X'$ to a vertex in $Y'$. Therefore,  $e\in T'$, and 
thus $e\in E(T')\setminus E(F)$. Moreover, $F' +e$ is a spanning tree, which is a contradiction to (I). This proves our claim (II).

\medskip

Let $\mathcal P(F) = \mathcal P(F') = \{X, Y\}$.  Denote by $\mathcal P_X$ the partition of $X$ given by the vertex sets of the connected 
components of $T[X]$. Also, denote by $\mathcal P'_X$ the partition of $X$ induced by the connected components of $T'[X]$. Similarly, define $\mathcal P_Y$ and $\mathcal P_Y'$. Let $E(\mathcal P_X)$ (resp. $E(\mathcal P_Y)$) 
be the set of all edges $e$ of $G$ with end-points in two different members of 
$\mathcal P_X$ (resp. $\mathcal P_Y$), 
respectively. 
Similarly, define $E(\mathcal P_X')$ and $E(\mathcal P_Y')$.

\medskip

We now claim.
\begin{itemize}
 \item[(III)] \emph{The intersections $E(T') \cap E(\mathcal P_X)$, 
 $E(T') \cap E(\mathcal P_Y)$, $E(T) \cap E(\mathcal P_X')$, $E(T) \cap E(\mathcal P_Y')$ are all empty.}
\end{itemize}

Otherwise, without loss of generality, suppose there is an edge $e' \in T'$ 
with $e'\in E(\mathcal P_X)$. Since $e'$ joins two different connected components of $T[X]$, we have 
$e'\in F$. The graph $T + e'$ has a cycle, which, once again since $e'$ joins two different connected components of $T[X]$, must include an edge $e\in E(\mathcal P(F))$. Since $\mathcal P(F) = \mathcal P(F')$, we have $e\in E(T')$. 

\noindent Let $\mathfrak v_1 = (F_1, T_1) $ with 
 $F_1 = T-e$ and $T_1 = F+e$, and 
$\mathfrak v_2 = (T_2, F_2)$ with $T_2 = F_1 +e'$ and $F_2 = T_1 - e'$.
By choices of $e$ and $e'$, both $\mathfrak v_1$ and $\mathfrak v_2$ are vertices in $\mathscr H$, 
and $\mathfrak v, \mathfrak v_1, \mathfrak v_2$ forms a path of length two.  We have 
$\mathrm{diff}(T_2, T') = \mathrm{diff}(T, T') =r+1$. 

Since $F_2$ contains $e$, the edge $e$ lies entirely in one of the two connected components of $F_2$, 
and so $\mathcal P(F_2) \neq \mathcal P(F)$. Since by our assumption, 
$\mathfrak v$ and $\mathfrak v'$ are not connected in $\mathscr H$, $\mathfrak v''$ and 
$\mathfrak v'$ are not connected in $\mathscr H$. Applying the above reasoning to $\mathfrak v''$ and 
$\mathfrak v'$, we must have by (II) that $\mathcal P(F_2) = \mathcal P(F')$. 
Since $\mathcal P(F') = \mathcal P(F)$, this gives a contradiction. This proves our claim (III).

\medskip

As an immediate corollary of (III), we get

\begin{itemize}
\item[(IV)] \emph{We have $\mathcal P_X =\mathcal P'_X$ and $\mathcal P_Y =\mathcal P'_Y$}.
\end{itemize}
\medskip

Indeed, since  $E(T')\cap E(\mathcal P_X) =\emptyset$, any subset $Z'$ of $X$ with $T'[Z']$ connected, should be entirely included in an element of $\mathcal P_X$. This in particular, when applied to
each $Z' \in \mathcal P_X'$, shows the existence of $Z \in \mathcal P_X$ with $Z' \subseteq Z$, which shows that $\mathcal P_X'$ is a refinement of $\mathcal P_X$.
 By symmetry, we get that 
 $\mathcal P_X$ is a refinement of $\mathcal P_X'$. Thus, $\mathcal P_X = \mathcal P'_X$. The equality $\mathcal P_Y = \mathcal P'_Y$ follows similarly.

\medskip

As a corollary, we get
\begin{itemize}
\item[(V)] \emph{The equality $E(\mathcal P_X) \sqcup E(\mathcal P_Y) = 
E(\mathcal P_X') \sqcup E(\mathcal P_Y')$ holds}
\end{itemize}

\medskip
To see this, note that by (III), $E(\mathcal P_X) \sqcup E(\mathcal P_Y) $ and $  
E(\mathcal P_X') \sqcup E(\mathcal P_Y')$ are subsets of $E(F)\cap E(F')$, we thus get the equality of the two sets from (IV).

\medskip

By the definition of $\mathscr H$, all the vertices $\mathfrak v_2 = (T_2, F_2)$ of 
$\mathscr H$ at distance 2 from $(T, F)$ are of the form 
$T_2 =  T -e_1 + e_2$ and $F_2 = F -e_1 + e_2$ for $e_1\in E(\mathcal P(F))$ and 
$e_2 \in E(\mathcal P_X)\sqcup E(\mathcal P_Y)$.
Indeed, $F + e_1 \in \tree(G)$ implies $e_1\in E(\mathcal P(F))$. Similarly, $T- e_1 +e_2 \in \tree(G)$ implies $e_2 \in E(\mathcal P_X)\sqcup E(\mathcal P_Y)$.

By (II), $E(P(F)) = E(\mathcal P(F'))$, and  by (V), 
$E(\mathcal P_X)\sqcup E(\mathcal P_Y) = E(\mathcal P_X')\sqcup E(\mathcal P_Y')$. Thus, for such a vertex $\mathfrak v_2$, the pair
 $\mathfrak v_2' = (T_2',F_2')$ defined by
$T_2' = T' - e_1 + e_2$ and $F_2' = F' + e_1 - e_2$ is also a vertex of $\mathscr H$ 
at distance two from $\mathfrak v'$. In addition, we have 
$\mathrm{diff}(T_2, T'_2) = \mathrm{diff}(T, T') = r+1$.

Since by our assumption, $\mathfrak v$ and $\mathfrak v'$ are not 
connected in $\mathscr H$, any pair of vertices $\mathfrak v_2$ and $\mathfrak v'_2$ 
as above are not connected in $\mathscr H$. 

Since $T\neq T'$, there is an edge $e_{\star} \in E(T')\setminus E(T)$.  For any choice of $e_1, e_2$ as 
above, we have $e_{\star}\neq e_1, e_2$, and thus we must have $e_{\star}\in E(T_{2}') \setminus E(T_2)$. 
\medskip

Applying the same reasoning to the pair $\mathfrak v_2$ and $\mathfrak v'_2$, and proceeding inductively on $k$, we infer that for any 
vertex $\mathfrak v_{2k} = (T_{2k}, F_{2k})$ of $\mathscr H$ 
obtained from $(T, F)$ by an ordered sequence of pivoting involving edges $e_1, e_2, \dots, 
e_{2k-1}, e_{2k}$,  the pair $\mathfrak v_{2k}' = (T_{2k}', F_{2k}')$ obtained from $\mathfrak v'$ 
by pivoting involving the same ordered sequence of edges $e_1, e_2, \dots, e_{2k-1}, e_{2k}$ is a 
vertex of $\mathscr H$, and we have by (I)-(V):
\begin{itemize}
 \item $\mathcal P(F_{2k}) = \mathcal P(F_{2k}') = \{X_{2k}, Y_{2k}\}$ (with $X_{2k}$ and $Y_{2k} $ depending on the sequence of edges $e_1, \dots, e_{2k}$), 
\item $E(\mathcal P_{X_{2k}}) \sqcup E(\mathcal P_{Y_{2k}})= E(\mathcal P_{X_{2k}}') \sqcup E(\mathcal P_{Y_{2k}}')$.
\item $\mathrm{diff}(\mathfrak v_{2k}, \mathfrak v_{2k}')=r+1$,  and $\mathfrak v_{2k}$ and $\mathfrak v_{2k}'$ are not connected in $\mathscr H$.
\item $e_* \in E(T_{2k}') \setminus E(T_{2k})$
\end{itemize}

To get a contradiction, note that all the vertices of $\mathscr H$ appear among the set of vertices $\mathfrak v_{2k}$, 
and we have $e_* \in E(T'_{2k}) \setminus E(T_{2k}) \subset E(F_{2k})$. It follows that the two end-points 
of $e_*$ are in the same equivalence class of $\simeq$. Since $\mathcal P_{\simeq}$ coincides with the partition of 
$V$ into saturated components of $G_0$, this leads to a contradiction to the assumption that all the saturated components are singletons. 
This final contradiction proves the step $r+1$ of our induction and finishes the proof of the first part of our theorem. 

\medskip

Part (2) follows directly from part (1):  contract all the edges lying in a saturated component in 
$G_0$ in order to get the graph $\widetilde G_0$. 
One verifies that in $\widetilde G_0$, all the saturated components are singleton, and the edges of $\widetilde G_0$ are 
a disjoint union of the edges of a spanning tree and a spanning 2-forest. Thus by part (1), the graph 
$\mathscr H_{\widetilde G_0}$ is connected. There is an isomorphism from $\mathscr H_0$ to $\mathscr H_{\widetilde G_0}$ which sends 
a pair $(A,B)$ in $\mathscr V_0$ to the pair $(\widetilde A, \widetilde B)$ in   $\mathscr H_{\widetilde G_0}$ obtained by contracting all the edges 
in the trees $T_{j,1},T_{j,2}$, for $j=1,\dots, r$.

\section{Proof of Theorem~\ref{claim:aux}} \label{sec:proof}

 For an $r\times t$ matrix $X$, and subsets $I \subset\{1,\dots, r\}$ and $J \subset \{1,\dots, t\}$ with $|I| = |J|$, we note by $X_{I,J}$ the square $|I| \times |I|$ submatrix of $X$ with row and columns in $I, J$, respectively. 

 If $r\leq t$, and $I =\{1,\dots, r\}$ and $J \subset \{1,\dots, t\} $, we simply write $X_{J}$ instead of $X_{I,J}$.  

\medskip

We use the notation of the introduction: choosing  a basis $\gamma_1, \dots, \gamma_h$ 
for $H_1(G, \mathbb Z)$, we denote by $M$ the $h\times m$ matrix of the coefficients of $\gamma_i$ 
in the standard 
basis $\{e_i\}$ of $\R^m$.  Similarly, for the element $\omega \in \R^E$ in the inverse image $\partial^{-1}(\omega)$ 
of the external momenta vector $\p = (\p_v)$, we denote by $H_\omega$ the $(h+1)$-dimensional 
vector subspace of $\R^m$ generated by $\omega$ and $H_1(G,\R)$. The space
$H_\omega$ comes with a basis consisting of $\gamma_1, \dots, \gamma_h, \omega$, and we denote by $N$ 
the $(h+1)\times m$ matrix of the coefficients of this basis in the standard 
basis $\{e_i\}$ of $\R^m$.

\medskip

 By Cauchy-Binet formula, we have 
\begin{align}\label{eq:CB1}
 \det(N Y N^\tau) &= \sum_{\substack{I,J \subset \{1,\dots, m\}\\ |I| = |J| = h+1}} \det(N_I) \det(Y_{I,J}) \det(N_J),
\end{align}
which, using that $Y$ is diagonal, can be further reduced to 
$$\det(N Y N^\tau) =\sum_{\substack{I,J \subset \{1,\dots, m\}\\ |I| = h+1}} \det(N_I)^2 y^I,$$
where as usual, we pose $y^I := \prod_{i\in I} y_i$.  Similarly, we have
 
 \begin{align}\label{eq:CB2}
 \det(M Y M^\tau) &= \sum_{\substack{I \subset \{1,\dots, m\}\\ |I| = h}} \det(M_I)^2 y^I.
\end{align}

  For a subgraph $F$ in $G$, by an abuse of the notation,  we write $F^c$ (instead of $E \setminus E(F)$) 
  for the set of edges of $G$ not in $F$.

 \begin{lem} \label{lem:aux1}\begin{itemize}
              \item[(1)] For a subset $I \subseteq \{1,\dots, m\}$ of size $h$, we have $\det(M_I) \neq 0$ if and only if $I = T^c$ for 
             a spanning tree $T$ of $G$. In this case, we have 
              $\det(M_I)^2 = 1$.	
              \item[(2)] For a subset  $I \subseteq \{1,\dots, m\}$ of size $h+1$, we have $\det(M_I) \neq 0$ if and only 
              if $I =  F^c$ for a spanning 2-forest $F$ of $G$. In this case, we have 
              $\det(N_I)^2 = q(F) = (\sum_{v\in X)} \p_v).(\sum_{v\in X}\p_v)$, where $\{X,Y\}$ denotes the partition of $V$ given by $F$.	
             \end{itemize}
 \end{lem}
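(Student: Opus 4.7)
The plan is to reduce both parts of the lemma to linear-algebraic properties of the inclusions $H_1(G,\Z)\hookrightarrow \Z^E$ and $H_\omega\hookrightarrow \Z^E$, and then exploit a basis of each adapted to a well-chosen spanning tree.

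\textbf{Part (1).} I would start from the observation that $\det(M_I) \neq 0$ if and only if the composition $H_1(G,\R) \hookrightarrow \R^E \twoheadrightarrow \R^I$ (projection to the $I$-coordinates) is an isomorphism, equivalently $H_1(G,\R) \cap \R^{I^c} = 0$, i.e.\ no nonzero cycle is supported on $I^c$. Since $|I^c| = m - h = n - 1$, this cycle-free condition is equivalent to $I^c = T$ for a spanning tree $T$ of $G$. To evaluate $\det(M_{T^c})^2$ it is then convenient to switch to the fundamental cycle basis $\{C_e\}_{e \in T^c}$ of $H_1(G,\Z)$, where $C_e$ is the unique cycle supported on $T \cup \{e\}$ with coefficient $\pm 1$ at $e$ and $0$ at the other edges of $T^c$. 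In this basis the $T^c$-columns of $M$ form a signed identity matrix of determinant $\pm 1$; any other $\Z$-basis of $H_1(G,\Z)$ differs from this one by an element of $GL_h(\Z)$, hence by determinant $\pm 1$, so $\det(M_{T^c})^2 = 1$ in every basis.

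\textbf{Part (2), non-vanishing.} The same argument gives $\det(N_I) \neq 0$ if and only if $H_\omega \otimes \R$ intersects $\R^{I^c}$ trivially. Writing a general element of $H_\omega$ as $c + \lambda \omega$ with $c \in H_1(G,\R)$ and applying $\partial$, this splits into: (a) $I^c$ supports no nonzero cycle, and (b) $\p$ does not lie in $\partial(\R^{I^c})$ (else one could take $\lambda \neq 0$). Because $|I^c| = n-2$, condition (a) forces $I^c = F$ to be a spanning $2$-forest with partition $\mathcal P(F) = \{X,Y\}$. For such an $F$ one has $\partial(\R^F) = \R^{X,0} \oplus \R^{Y,0}$, since restricting $\partial$ to each of the two tree components $F[X]$, $F[Y]$ is surjective onto the degree-zero subspace on that component. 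Thus (b) becomes $\p_X := \sum_{v \in X}\p_v \neq 0$, matching the fact that $q(F) = \p_X \cdot \p_X$ vanishes exactly in the forbidden case.

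\textbf{Part (2), value.} To identify $\det(N_{F^c})^2$ with $q(F)$, I would pick an auxiliary edge $e^* \in E(\mathcal P(F))$, so that $T := F + e^*$ is a spanning tree and $F^c = T^c \sqcup \{e^*\}$. As a basis of $H_\omega$ I use the fundamental cycles $\{C_e\}_{e \in T^c}$ together with the unique $\omega_T \in \R^T$ satisfying $\partial \omega_T = \p$, which is well-defined because $T$ is a tree. Any two choices of $\omega$ differ by an element of $H_1$, so this normalization changes the basis of $H_\omega$ only by a unimodular transformation and does not affect $\det(N_{F^c})^2$. In the matrix $N_{F^c}$ with columns ordered as $T^c$ followed by $\{e^*\}$, the $T^c \times T^c$ block is the same signed identity as in part (1), while the $\omega_T$-row vanishes on the $T^c$-columns because $\omega_T$ is supported on $T$. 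Expanding along that last row gives $\det(N_{F^c}) = \pm\, \omega_T|_{e^*}$. Finally, removing $e^*$ from $T$ yields $F$ with components on vertex sets $X$ and $Y$, and the relation $\partial \omega_T = \p$, applied to either side of the cut, forces $\omega_T|_{e^*} = \pm \p_X$. Squaring produces $\det(N_{F^c})^2 = \p_X^2 = q(F)$.

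\textbf{Main obstacle.} Nothing in the argument is conceptually hard; the only point that needs real care is sign bookkeeping --- orientations of edges, the signs appearing in $C_e$, and the ordering of columns --- to make sure the block expansion yields an unambiguous $\pm 1$ prefactor that disappears upon squaring. Everything else is a direct translation of standard matroid facts (cycles versus cotrees, $2$-forests as codimension-one cuts) into elementary linear algebra.
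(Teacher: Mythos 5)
Your argument is correct, and it takes a genuinely different route from the paper for the substantive part of the lemma. The paper dismisses part (1) as folklore and proves part (2) by Laplace expansion of $\det(N_I)$ along the $\omega$-row, thereby writing $\det(N_I)$ as a signed sum $\sum_{j}\pm\,\omega(e_{i_j})\det(M_{T_j^c})$ over all edges $e_{i_j}$ of the cut $E(\mathcal P(F))$, where $T_j=F+e_{i_j}$. The delicate point in that approach is precisely the one you avoid: to recognize this sum as $(\pm\p_X)\cdot(\pm 1)$ the paper must prove that the terms $(-1)^j\det(M_{T_j^c})$ all carry the same sign, which it does via a multilinearity argument and a cycle relation among the columns. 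Your proof sidesteps that entirely by choosing the auxiliary basis $\{C_e\}_{e\in T^c}\cup\{\omega_T\}$ with $\omega_T$ supported on a single spanning tree $T=F+e^*$, which makes $N_{F^c}$ block triangular with a single nonzero entry $\omega_T|_{e^*}$ in the last row; the determinant is then one term, identified with $\pm\p_X$ by applying $\partial$ across the cut. Your non-vanishing argument via $H_\omega\cap\R^{I^c}=0$ and the identification $\partial(\R^F)=\R^{X,0}\oplus\R^{Y,0}$ is also cleaner than the paper's route (which infers non-vanishing from the Laplace expansion and part (1)), and it correctly isolates the condition $\p_X\neq 0$, which the paper's ``if and only if'' formulation glosses over when $q(F)=0$. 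One very minor point of precision: when $\p$ is not integral, the replacement $\omega\mapsto\omega_T=\omega+c$ with $c\in H_1(G,\R)$ is not a unimodular change of basis over $\Z$, but it is a unipotent (determinant $1$) change over $\R$, which is all you need for $\det(N_{F^c})^2$ to be unaffected; the genuinely unimodular step is the change from $\{\gamma_i\}$ to the fundamental cycle basis $\{C_e\}$. With that wording tightened, the proof is complete.
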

 
 \begin{proof} These facts are folklore. Here we only prove (2), part (1) has a similar proof. 

 Denote by $e_{i_1}, \dots, e_{i_{h+1}}$ the $(h+1)$ edges of $I$. 
 Developing $\det(N)$ with respect to the last row (which corresponds to the coefficients of $\omega$), 
 we have  \[\det(N_I) = \sum_{j=1}^m (-1)^j \omega(e_i) \det(M_{I \setminus\{e_{i_j}\}}). \]
 From the first part, it follows that $\det(N_I) =0$ if none of $I - e_{i_j}$ is the complement set of edges 
 of a spanning tree, i.e., if $I$ is not of the form $F^c$ for a spanning 2-forest of $G$. So suppose now that 
 $I= F^c$, denote by $X, Y$ the partition of $V$ induced by $F$, and without loss of generality, let 
 $e_{i_1}, \dots, e_{i_r}$ be the set of all the edges in $E(\mathcal P(F))$. We can assume that $e_i$'s are all 
 oriented from $X$ to $Y$.  Let $T_j = F \cup\{e_{i_j}\}$ the spanning tree $F \cup\{e_{i_j}\}$ for $j=1,\dots, r$. It follows that 
  \[\det(N_I) = \sum_{j=1}^r (-1)^j \omega(e_i) \det(M_{T_j^c}). \]
 Since $\partial(\omega) = \p$, and the edges $e_{i_1},\dots, e_{i_r}$ are oriented from $X$ to $Y$, it follows that 
 \[\sum_{j=1}^r \omega(e_{i_j}) = \sum_{v\in X} \p_v. \]
 
 So the lemma follows once we prove that $(-1)^j \det(M_{I \setminus\{e_{i_j}\}})$ takes the same value for all 
 $j=1,\dots, j$. By symmetry, it will be enough to prove 
 $ \det(M_{T_{1}^c}) + \det(M_{T_2^c}) =0$. By multi-linearity of the determinant with respect to the 
 columns we see that $\det(M_{T_{1}^c}) + \det(M_{T_2^c}) = \det(P)$ where $P$ is the $h\times h$ matrix with
 the first column 
 equal to the sum of the first columns of 
 $M_{T_{1}^c}$ and $M_{T_{1}^c}$, and the $j$'th column equal to the $j$'th column of $M_{T_{1}^c}$ 
 (which is the same as that of 
 $M_{T_{1}^c}$), for $j\geq 1$. So it is enough to show that $\det(P)=0$. 
 The subgraph $F \cup\{e_{i_1}, e_{i_2}\}$ has  a cycle $\gamma$ which contains $e_{i_1}, e_{i_2}$ 
 from $F^c$ and all the other edges are in 
 $F$. Writing $\gamma$ as a linear combination $\gamma=\sum_{j=1}^h\gamma_j$ of the cycles $\gamma_j$, we show that 
 $(a_1, \dots, a_h)P=0$. The first coefficient of $(a_1, \dots, a_h)P$ is zero since the cycle 
 $\gamma$ has $e_{i_1}$ and $e_{i_2}$ with different signs. All the other coordinates of 
 $(a_1, \dots, a_h)P$ are zero since the only edges of $\gamma$ in $F^c$ are $e_{i_1}$ and $e_{i_2}$.
 \end{proof}

 \begin{remark} The proof of the above lemma shows the following useful property. Suppose that 
 $I$ and $J$ are the complement of the edges of two (vertex-)equivalent 2-forests $F_1 \sim_v F_2$ 
 inducing the partition of $V =X \sqcup Y$, and $e \in E(\mathcal \{X,Y\})$ is an edge with one end-point in each of $X$ and $Y$, so
 both  $T_1 = F_1 \cup \{e\}$ and $T_2= F_2  \cup \{e\}$ are spanning trees. Then 
$$\frac{\det(N_I)}{\det(M_{T_1^c})} = \frac{\det(N_{J})}{\det(M_{T_2^c})} = \pm\sum_{e\in E(X,Y)}\omega(e),$$ 
where $e$ in the above sume runs over all the oriented edges from $X$ to $Y$. 
% (This is obtained by developing $\det(N_{I})$ and $\det(N_{J})$ with respect to the last row.)
In particular, we have 
\begin{equation}\label{eq:u1}
 \det(N_I) \det(N_J) = q(F_1) \det(M_{T_1^c})\det(M_{T_2^c}) =q(F_2) \det(M_{T_1^c})\det(M_{T_2^c}).
\end{equation}
 \end{remark}

%  It is well-known that $\det(N_I) \neq 0$ only if $I$ is the complement of the edges of a spanning 2-forest $F$. In addition, if $I=F^c$ for a 2-forest $F$ with connected components having vertex sets $V_1, V_2 \subset V$ 
%  in $G$, $V = V_1 \sqcup V_2$, we have  (c.f.~\cite{Chaiken}, compare with Proposition~\ref{prop:second})
%  $$|\det(N_I)| = |\sum_{v\in V_1}\omega(v)| = |\sum_{v\in V_2} \omega(v)|.$$
% 
%  We note that $\det(N_I)^2 = q(F)$ in the notation of Section~\ref{sec:sym}.
%  

\medskip

From Lemma~\ref{lem:aux1} we infer that 
in the sum~\eqref{eq:CB2} (resp.~\eqref{eq:CB2}) above 
describing $\det(M Y M^\tau)$ (resp. $\det(N Y N^\tau)$), 
the only non-zero terms correspond to subsets $I$ which are complements of the edges a spanning tree (resp. spanning 2-forest) of $G$.

\medskip
 Consider the set-up of Theorem~\ref{claim:aux} as in the introduction, where 
 $U$ is a topological space and $y_1,\dots, y_m : U \rightarrow \mathbb R_{>0}$ are 
 $m$ continuous functions. Denote by $Y$ the diagonal matrix-valued function on $U$ given by $Y(s) =\mathrm{diag}(y_1(s), \dots, y_m(s))$. 
 Let $\p  \in (\mathbb R)^{V,0}$ be a fixed vector

Define two real-valued functions $f_1$ and $f_2$ on $U$ by
\begin{align}
 f_1(s) := \det(MYM^\tau)= \sum_{\substack{T \in \tree \\ I= T^c}}  y(s)^I,\,\,\textrm{and}
\end{align}
and
\[f_2(s) := \det(N Y N^\tau) = \sum_{\substack{F\in \forest \\ I = F^c }} q(F) y(s)^I,\]
at each point $s\in U$.
Note that $f_1(s) = \phi(\underline y(s))$, for $\phi$ the first Symanzik polynomial, and 
$f_2(s) = \psi_G(\omega, \underline y(s)))$, for $\psi$ the second Symanzik polynomial of the graph $G$.

\medskip

Let now $A: U \rightarrow \mathrm{Mat}_{m\times m}(\R)$ be a matrix-valued map taking at $s\in U$ the value 
 $A(s)$.  Assume that $A$ verifies the two properties
 
 \begin{itemize}
 \item[(i)] $A$ is a bounded function, i.e., all the entries $A_{i,j}$ of $A$ take values in a bounded interval $[-C, C]$ of $\R$, for some positive constant $C>0$.
 \item[(ii)] The two matrices $M(Y+A)M^\tau$ and $N (Y+A) N^\tau$ are invertible. 
 \end{itemize}

\medskip

Define real-valued functions $g_1, g_2$ on $U$ by $g_1(s):= \det(M (Y+A) M^\tau)$ and 
$g_2(s) = \det(N (Y+A) N^\tau)$, we have by Cauchy-Binet formula, 

\begin{align*}
  g_1 = \sum_{\substack{T_1, T_2 \in \tree  \\ I = T_1^c, J = T_2^c}} \det(M_I) \det(Y+A)_{I,J} \det(M_J), \,\,\textrm{and}
 \end{align*}

\begin{align*}
 g_2 &= \sum_{\substack{ F_1, F_2\in \forest  \\ I = F_1^c, J = F_2^c}} \det(N_I) \det(Y+A)_{I,J} \det(N_J).
\end{align*}

 To prove Theorem~\ref{claim:aux}, we must show that $g_2/g_1 - f_2/f_1 = O_{\underline y}(1)$ on $U$. 
 Observe first that 

\begin{claim}\label{claim:aux1}
 There exist constants $c_1,c_2,  C>0$ such that
\begin{equation}\label{eq:o1}
 c_1 f_1(s) \,<\, g_1(s)\,<\, c_2 f_1(s),
\end{equation}
for all points $s\in U$ with $y_1(s), \dots, y_m(s) \geq C$. 
\end{claim}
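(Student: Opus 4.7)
The plan is to expand $g_1$ via Cauchy--Binet and isolate $f_1$ as the top-degree-in-$y$ part of the expansion. By Lemma~\ref{lem:aux1}(1), only pairs $(I,J)$ of the form $I=T_1^c$, $J=T_2^c$ with $T_1,T_2\in\tree$ contribute to $g_1$, and for these pairs $|\det(M_I)\det(M_J)|=1$. Expanding each $\det((Y+A)_{I,J})$ as a signed sum over bijections $\sigma:I\to J$ of $\prod_{i\in I}(Y+A)_{i,\sigma(i)}$, the variable $y_i$ can appear only at a fixed point $\sigma(i)=i$, and hence only for $i\in I\cap J$. The full $y$-degree $h$ is therefore reached only when $I=J=T^c$ and $\sigma=\mathrm{id}$, in which case the contribution is exactly $y^{T^c}$. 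Summing these top-degree contributions recovers precisely $f_1=\sum_{T\in\tree}y^{T^c}$.

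The remainder $g_1-f_1$ is thus a polynomial whose monomials $y^S$ all satisfy $|S|\leq h-1$ and $S\subseteq T_1^c\cap T_2^c$ for some pair of spanning trees; in particular, each such $S$ is contained in $T^c$ for some spanning tree $T$ of $G$. The coefficient of each $y^S$ is a bounded polynomial expression in the entries of $A$, hence uniformly dominated by a constant $K=K(G,C_A)$ depending only on the graph and on the $L^\infty$-bound $C_A$ of $A$ afforded by hypothesis~(i). The key estimate is then immediate: for any $S$ appearing in $g_1-f_1$, choose a spanning tree $T$ with $S\subseteq T^c$, and observe that when $y_e\geq C$ for every $e\in E$,
\[
y^S \;\leq\; \frac{y^{T^c}}{C^{\,h-|S|}} \;\leq\; \frac{f_1(s)}{C^{\,h-|S|}} \;\leq\; \frac{f_1(s)}{C}.
\]
Summing over the finitely many monomials yields $|g_1-f_1|\leq K\, f_1/C$, so choosing the threshold $C>2K$ inside the definition of $O_{\underline y}$ produces $\tfrac{1}{2}f_1<g_1<\tfrac{3}{2}f_1$, giving the claim with $c_1=1/2$ and $c_2=3/2$.

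The principal obstacle—indeed the only real content—is the combinatorial observation that every monomial of $g_1$ has $y$-support contained in the complement of some spanning tree. Without this restriction, an error monomial $y^S$ of degree $h-1$ could have order $y_{\max}^{h-1}$, which need not be dominated by the individual summands $y^{T^c}$ of $f_1$ (each of which can be as small as $y_{\min}^h$). The containment $S\subseteq T^c$ produced by the fixed-point analysis of the permutation expansion is exactly what guarantees that $f_1$ carries a term $y^{T^c}\geq C\cdot y^S$, supplying the needed domination and allowing a uniform constant in $C$.
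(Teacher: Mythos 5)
Your proposal is correct and follows essentially the same strategy as the paper: expand $\det((Y+A)_{I,J})$ over permutations, observe that $y$-variables occur only at fixed points so that every error monomial's support lies in $I\cap J\subseteq T_1^c$ for some spanning tree $T_1$, and deduce that each such monomial is dominated by the corresponding summand $y^{T_1^c}$ of $f_1$ once all $y_e\geq C$. The paper phrases this more tersely (each off-diagonal or non-identity term is $o(y^I)$, hence $o(f_1)$), while your write-up makes the key containment $S\subseteq T^c$ explicit and quantifies the constant; one small nit is that the single term $(I,J,\sigma)=(T^c,T^c,\mathrm{id})$ contributes $\prod_{i\in T^c}(y_i+A_{i,i})$ rather than ``exactly $y^{T^c}$'', but its degree-$h$ part is $y^{T^c}$ and the lower-degree leftovers fall into the same remainder analysis, so the argument stands.
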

\begin{proof}
By assumption, all the coordinates of $A$ are bounded functions on $U$. Developing the determinant
$\det(Y+A)_{I,J}$ as a sum (with $\pm$ sign) over permutations of the products of entries of $(Y+A)_{I,J}$, 
one observes that each term in the sum is the product of a bounded function with 
a monomial in the $y_j$'s for indices $j$ in a subset of $I\cap J$. 
 For $I \neq J$, these terms become $o(y^I)$. 
Also for $I=J$, all the terms but the unique one coming from the product of the entries on the diagonal which 
gives $y^I$ are $o(y^I)$. Since $f_1 =\sum_{T\in \tree} y^{T^c}$, the assertion follows.
\end{proof}

Therefore, in order to prove Theorem~\ref{claim:aux}, it will be enough to show that 
\begin{equation}\label{claim:fin}
 g_2f_1 - g_1 f_2 = O_{\underline y}(f_1^2).
\end{equation}

\medskip

In considering the terms in $g_2f_1 - g_1 f_2$  it will be convenient to define the bipartite graph 
$\mathfrak G = (\mathfrak V , \mathfrak E)$, a variation of the exchange graph introduced in the previous section. 
The vertex set $\mathfrak V$ of $\mathfrak G$ is partitioned into two sets 
$\mathfrak V_1$ and $\mathfrak V_2$ with 
\[\mathfrak V_1 := \Bigl\{ (F_1, F_2, T) \,|\, F_1, F_2 \in \forest, T\in \tree\Bigr\},\]
and
\[\mathfrak V_2 := \Bigl\{ (T_1, T_2, F) \,|\, T_1, T_2 \in \tree, F\in \forest\Bigr\}\]

There is an edge between $(F_1, F_2, T) \in \mathfrak V_1$ and $(T_1, T_2, F)\in \mathfrak V_2$ in $\mathfrak G$ iff there is an edge $e\in E$ such that $T = F +e $, and $F_1 = T_1 -e$ and $F_2 = T_2 -e$.

\begin{defn}\rm
If $(F_1, F_2, T) \in \mathfrak V_1$ and $(T_1, T_2, F)  \in \mathfrak V_2$ are adjacent in $\mathfrak G$, we say $(T_1, T_2, F)  \in \mathfrak V_1$ is obtained from $(F_1, F_2, T)$ 
by \emph{pivoting involving the edge} $e$ (with $E(T) \setminus E(F) =\{e\}$).
\end{defn}

Define two weight functions $\xi, \zeta: \mathfrak V \rightarrow C^0(U,\R)$ on the vertices of $\mathfrak G$ 
as follows.  For $(F_1, F_2, T) \in \mathfrak V_1$, let
\begin{align*}
\xi(F_1, F_2, T)&: =  \det(Y+A)_{F_1^c,F_2^c}\,\,y^{T^c},\\
\zeta(F_1, F_2, T) &:= \det(N_{F_1^c})\det(N_{F_2^c}) \, \xi(F_1, F_2, T),
\end{align*}
and for $(T_1, T_2, F) \in \mathfrak V_2$, define 
\begin{align*}
\xi(T_1, T_2, F) &:= \det(Y+A)_{T_1^c,T_2^c}\,\,y^{F^c},\\
\zeta(T_1, T_2, F) &:= \det(M_{T_1^c})\det(M_{T_2^c}) q(F)\,\xi(T_1, T_2, F).
\end{align*}
Note that we have 
\begin{equation} \label{eq:us1}
g_2f_1 = \sum_{(F_1, F_2, T) \in \mathfrak V_1} \zeta(F_1, F_2, T),
\end{equation}
and 
\begin{equation} \label{eq:us2}
g_1f_2 = \sum_{(T_1, T_2, F) \in \mathfrak V_2} \zeta(T_1, T_2, F).
\end{equation}

We have the following
\begin{claim}\label{claim:trivial} \begin{itemize}
               \item For any $(F_1, F_2, T) \in \mathfrak V_1$, we have 
\[\xi(F_1, F_2, T) = O_{\underline y}(y^{F_1^c\cap F_2^c} y^{T^c}).\]
\item For any $(T_1, T_2, F) \in \mathfrak V_2$, we have 
\[\xi(T_1, T_2, F) = O_{\underline y}(y^{T_1^c \cap T_2^c} y^{F^c}).\]
\item For two adjacent vertices $(F_1, F_2, T)\in \mathfrak V_1$ and $(T_1, T_2, F) \in \mathfrak V_2$, we have 
\[\xi(F_1, F_2, T)  = \xi(T_1, T_2, F) + O_{\underline y}(f_1^2).\] 
              \end{itemize}
\end{claim}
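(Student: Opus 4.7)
The plan is to establish (1) and (2) by a direct Leibniz expansion of the submatrix determinant $\det(Y+A)_{I,J}$, and to deduce (3) by a cofactor expansion of $\det(Y+A)_{F_1^c,F_2^c}$ along the row indexed by the pivot edge, after which the error is bounded by an elementary monomial comparison with $f_1^2$.

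For (1), since $(Y+A)_{i,j} = y_i\delta_{i,j} + A_{i,j}$, a term in the Leibniz expansion of $\det(Y+A)_{F_1^c, F_2^c}$ picks up a factor $y_i$ only from a diagonal entry, which forces $i \in F_1^c \cap F_2^c$; all remaining factors are bounded entries of $A$. The hypothesis $y_i(s) \geq C$ gives $y_i + A_{i,i} = O(y_i)$, so each Leibniz term is bounded up to a uniform constant by $y^S$ for some $S \subseteq F_1^c \cap F_2^c$, and hence by $y^{F_1^c \cap F_2^c}$ after absorbing missing factors (using $y_i \geq 1$). Multiplying by $y^{T^c}$ yields (1); part (2) is verbatim the same argument.

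For (3), fix adjacent vertices with pivot edge $e$, so that $F_1^c = T_1^c \cup \{e\}$, $F_2^c = T_2^c \cup \{e\}$, and $T^c = F^c \setminus \{e\}$. Expanding $\det(Y+A)_{F_1^c,F_2^c}$ along the row indexed by $e$, the entry at position $(e,e)$ is $y_e + A_{e,e}$ while the entries $(e,j)$ with $j \in T_2^c$ are the bounded $A_{e,j}$. The contribution $y_e\cdot\det(Y+A)_{T_1^c,T_2^c}\cdot y^{T^c}$ equals $\det(Y+A)_{T_1^c,T_2^c}\,y^{F^c} = \xi(T_1,T_2,F)$, using $y_e\,y^{T^c} = y^{F^c}$. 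Consequently $\xi(F_1,F_2,T) - \xi(T_1,T_2,F)$ splits as the sum of one term $A_{e,e}\,\det(Y+A)_{T_1^c,T_2^c}\,y^{T^c}$ and, for each $j \in T_2^c$, a term $\pm A_{e,j}\,\det(Y+A)_{T_1^c,(T_2^c\setminus\{j\})\cup\{e\}}\,y^{T^c}$.

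It remains to bound each error term by a constant multiple of $f_1^2$ on the appropriate region. Since $e \in T_1$ we have $e \notin T_1^c$, so $T_1^c \cap \bigl((T_2^c\setminus\{j\})\cup\{e\}\bigr) \subseteq T_1^c \cap T_2^c$, and by the same determinant estimate used in (1) together with the boundedness of the $A$-prefactor, every error term is $O_{\underline y}\bigl(y^{T_1^c \cap T_2^c}\,y^{T^c}\bigr)$. Assuming $y_i \geq 1$ (enlarge $C$ if needed), we drop the factors indexed by $T_1^c \setminus T_2^c$ to obtain $y^{T_1^c \cap T_2^c} \leq y^{T_1^c}$; since $T_1$ and $T$ are spanning trees, both $y^{T_1^c}$ and $y^{T^c}$ appear as monomials in $f_1 = \sum_{T' \in \tree} y^{T'^c}$, giving $y^{T_1^c}\,y^{T^c} \leq f_1^2$. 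The main conceptual step is the algebraic identification of the leading cofactor contribution with $\xi(T_1,T_2,F)$, which requires keeping the row/column bookkeeping straight; once this is in place, the estimates are routine monomial comparisons valid under the $O_{\underline y}$ regime.
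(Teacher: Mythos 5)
Your strategy is the same as the paper's: Leibniz expansion for the first two items, Laplace expansion along the pivot row $e$ for the third, with the error absorbed into $f_1^2$ via $y^{T_1^c}y^{T^c}\le f_1^2$. Your intermediate bound $O_{\underline y}(y^{T_1^c\cap T_2^c}\,y^{T^c})$ is a touch sharper than what the paper records, but the mechanism is identical.

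However, the "row/column bookkeeping" you wave at in the last sentence is exactly where both your proof and the paper's have a genuine gap. The $(e,e)$ cofactor in $\det(Y+A)_{F_1^c,F_2^c}$ carries the sign $(-1)^{p+q}$, with $p$ (resp.\ $q$) the position of $e$ in the increasingly ordered $F_1^c=T_1^c\cup\{e\}$ (resp.\ $F_2^c=T_2^c\cup\{e\}$); since $T_1^c$ and $T_2^c$ need not have equally many elements below $e$, these positions can have opposite parity, and then the expansion gives $\xi(F_1,F_2,T)=-\xi(T_1,T_2,F)+O_{\underline y}(y^{T_1^c}y^{T^c})$. The resulting difference $\approx-2\,\xi(T_1,T_2,F)$ has order $y^{T_1^c\cap T_2^c}y^{F^c}$, which is \emph{not} $O_{\underline y}(f_1^2)$: for two triangles glued at a vertex, with an edge ordering making $F_1^c=\{2,3,5\}$, $F_2^c=\{1,2,5\}$, $e=2$, $T_1^c=\{3,5\}$, $T_2^c=\{1,5\}$, $T=T_1$, $F=F_1$, and $A$ supported on the single entry $A_{31}$, one gets $\xi(F_1,F_2,T)-\xi(T_1,T_2,F)=-2A_{31}\,y_2y_3y_5^2$, which outgrows $f_1^2=(y_2+y_4+y_5)^2(y_1+y_3+y_6)^2$ as $y_2,y_4,y_5\to\infty$ with the remaining coordinates fixed. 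So the claim is false as stated; the correct version carries a factor $(-1)^{p+q}$ in front of $\xi(T_1,T_2,F)$. The same $(-1)^{p+q}$ is also silently omitted from the Remark preceding Equation~\eqref{eq:u1}, and the two unstated signs cancel exactly in Claim~\ref{claim:k2} --- which is why the downstream argument still holds --- but the sign must be tracked, not elided, in both places for that cancellation to be legitimate.
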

\begin{proof} The first two assertions are straightforward. To prove the last one, let $e$ be the unique edge in $T \setminus F$. We have  
\[\det(Y+A)_{F_1^c,F_2^c} = y_e \det(Y+A)_{T_1^c,T_2^c} + O_{\underline y}(y^{T_1^c}),\]
Multiplying both sides by $y^{T^c}$ gives
\[\xi(F_1, F_2, T)  = \xi(T_1, T_2, F) + O_{\underline y}(y^{T^c} y^{T_1^c}) = O_{\underline y}(f_1^2).\] 
\end{proof}

\begin{defn}\rm
 \begin{itemize}
  \item A tuple $(F_1, F_2, T) \in \mathfrak V_1$ is called \emph{special} if $F_1 \not\sim_v F_2$. 
  \item A tuple $(T_1, T_2, F) \in \mathfrak V_2$  is called \emph{special} if there exists either $e \in E(T_1) \setminus E(T_2)$ or $e\in E(T_2) \setminus E(T_1)$ such that $F + e$ is a spanning tree. 
 \end{itemize}
\end{defn}
The following observations are crucial for the proof of our theorem. They show that connected components of $\mathfrak G$ which contain special vertices have only "light weight" vertices.

\begin{claim}\label{claim:k}
\begin{enumerate}
 \item For any special vertex $\mathfrak w$ in $\mathfrak V$, we have 
  $$\xi(\mathfrak w) = O_{\underline y}( f_1^2).$$
 \item For any vertex $\mathfrak v \in \mathfrak V$ connected by a path in $\mathfrak G$ to a special vertex $\mathfrak u$, we have 
 \[\xi(\mathfrak v) = O_{\underline y}( f_1^2).\]
\end{enumerate}
\end{claim}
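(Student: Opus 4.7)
The plan is to establish part~(1) by a direct combinatorial argument, splitting into the cases $\mathfrak w \in \mathfrak V_1$ and $\mathfrak w \in \mathfrak V_2$, and then to deduce part~(2) by propagating the bound along a connecting path, iterating Claim~\ref{claim:trivial}(3) and using the fact that $\mathfrak G$ has only finitely many vertices. In both cases of part~(1), the key idea is to exhibit an auxiliary spanning tree $T'$ of $G$ whose complement $T'^c$ contains the monomial support appearing in the elementary bound of Claim~\ref{claim:trivial}; then, since all $y_i \geq C \geq 1$ in the regime of interest and $f_1 \geq y^{S^c}$ for every spanning tree $S$, the target estimate $\xi(\mathfrak w) = O_{\underline y}(f_1^2)$ falls out.

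For a special $\mathfrak w = (F_1, F_2, T) \in \mathfrak V_1$, I would invoke the characterization of non-equivalent 2-forests given in Section~\ref{sec:exchange} to produce $e \in E(F_2)$ such that $T' := F_1 + e$ is a spanning tree of $G$. Since $e \in F_2 \setminus F_1$, one has $e \in F_1^c \setminus F_2^c$, and hence $T'^c = F_1^c \setminus \{e\} \supseteq F_1^c \cap F_2^c$. Combined with the first part of Claim~\ref{claim:trivial}, this gives
\[
\xi(\mathfrak w) \;=\; O_{\underline y}\bigl(y^{F_1^c \cap F_2^c}\, y^{T^c}\bigr) \;=\; O_{\underline y}\bigl(y^{T'^c}\, y^{T^c}\bigr) \;=\; O_{\underline y}(f_1^2),
\]
as desired.

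The harder case, which I expect to be the main obstacle, is $\mathfrak w = (T_1, T_2, F) \in \mathfrak V_2$. After possibly swapping $T_1$ and $T_2$, choose $e \in E(T_1) \setminus E(T_2)$ with $T_e := F + e \in \tree(G)$. The critical combinatorial step is to exhibit a spanning tree $T' \subseteq (T_1 \cup T_2) \setminus \{e\}$: the deletion $T_1 - e$ is a spanning 2-forest with some vertex partition $\{X,Y\}$, and since $T_2$ is a spanning tree there is an edge $e' \in E(T_2)$ joining $X$ to $Y$, necessarily distinct from $e$ because $e \notin T_2$; then $T' := (T_1 - e) + e'$ is a spanning tree, and $T'^c \supseteq (T_1^c \cap T_2^c) \cup \{e\}$. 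Combining this with the factorization $y^{F^c} = y_e\, y^{T_e^c}$ (valid since $e \notin F$) and the observation that $e \in T_1$ forces $e \notin T_1^c \cap T_2^c$, I obtain
\[
y^{T_1^c \cap T_2^c}\, y^{F^c} \;=\; y^{(T_1^c \cap T_2^c) \cup \{e\}}\, y^{T_e^c} \;\leq\; y^{T'^c}\, y^{T_e^c} \;\leq\; f_1^2,
\]
and the second part of Claim~\ref{claim:trivial} then yields $\xi(\mathfrak w) = O_{\underline y}(f_1^2)$.

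For part~(2), I would fix any path $\mathfrak v = \mathfrak v_0, \mathfrak v_1, \ldots, \mathfrak v_k = \mathfrak u$ in $\mathfrak G$ joining $\mathfrak v$ to a special vertex $\mathfrak u$, and apply the third part of Claim~\ref{claim:trivial} once per edge of the path to obtain $\xi(\mathfrak v) = \xi(\mathfrak u) + O_{\underline y}(f_1^2)$, which is $O_{\underline y}(f_1^2)$ by part~(1). Since the number of vertices of $\mathfrak G$ depends only on $G$, the accumulated implied constant is uniform in $s \in U$, which completes the argument.
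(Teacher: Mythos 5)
Your proof is correct and follows the same overall strategy as the paper's. The one place where you diverge is the $\mathfrak V_2$ case of part~(1): after choosing $e\in E(T_1)\setminus E(T_2)$ with $T_e:=F+e$ a spanning tree, you construct a fresh spanning tree $T'=(T_1-e)+e'$ for a suitable $e'\in E(T_2)$ and then show $(T_1^c\cap T_2^c)\cup\{e\}\subseteq T'^c$. This works, but it is more than is needed: since $e\notin T_2$ you already have $e\in T_2^c$, and combined with $T_1^c\cap T_2^c\subseteq T_2^c$ this immediately gives $(T_1^c\cap T_2^c)\cup\{e\}\subseteq T_2^c$, so the existing tree $T_2$ plays the role of your auxiliary $T'$ and the extra matroid-exchange step is superfluous. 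The paper takes this shorter route and concludes $y^{T_1^c\cap T_2^c}y^{F^c}\le y^{T_2^c}y^{T_e^c}\le f_1^2$ directly. Your $\mathfrak V_1$ case and your path-propagation argument for part~(2) match the paper's proof.
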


\begin{proof}(1) If $\mathfrak w = (F_1, F_2, T)\in \mathfrak V_1$, then since $F_1 \not\sim_v F_2$, there exists an edge $e\in F_2$ such that $T_1=F_1 + e$ is  a tree. 
In this case, we have $F_1^c\cap F_2^c  = (F_1 \cup F_2)^c\subset T_1^c$, and so we have by Claim~\ref{claim:trivial}, 
\[\xi(F_1, F_2, T) = O_{\underline y}(y^{F_1^c \cap F_2^c} y^{T^c}) = O_{\underline y}(y^{T_1^c} y^{T^c}) = O_{\underline Y}(f_1^2).\] 
Similarly, if $\mathfrak w = (T_1, T_2, F)\in \mathfrak V_2$ is special, assume without loss of generality that there is an edge $e\in E(T_1)\setminus E(T_2)$ such that $T = F+e$ is a spanning tree. 
Since $e\notin T_2$, applying Claim~\ref{claim:trivial}, 
we get
 \[\xi(T_1, T_2, F) = O_{\underline y} (y^{T_1^c \cap T_2^c} y^{F^c})  = O_{\underline y} (y^{T_2^c})(y^{T^c}) = O_{\underline y} (f_1^2).\]

\noindent (2) This follows from (1) and the third assertion in Claim~\ref{claim:trivial}.
\end{proof}

\begin{defn}\rm Let $\p \in \mathbb R^{V,0}$ be the vector of external momenta. 
For any vertex $\mathfrak u = (T_1, T_2, F) \in \mathfrak V_2$, define $q(\mathfrak u) := q(F)$. For any  vertex $\mathfrak v = (F_1, F_2, T) \in \mathfrak V_1$ with $F_1 \sim_v F_2$, define $q(\mathfrak v):=q(F_1) = q(F_2)$.
\end{defn}
We have the following useful property. 
\begin{claim}\label{claim:k2}
For two adjacent vertices $\mathfrak v \in \mathfrak V_1$ and $\mathfrak u \in \mathfrak V_2$ with $\mathfrak v$ non-special, we have 
\[q(\mathfrak u) \zeta(\mathfrak v) = q(\mathfrak v) \zeta(\mathfrak u) + O_{\underline y}(f_1^2).\]
\end{claim}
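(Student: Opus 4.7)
\textbf{Proof proposal for Claim~\ref{claim:k2}.} The plan is to reduce the desired equality to the identity~\eqref{eq:u1} from the remark following Lemma~\ref{lem:aux1}, multiplied by the perturbation estimate in Claim~\ref{claim:trivial}(3). Write $\mathfrak v = (F_1,F_2,T)$ and $\mathfrak u = (T_1,T_2,F)$, and let $e$ be the edge involved in the pivoting, so that $T = F+e$, $T_1 = F_1+e$, and $T_2 = F_2+e$.

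The first step is to observe that because $\mathfrak v$ is non-special, we have $F_1 \sim_v F_2$; let $\{X,Y\}$ denote their common vertex partition $\mathcal P(F_1)=\mathcal P(F_2)$. Since $F_1+e$ and $F_2+e$ are spanning trees, the edge $e$ must join $X$ to $Y$, i.e.\ $e \in E(\{X,Y\})$. This is exactly the hypothesis needed to apply equation~\eqref{eq:u1}, which yields
\[
\det(N_{F_1^c})\,\det(N_{F_2^c}) \;=\; q(F_1)\,\det(M_{T_1^c})\,\det(M_{T_2^c}).
\]

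Next, I substitute the definitions of $\zeta$ into both sides and use $q(\mathfrak v)=q(F_1)=q(F_2)$ and $q(\mathfrak u)=q(F)$:
\begin{align*}
q(\mathfrak u)\,\zeta(\mathfrak v) &= q(F)\,\det(N_{F_1^c})\det(N_{F_2^c})\,\xi(F_1,F_2,T) \\
&= q(F)\,q(F_1)\,\det(M_{T_1^c})\det(M_{T_2^c})\,\xi(F_1,F_2,T),\\
q(\mathfrak v)\,\zeta(\mathfrak u) &= q(F_1)\,q(F)\,\det(M_{T_1^c})\det(M_{T_2^c})\,\xi(T_1,T_2,F).
\end{align*}
Subtracting, the common prefactor $q(F)\,q(F_1)\,\det(M_{T_1^c})\det(M_{T_2^c})$ is a constant (depending only on $G$, the orientation, and the fixed vector $\p$), and the remaining factor is $\xi(F_1,F_2,T)-\xi(T_1,T_2,F)$.

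Finally, applying Claim~\ref{claim:trivial}(3) to the adjacent pair $\mathfrak v,\mathfrak u$ gives $\xi(F_1,F_2,T) - \xi(T_1,T_2,F) = O_{\underline y}(f_1^2)$, and multiplying by the bounded constant above yields
\[
q(\mathfrak u)\,\zeta(\mathfrak v) - q(\mathfrak v)\,\zeta(\mathfrak u) \;=\; O_{\underline y}(f_1^2),
\]
which is the claim. The only non-routine step is the first one: recognizing that non-specialness of $\mathfrak v$ forces the pivoting edge $e$ to cross the partition $\mathcal P(F_1)=\mathcal P(F_2)$, so that~\eqref{eq:u1} applies and the $N$-minors can be traded for the $M$-minors with the correct $q(F_1)$ factor; once that identity is in hand the rest is bookkeeping.
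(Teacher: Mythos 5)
Your proof is correct and follows essentially the same route as the paper: multiply the estimate $\xi(\mathfrak v)=\xi(\mathfrak u)+O_{\underline y}(f_1^2)$ from Claim~\ref{claim:trivial}(3) by the constant $q(F)\det(N_{F_1^c})\det(N_{F_2^c})$ and trade the $N$-minors for $M$-minors via Equation~\eqref{eq:u1}. The only difference is that you spell out the (routine) verification that the pivoting edge $e$ crosses the common partition $\mathcal P(F_1)=\mathcal P(F_2)$, which the paper leaves implicit when it invokes~\eqref{eq:u1}.
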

\begin{proof} 
Let $\mathfrak v=(F_1, F_2, T) \in \mathfrak V_1$ and $\mathfrak u=(T_1, T_2, F) \in \mathfrak V_2$, and let $e$ be the edge in $E$ with
$T = F +e$,  $T_1 = F_1 + e$ and $T_2 = F_2 + e$. 
By assumption, we have $F_1 \sim_v F_2$. We already noted that 
\[\xi(\mathfrak v) = \xi(\mathfrak u) + O_{\underline y}(f_1^2).\]
Multiplying both sides of this equation by $\det(N_{F_1^c})\det(N_{F_2^c})q(F)$, 
and using Equation~\eqref{eq:u1}, $\det(N_{F_1^c})\det(N_{F_2^c}) =\det(M_{T_1^c})\det(M_{T_2^c}) q(F_1),$
gives the result. 
\end{proof}

As immediate corollary of the above claims, we get
\begin{cor}\label{cor:hf} \begin{itemize}
\item Let $\mathcal G$ be a connected component of $\mathfrak G$. 
If $\mathcal G$ contains a special vertex, then for any vertex $\mathfrak v\in \mathfrak V(\mathcal G)$, 
we have 
 \[\zeta(\mathfrak v) = O_{\underline y}(f_1^2).\]
 \item Let $\mathcal G$ be a  connected component of $\mathfrak G$ which does not contain any special 
 vertex.  There exists $\zeta$ such that for any vertex $\mathfrak w$ of $\mathcal G$, we have 
 \[\zeta(\mathfrak w) = q(\mathfrak w) \zeta + O_{\underline y}(f_1^2). \]
 \end{itemize}
\end{cor}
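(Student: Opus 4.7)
The plan is to combine the earlier claims with the connectedness of the exchange-graph component $\mathcal{G}$, propagating control along paths.

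For part (1), since $\mathcal{G}$ is connected and contains a special vertex, every $\mathfrak{v}\in \mathfrak{V}(\mathcal{G})$ lies on a path to a special vertex, so Claim \ref{claim:k}(2) yields $\xi(\mathfrak{v}) = O_{\underline{y}}(f_1^2)$. The prefactor converting $\xi(\mathfrak{v})$ into $\zeta(\mathfrak{v})$ -- either $\det(N_{F_1^c})\det(N_{F_2^c})$ or $\det(M_{T_1^c})\det(M_{T_2^c})q(F)$ -- is uniformly bounded by Lemma \ref{lem:aux1}, since $\det(M_{T^c})^2 = 1$, $\det(N_{F^c})^2 = q(F)$, and $q(F)$ is bounded in terms of the fixed external momenta $\p$. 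Multiplying $O_{\underline{y}}(f_1^2)$ by a bounded function preserves the estimate, so $\zeta(\mathfrak{v}) = O_{\underline{y}}(f_1^2)$.

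For part (2), the core remark is that every non-special vertex $\mathfrak{w}$ admits a factorization
\[
\zeta(\mathfrak{w}) = q(\mathfrak{w})\,\sigma(\mathfrak{w})\,\xi(\mathfrak{w})
\]
with $\sigma(\mathfrak{w}) \in \{-1,+1\}$ of the form $\det(M_{T_1^c})\det(M_{T_2^c})$: for $\mathfrak{w} = (T_1, T_2, F) \in \mathfrak{V}_2$ this is the very definition of $\zeta$, and for $\mathfrak{w} = (F_1, F_2, T) \in \mathfrak{V}_1$ with $F_1 \sim_v F_2$ it follows from Equation \eqref{eq:u1} applied with $T_i := F_i + e$ for any pivoting edge $e$ joining the two parts of the common partition. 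Two consequences follow. First, if $q(\mathfrak{w}) = 0$ then $\zeta(\mathfrak{w}) = 0$, so the target identity holds automatically at such vertices for any choice of $\zeta$. Second, across an edge of $\mathcal{G}$ via pivoting edge $e$, the $\mathfrak{V}_1$-end's factorization may be set up using exactly the trees $T_1, T_2$ appearing in the adjacent $\mathfrak{V}_2$-end, so $\sigma$ is preserved along the edge.

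Fix a base vertex $\mathfrak{w}_0 \in \mathcal{G}$, let $\sigma \in \{-1,+1\}$ be the common value of $\sigma(\cdot)$ propagated along $\mathcal{G}$, and set $\zeta := \sigma\, \xi(\mathfrak{w}_0)$, regarded as a function on $U$. Iterating Claim \ref{claim:trivial}(3) along a path in $\mathcal{G}$ from $\mathfrak{w}_0$ to $\mathfrak{w}$ gives $\xi(\mathfrak{w}) = \xi(\mathfrak{w}_0) + O_{\underline{y}}(f_1^2)$, whence
\[
\zeta(\mathfrak{w}) - q(\mathfrak{w})\,\zeta \;=\; q(\mathfrak{w})\,\sigma\,\bigl(\xi(\mathfrak{w}) - \xi(\mathfrak{w}_0)\bigr) \;=\; O_{\underline{y}}(f_1^2),
\]
using that $q(\mathfrak{w})\sigma$ is uniformly bounded. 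The step I expect to require most care is justifying that $\sigma$ is genuinely well-defined and path-independent on all of $\mathcal{G}$, in particular across path segments that transit through vertices with $q = 0$ (where the factorization is not uniquely determined by $\mathfrak{w}$ alone); this reduces to the edge-compatibility observation above together with a consistency check at vertices, which I will spell out via the natural matching of trees $T_i = F_i + e$ read off from the adjacency.
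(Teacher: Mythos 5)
Part (1) of your proposal is correct and matches the paper: Claim~\ref{claim:k}(2) gives $\xi(\mathfrak v) = O_{\underline y}(f_1^2)$, and you correctly observe via Lemma~\ref{lem:aux1} that the prefactor converting $\xi(\mathfrak v)$ into $\zeta(\mathfrak v)$ is uniformly bounded, so the estimate passes to $\zeta$.

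For part (2) your route genuinely differs from the paper's. The paper fixes a base vertex $\mathfrak v$, sets $\zeta := \zeta(\mathfrak v)/q(\mathfrak v)$, and propagates via Claim~\ref{claim:k2}; that relation $q(\mathfrak u)\zeta(\mathfrak v) = q(\mathfrak v)\zeta(\mathfrak u) + O_{\underline y}(f_1^2)$ only lets you solve for $\zeta$ at a neighbour after dividing by a $q$-value, so it degenerates whenever $q$ vanishes at the base or at an intermediate vertex along the path -- a case the paper's two-line proof glosses over. You instead iterate Claim~\ref{claim:trivial}(3), so that $\xi$ is constant up to $O_{\underline y}(f_1^2)$ on the whole component, and convert to $\zeta$ via the factorization $\zeta(\mathfrak w) = q(\mathfrak w)\,\sigma(\mathfrak w)\,\xi(\mathfrak w)$ with $\sigma \in \{\pm 1\}$; vertices with $q(\mathfrak w)=0$ then satisfy the claimed estimate for free. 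This is more robust and, I think, the cleaner argument.

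The step you flag but leave unfinished -- the constancy and well-definedness of $\sigma$ -- is where the gap is, and it does need an argument, not just a promise. For $\mathfrak v = (F_1,F_2,T)\in\mathfrak V_1$ with $q(F_1)=0$ there may be several cut edges $e$ making each $T_i := F_i + e$ a spanning tree, and a priori the sign $\det(M_{T_1^c})\det(M_{T_2^c})$ could depend on $e$; Equation~\eqref{eq:u1} only pins it down when $q(F_1)\neq 0$. Without this, passing $\sigma$ through a $q=0$ vertex from one $\mathfrak V_2$-neighbour to another is unjustified, and the final estimate $\zeta(\mathfrak w) - q(\mathfrak w)\zeta = O_{\underline y}(f_1^2)$ at vertices with $q(\mathfrak w)\neq 0$ is not established. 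It can be closed: the identity $\det(N_{F_1^c})\det(N_{F_2^c}) = q(F_1)\det(M_{(F_1+e)^c})\det(M_{(F_2+e)^c})$ holds for every cut edge $e$ and every external momentum $\p$, and since the right-hand sign $\det(M_{(F_1+e)^c})\det(M_{(F_2+e)^c})$ is $\p$-independent while $q(F_1)\neq 0$ for generic $\p$, the sign must in fact be independent of $e$. Some such observation should be written out before the proof is complete.
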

\begin{proof} The first assertion follows from Claim~\ref{claim:k}. 
To prove the second part, let $\mathfrak v$ be a vertex of $\mathcal G$, and choose 
$\zeta$ so that $\zeta(\mathfrak v) = q(\mathfrak v) \zeta$. The assertion now 
follows from Claim~\ref{claim:k2} and the connectivity of $\mathcal G$. 
\end{proof}

The following proposition finally allows us to prove Theorem~\ref{claim:aux}.
\begin{prop}\label{claim:final}
 Let $\mathcal G = (\mathcal V, \mathcal E)$ be a connected component of $\mathfrak G$ with vertex set 
 $\mathcal V = \mathcal V_1 \sqcup \mathcal V_2$, with $\mathcal V_i = \mathcal V \cap \mathfrak V_i$. 
 Suppose  that 
 $\mathcal G$ does not contain any special vertex. Then we have 
 \[\sum_{\mathfrak u\in \mathcal V_1}q(\mathfrak u)  =\sum_{\mathfrak w\in \mathcal V_2} q(\mathfrak w).\]
\end{prop}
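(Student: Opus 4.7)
The plan is to establish $Q_1 = Q_2$ by invoking the structural classification of the exchange graph $\mathscr H_G$ from Theorem~\ref{thm:conn}, treating separately a diagonal case and a non-diagonal case. I would first introduce the invariant $S := E(F_1) \triangle E(F_2)$ and observe that it is preserved by pivoting: if $(F_1, F_2, T)$ pivots to $(F_1 + e, F_2 + e, T - e)$ via an edge $e$, then the symmetric difference is unchanged, and similarly $E(T_1) \triangle E(T_2) = S$ for any adjacent $\mathcal V_2$-vertex. Hence $S$ is a well-defined invariant of $\mathcal G$, and whether $S = \emptyset$ or $S \neq \emptyset$ determines whether we are in the diagonal or non-diagonal case.

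In the diagonal case $S = \emptyset$, every vertex of $\mathcal G$ is of the form $(F, F, T) \in \mathcal V_1$ or $(T, T, F) \in \mathcal V_2$, and the projection $\rho(F, F, T) = (F, T)$, $\rho(T, T, F) = (T, F)$ is a graph isomorphism identifying $\mathcal G$ with a connected component $\mathscr H_0$ of $\mathscr H_G$. By Theorem~\ref{thm:conn}, the vertices of $\mathscr H_0$ are parameterized by the outer pair $(\widetilde F, \widetilde T)$ of spanning 2-forest and tree in the contracted graph $\widetilde G_0$, together with the fixed inner trees $T_{j,1}, T_{j,2}$ on each saturated component $X_j$. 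The partition $\mathcal P(F)$ of the 2-forest appearing in either $\mathscr V_{0,1}$ or $\mathscr V_{0,2}$ is determined entirely by the outer $\widetilde F$ (since each $T_{j,i}$ connects $X_j$ into a single part), so $q(F)$ depends only on $\widetilde F$. Both $Q_1$ and $Q_2$ therefore reduce to the same sum $\sum_{(\widetilde F, \widetilde T)} q(\widetilde F)$ over disjoint-edge pairs in $\widetilde G_0$, and the desired equality follows.

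In the non-diagonal case $S \neq \emptyset$, I would reduce to the diagonal case on a quotient graph. The non-special condition requires $S \cap E(\mathcal P(F)) = \emptyset$ for every $(T_1, T_2, F) \in \mathcal V_2$, so every edge of $S$ lies within a single part of $\mathcal P(F)$; combined with $S \subseteq E(F_1) \cup E(F_2)$ being within the parts of $\mathcal P(F_1) = \mathcal P(F_2)$, this means that the connected components $C_1, \dots, C_\ell$ of $(V, S)$ each lie inside one part of every partition appearing in $\mathcal G$. Contracting each $C_i$ to a single vertex produces a quotient graph $\bar G$ in which $\bar F_1 = \bar F_2$ (since $F_1, F_2$ differ only by contracted $S$-edges) and $\bar T_1 = \bar T_2$, so the contracted images $(\bar F, \bar F, \bar T)$ and $(\bar T, \bar T, \bar F)$ are diagonal. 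The plan is to exhibit a projection $\mathcal G \to \bar{\mathcal G}$ onto a diagonal component of $\mathfrak G_{\bar G}$ such that $q$ factors through the projection, and to argue via Theorem~\ref{thm:conn} applied to the induced subgraphs $G[C_i]$ that the fibers over $\bar{\mathcal V}_1$-vertices and $\bar{\mathcal V}_2$-vertices have matching cardinalities. The diagonal-case equality $\bar Q_1 = \bar Q_2$ then lifts to $Q_1 = Q_2$.

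The main obstacle will be the rigorous treatment of the contraction in the non-diagonal case: verifying that $\bar F$ and $\bar T$ give valid spanning 2-forest and tree structures in $\bar G$ after accounting for the loops created by within-$C_i$ common edges, and confirming that the fiber-counts are indeed balanced on the two sides. If this contraction approach proves too intricate, an alternative is to refine Proposition~\ref{claim:final} to a pair-by-pair identity: using the polarization $q(F) = -\sum_{\{u,v\} \in E(\mathcal P(F))} \p_u \cdot \p_v$, the claim becomes that for every pair $\{u, v\} \subset V$, the number of $\mathcal V_1$-vertices whose partition separates $u$ and $v$ equals the number of such $\mathcal V_2$-vertices, which might be amenable to a direct bijection using the $u$--$v$ path structure in the underlying spanning trees.
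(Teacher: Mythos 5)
The strategy you outline is in fact essentially the one the paper uses: identify a contraction of $G$ under which the triples collapse to pairs, argue that the resulting map is an isomorphism onto the exchange graph of the contracted graph, and then exploit the swap $(F,T)\leftrightarrow(T,F)$. Your invariant $S=E(F_1)\triangle E(F_2)$, its preservation under pivoting, and the observation that the non-special hypothesis forces $S$ to consist entirely of edges interior to every partition are all sound and correspond to the paper's analysis. Your diagonal case ($S=\emptyset$) is in substance complete: $\rho$ identifies $\mathcal G$ with a component $\mathscr H_0$ of $\mathscr H_G$, Theorem~\ref{thm:conn}(2) gives the parametrization by pairs in the contracted graph $\widetilde G_0$ together with the fixed inner trees, and since $q$ depends only on the contracted $2$-forest, the sums over $\mathscr V_{0,1}$ and $\mathscr V_{0,2}$ agree via the swap in the connected graph $\mathscr H_{\widetilde G_0}$.

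The non-diagonal case, however, has a genuine gap that you partially acknowledge but understate as a ``main obstacle.'' To contract each component $C_i$ of $(V,S)$ and obtain valid spanning $2$-forests and trees in $\bar G$, you must know that each of $F_1$, $F_2$, $T_1$, $T_2$, $T$, $F$ restricts to a \emph{tree} on $C_i$ (otherwise contraction introduces cycles), and that the edge sets of $T_1$ and $T_2$ agree outside the $C_i$'s, so that $\bar T_1=\bar T_2$ as subgraphs rather than just as multisets. This is precisely what the paper establishes via the equivalence relation $\equiv$ and the resulting fixed inner trees $\tau_{i,1},\tau_{i,2},\tau_{i,3}$ (see the discussion preceding Claim~\ref{claim:stab}), together with Claim~\ref{claim:stab} itself for the ``outer'' edges. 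Nothing in your argument up to that point delivers connectivity of $F_1[C_i]$; you would need the full apparatus of the paper's $\equiv$-analysis (which in fact shows $C_i=X_i$ and $F_1[X_i]=\tau_{i,1}$, a tree), and you would also need the analogue of Proposition~\ref{claim:conn} to see that the contracted exchange graph is connected so that the swap stays inside one component. Note also that the paper does not split into cases: the relation $\equiv$ handles both at once, with the diagonal case corresponding exactly to all equivalence classes being singletons. So what you have is the right idea but with the central structural lemma --- the core of the paper's argument --- left unproved; the ``fiber-count'' issue you flag is in reality the assertion that $\pi$ is a bijection, which again requires that lemma. Your alternative polarization idea is a genuinely different and potentially interesting route, but it is not developed enough to assess.
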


We will give the proof of this proposition in the next section. Let us first explain how to deduce Theorem~\ref{claim:aux} assuming this result.
\begin{proof}[Proof of Theorem~\ref{claim:aux}] 
We have to show that $g_2f_1 - g_1 f_2 = O_{\underline y}(f_1^2)$. 
 Let $\mathcal G_1 = (\mathcal V_1, \mathcal E_1), \dots, \mathcal G_N=(\mathcal V_N, \mathcal E_N)$ be all the connected components of $\mathfrak G$. \
 For each $i=1,\dots, N$,  denote by $\mathcal V_{i,1}$ $\mathcal V_{i,2}$ the intersection of 
 $\mathcal V_i$ with $\mathfrak V_1$ and $\mathfrak V_2$ respectively. 
 Using Equations~\eqref{eq:us1} and~\eqref{eq:us2}, we can write 
 \begin{align*}
  g_2f_1 - g_1 f_2 &= \sum_{\mathfrak v\in \mathfrak V_1} \zeta(\mathfrak v)- \sum_{\mathfrak u\in \mathfrak V_2}\zeta(\mathfrak u)\\
  &=\sum_{i=1}^N \Bigl(\sum_{\mathfrak v\in \mathcal V_{i,1}} \zeta(\mathfrak v) - \sum_{\mathfrak u\in 
  \mathcal V_{i,2}} \zeta(\mathfrak u)\Bigr).  
 \end{align*}
 For each $1\leq i\leq N$, we have the following two possibilities. Either, $\mathcal G_i$ 
 contains a special vertex, in which case we have $\zeta(\mathfrak w) =O_{\underline y}(f_1^2)$ 
 for all $\mathfrak w\in \mathcal V(\mathcal G_i)$. In particular,
 \[ \sum_{\mathfrak v\in \mathcal V_{i,1}} \zeta(\mathfrak v) - \sum_{\mathfrak u\in 
 \mathcal V_{i,2} }\zeta(\mathfrak u) = O_{\underline y}(f_1^2).\]
 Or $\mathcal G_i$ does not contain any special vertex, in which case, applying
  Corollary~\ref{cor:hf} and Proposition~\ref{claim:final}, we must have 
 \begin{align*}
 \sum_{\mathfrak v\in \mathcal V_{i,1}} \zeta(\mathfrak v) - \sum_{\mathfrak u\in \mathcal V_{i,2}} 
 \zeta(\mathfrak u) & = \zeta \Bigl( \sum_{\mathfrak v\in \mathcal V_{i,1}} q(\mathfrak v) - 
 \sum_{\mathfrak u\in \mathcal V_{i,2}} q(\mathfrak u)\Bigr) + O_{\underline y}(f_1^2)\\
 &=  O_{\underline y}(f_1^2).
 \end{align*}
 Thus, $g_2f_1 - g_1 f_2 =O_{\underline y}(f_1^2)$ and the theorem follows.
\end{proof}

\subsection{Proof of Proposition~\ref{claim:final}} 
 Recall that for a partition $\mathcal P$ of $V$ into sets $X_1,\dots, X_k$, we denote by 
 $E(\mathcal P)$ the set of all edges in $G$ with end-points lying  in two different sets among $X_i$s. 
 For a spanning $2$-forest $F$, the partition of $V$ into the vertex sets of the two connected components of  $F$ is as before denoted by
 $\mathcal P(F)$. 

\medskip

Let $\mathcal G$ be a connected component of $\mathfrak G$ 
which does not contain any special vertex. Let $\mathcal V = \mathcal V_1 \sqcup \mathcal V_2$ 
be the vertex set of $\mathcal G$ with $\mathcal V_i \subset \mathfrak V_i$, for $i=1,2$.
We will give a complete description of the structure of $\mathcal G$ using the structure theorem we proved for the exchange 
graph, which in particular allows to prove Proposition~\ref{claim:final}. 
\medskip

Define equivalence relations $\equiv_1, \equiv_2, \equiv_3$ on the set of vertices $V$ of $G$ as follows. For two vertices $u, v \in V$,
\begin{itemize}
 \item  we say $u \equiv_1 v$ if for any $(T_1, T_2, F)\in \mathcal V_2$, 
 both vertices $u$ and $v$ lie in the same connected component of $T_1 \setminus E(\mathcal P(F))$. 
 \end{itemize}
 Similarly, 
 \begin{itemize}
\item we say $u \equiv_2 v$ if for any $(T_1, T_2, F)\in \mathcal V_2$, both vertices $u$ and $v$ lie in the same connected component of $T_2 \setminus E(\mathcal P(F))$. 
\end{itemize}
And finally,
 \begin{itemize}
\item we say $u \equiv_3 v$ if for any $(F_1, F_2, T)\in \mathcal V_1$, both vertices $u$ and $v$ lie in the same connected component of $T \setminus E(\mathcal P(F_1))$. 
\end{itemize}

Note that since $\mathcal G$ does not contain any special vertex, we have $F_1 \sim_v F_2$ for all 
$(F_1, F_2, T)\in \mathcal V_1$. In particular, 
$T \setminus E(\mathcal P(F_1)) = T \setminus E(\mathcal P(F_2))$.

The following statements are analogous to the statements of Lemma~\ref{lem:a1} and Claim~\ref{claim:a2} for the exchange graph. 
\begin{lem}
Let $F$ be a spanning 2-forest in $G$. Let $T$ be a spanning tree of $G$. Suppose two vertices 
$u, v\in V$ are in two different connected components of  $T \setminus E(\mathcal P(F))$. There exists and edge $e\in E(\mathcal P(F)) \,\cap\, E(T_1)$ such that $u$ and $v$ are not connected in $T-e$.
\end{lem}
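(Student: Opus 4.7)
My plan is to mirror the short argument used in the proof of Lemma~\ref{lem:a1}, reading the ``$T_1$'' appearing in the conclusion as the same spanning tree as the ``$T$'' in the hypothesis. This matches the surrounding discussion, where vertices of $\mathcal V_2$ are written $(T_1, T_2, F)$ and this lemma is meant to be applied with the spanning tree $T_1$ of such a tuple in the role of the hypothesized spanning tree. Under this reading, the content of the statement is the natural one: whenever a spanning tree separates $u$ and $v$ after deleting its intersection with the cut $E(\mathcal P(F))$, one of those cut-edges of the tree already separates $u$ from $v$ on its own.

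The proof I have in mind is the following. First, let $S_u$ and $S_v$ denote the connected components of $T \setminus E(\mathcal P(F))$ containing $u$ and $v$ respectively; by hypothesis $S_u \neq S_v$. Next, because $T$ is a spanning tree, there is a unique path $P \subseteq T$ joining $u$ to $v$, and all edges of $P$ lie in $E(T)$. Since $P$ starts in $S_u$ and ends in $S_v$, while no edge of $T \setminus E(\mathcal P(F))$ can leave a component of $T \setminus E(\mathcal P(F))$, at least one edge $e$ of $P$ must belong to $E(\mathcal P(F))$. Thus $e \in E(T) \cap E(\mathcal P(F))$, which is the desired membership (with $T_1 = T$).

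To conclude that $u$ and $v$ are disconnected in $T - e$, I would invoke the standard tree property: removing any edge of a tree splits it into exactly two connected components, and removing an edge lying on the unique $u$--$v$ path places $u$ and $v$ in opposite components. Applied to the edge $e \in E(P)$ just produced, this gives $u$ and $v$ in different connected components of $T - e$, completing the argument.

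There is essentially no technical obstacle; this is the same two-line argument as in Lemma~\ref{lem:a1}, transported verbatim to the present context. The only genuine subtlety is notational, namely justifying that the ``$T_1$'' in the conclusion must refer to the same spanning tree as the ``$T$'' in the hypothesis, since interpreting $T_1$ as a connected component of the $2$-forest $F$ would force $E(\mathcal P(F)) \cap E(T_1) = \emptyset$ and render the statement vacuous. I would flag this at the start of the proof and then proceed as above.
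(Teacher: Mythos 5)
Your proof is correct and essentially identical to the paper's: both pick an edge $e \in E(\mathcal P(F))$ on the $T$-path between $S_u$ and $S_v$ (you use the $u$--$v$ path directly, the paper a path between the two components, which amounts to the same thing) and invoke the fact that deleting a tree edge on the $u$--$v$ path separates them. You also correctly identify the ``$T_1$'' in the statement as a typo for ``$T$''.
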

\begin{proof}
Denote by $S_u$ and $S_v$ the two connected components of $T \setminus E(\mathcal P(F))$ which contain $u$ and $v$, respectively. There is a path joining $S_u$ to $S_v$ in $T$. Since $S_u \neq S_v$, it contains an edge $e \in E(\mathcal P(F))$. For such an edge $e$, $u$ and $v$ are not connected in $T-e$. 
\end{proof}
The previous lemma allows to prove the following claim.
\begin{claim}
 The three equivalence relations $\equiv_1$, $\equiv_2$, $\equiv_3$ are the same. 
\end{claim}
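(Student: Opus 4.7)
The plan is to mimic the proof of Claim~\ref{claim:a2} for the exchange graph $\mathscr H$, the only new ingredient being the absence of special vertices in $\mathcal G$, which is exactly what ensures that the pivots used in the contradiction arguments stay inside $\mathcal G$. It suffices to prove $\equiv_1 \,=\, \equiv_3$ and $\equiv_2 \,=\, \equiv_3$; the proof of the second equality is obtained from that of the first by interchanging the roles of $T_1$ and $T_2$, so I will describe only $\equiv_1 \,=\, \equiv_3$.

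To prove $\equiv_1 \,=\, \equiv_3$ I would argue by contradiction in each of the two directions. First suppose $u \equiv_1 v$ but $u \not\equiv_3 v$, and pick $(F_1, F_2, T) \in \mathcal V_1$ such that $u$ and $v$ lie in different connected components of $T \setminus E(\mathcal P(F_1))$. Applying the preceding lemma, choose an edge $e \in E(T) \cap E(\mathcal P(F_1))$ separating $u$ from $v$ in $T - e$. Since $\mathcal G$ contains no special vertex of $\mathcal V_1$, we have $F_1 \sim_v F_2$, so $\mathcal P(F_1) = \mathcal P(F_2)$, and consequently both $F_1 + e$ and $F_2 + e$ are spanning trees. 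We may therefore pivot at $e$ to obtain an adjacent vertex $(T_1, T_2, F) \in \mathcal V_2$ of $\mathcal G$, with $F = T - e$ and $T_i = F_i + e$. By construction $u$ and $v$ lie in different components of $F$, hence in different elements of $\mathcal P(F)$, and so in different components of $T_1 \setminus E(\mathcal P(F))$, contradicting $u \equiv_1 v$.

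For the converse, suppose $u \equiv_3 v$ and $u \not\equiv_1 v$. Pick $(T_1, T_2, F) \in \mathcal V_2$ witnessing $u \not\equiv_1 v$ and, using the lemma, an edge $e \in E(T_1) \cap E(\mathcal P(F))$ separating $u$ from $v$ in $T_1 - e$. Since $F + e$ is a spanning tree and $\mathcal G$ has no special vertex of $\mathcal V_2$, the edge $e$ cannot lie in $E(T_1) \setminus E(T_2)$, so $e \in E(T_2)$ as well. Pivoting at $e$ then produces a vertex $(F_1, F_2, T) \in \mathcal V_1$ of $\mathcal G$ with $T = F + e$ and $F_i = T_i - e$. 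Here $u$ and $v$ sit in different components of $F_1$, hence in different elements of $\mathcal P(F_1)$, and therefore in different components of $T \setminus E(\mathcal P(F_1))$, contradicting $u \equiv_3 v$.

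The only real obstacle is bookkeeping: one must check at each pivot that the absence of special vertices is exactly the condition allowing the pivot to exist in $\mathfrak G$ and hence to stay inside $\mathcal G$, namely $\mathcal P(F_1) = \mathcal P(F_2)$ for tuples in $\mathcal V_1$ (so that both $F_1 + e$ and $F_2 + e$ are trees) and $e \in E(T_1) \Leftrightarrow e \in E(T_2)$ whenever $F + e$ is a tree, for tuples in $\mathcal V_2$ (so that both $T_1 - e$ and $T_2 - e$ are $2$-forests).
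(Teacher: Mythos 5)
Your proof is correct and takes essentially the same approach as the paper: pivoting at a separating edge furnished by the preceding lemma, and using the absence of special vertices in $\mathcal G$ to ensure the pivot produces a vertex of $\mathcal G$. The organization differs slightly (you prove $\equiv_1\,=\,\equiv_3$ in both directions and then deduce $\equiv_2\,=\,\equiv_3$ by symmetry, whereas the paper proves $\equiv_1\,=\,\equiv_2$ directly and then one direction of $\equiv_1\,=\,\equiv_3$), and you make explicit the verification that the pivoting edge $e$ lies in both trees, a check the paper leaves implicit.
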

\begin{proof} To prove that $\equiv_1$ and $\equiv_2$ are the same, suppose for the sake of a contradiction that $u \equiv_1 v$ but $u\not\equiv_2 v$ for two vertices $u$ and $v$ in $V$.
 This implies the existence of $(T_1, T_2, F)\in \mathcal V_2$ such that 
\begin{itemize}
\item  the two vertices $u$ and $v$ are both in $X$ with $\mathcal P(F) = \{X,X^c\}$.
\item $u$ and $v$ are in the same connected component of $T_1[X]$, and they are in two different connected components of $T_2[X]$.
\end{itemize} 
Applying the previous lemma, there exists an edge $e\in E(T_2)\cap E(\mathcal P(F))$ such that $u$ and $v$ lie in two different connected components of $F_2 = T_2 -e$. Since 
$(T_1, T_2, F)$ is not special, and $e\in E(\mathcal P(F))$, we have $e\in T_1$. 
In particular, $u,v$ are in the same connected component of $F_1 = T_1 -e$. We have proved that
$\mathcal P(F_1)\neq \mathcal P(F_2)$, i.e., the tuple  $(F_1, F_2, T)$ obtained from $(T_1, T_2, F)$
by pivoting involving $e$ is special. This contradicts the assumption on $\mathcal G$ (that it does not contain special vertices), and proves our claim. 

\medskip
We now prove that $\equiv_1$ and $\equiv_3$ are similar. 
Suppose for the sake of a contradiction that this is not the case.  Let $u,v\in V$ be two vertices with  $u\equiv_3 v$ but $u\not\equiv_1 v$ (the other case $u\not \equiv_3 v$ but $u\equiv_1 v$ has a similar treatment that we omit). This implies the existence of $(T_1, T_2, F) \in \mathcal V_2$ such that $u,v$ belong to two different connected components of 
$T_1 \setminus E(\mathcal P(F))$. Applying the previous lemma, we infer the existence of an edge 
$e\in E(T_1)\,\cap\, E(\mathcal P(F))$ such that $u$ and $v$ are not connected in $T_1-e$. 
Pivoting involving $e$ gives a tuple $(F_1, F_2, T)$ such that $u$ and $v$ lie in two different 
connected components of $F_1$. In particular, it follows that $u \not\equiv_3 v$, which is a contradiction.
This proves the claim.
\end{proof}
We denote by $\equiv$ the equivalence relation on vertices induced by $\equiv_i$. As in Remark~\ref{rem:uf1}, we have the following

\begin{remark}\label{rem:uf}\rm Note that if $u$ and $v$ are two vertices with $u\not \equiv v$, there exists $(F_1, F_2, T) \in \mathcal V_1$ 
such that $u$ and $v$ lie in different connected components of $F_i$. Similarly, there exists 
$(T_1, T_2, F) \in \mathcal V_2$ such that $u,v$ lie in different connected components of $F$. 
\end{remark}

 Denote by $\mathcal P_{\equiv} =\{X_1, \dots, X_n\}$ the partition of $V$ induced by 
 the equivalence classes $X_i$ of $\equiv$. Note that pivoting in $\mathcal G$ 
 only involves edges in $E \setminus E(\mathcal P_\equiv)$, i.e, which are not contained in any 
 $X_1, \dots, X_n$. 
 By connectivity of $\mathcal G$, it follows that for each $i$, there are three trees 
 $\tau_{i,1}, \tau_{i,2}, \tau_{i,3}$ on the vertex set $X_i$ such that for any 
 $(F_1, F_2, T) \in \mathcal V_1$ and any $(T_1, T_2, F) \in \mathcal V_2$, 
 we have 
 $$T_1[X_i] = F_1[X_i] =\tau_{i,1}, \,\, T_2[X_i] = F_2[X_i] =\tau_{i,2},\,\, T[X_i] =F[X_i] =\tau_{i,3}.$$
 In other words, the subtrees $\tau_{i,1}, \tau_{i,2}, \tau_{i,3}$ are the "constant" part of the elements 
 in $\mathcal G$. We now prove
\begin{claim} \label{claim:stab} For any $(T_1, T_2, F)\in \mathcal V_2$, we have 
$$T_1 \setminus \Bigl(\bigcup_{i=1}^n E(\tau_{i,1})\Bigr) = T_2 \setminus \Bigl(\bigcup_{i=1}^n 
E(\tau_{i,2})\Bigr).$$
In other words, the edges of $T_1$ and $T_2$ outside $X_i$'s are the same. 
\end{claim}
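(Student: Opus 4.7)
My plan is to identify a simple invariant of the pivoting dynamics on $\mathcal G$ and then reach a contradiction via Remark~\ref{rem:uf}. The key observation I would use is that along any edge of $\mathfrak G$ between $(F_1, F_2, T) \in \mathfrak V_1$ and $(T_1, T_2, F) \in \mathfrak V_2$ involving the edge $e$, the pivoting rule $F_i = T_i - e$ together with the fact that $e$ lies in $T_1 \cap T_2$ (so that $T_i - e$ is indeed a $2$-forest) forces $F_1 \triangle F_2 = T_1 \triangle T_2$; symmetrically, the edge added when passing from $\mathfrak V_1$ to $\mathfrak V_2$ lies outside $F_1 \cup F_2$, so the same equality is preserved. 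Combined with the connectivity of $\mathcal G$, this gives a fixed subset $\Delta \subseteq E$ equal to the symmetric difference of the first two coordinates at every vertex of $\mathcal G$.

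I would next reduce the claim to showing $\Delta \cap E(\mathcal P_\equiv) = \emptyset$. Indeed, by the stability of $\tau_{i,1}$ and $\tau_{i,2}$ already established for vertices of $\mathcal G$, the edges of $T_1$ and $T_2$ lying inside any $X_i$ coincide with $E(\tau_{i,1})$ and $E(\tau_{i,2})$ respectively, so the identity $T_1 \setminus \bigcup_i E(\tau_{i,1}) = T_2 \setminus \bigcup_i E(\tau_{i,2})$ is equivalent to saying that no edge of $\Delta = T_1 \triangle T_2$ crosses between two distinct classes of $\mathcal P_\equiv$.

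The main step is then a contradiction argument. Assuming some $e \in \Delta \cap E(\mathcal P_\equiv)$ exists, let its endpoints be $u \in X_i$ and $v \in X_j$ with $i \neq j$; since $u \not\equiv v$, Remark~\ref{rem:uf} furnishes a vertex $(T_1', T_2', F') \in \mathcal V_2$ in which $u$ and $v$ lie in different components of the spanning $2$-forest $F'$. Consequently $e$ joins the two components of $F'$ and $F' + e$ is a spanning tree of $G$. But by the invariance above, $e \in T_1' \triangle T_2'$, so $(T_1', T_2', F')$ satisfies the definition of a special vertex, contradicting the hypothesis that $\mathcal G$ contains none. The main obstacle, to my mind, is simply spotting the invariance $T_1 \triangle T_2 = F_1 \triangle F_2$ along pivoting; its verification is short once one notes that pivoting edges lie in $T_1 \cap T_2$ while pivoting back adds an edge outside $F_1 \cup F_2$. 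Beyond that the proof is a direct invocation of Remark~\ref{rem:uf} and the definition of special, so I do not anticipate further combinatorial difficulty.
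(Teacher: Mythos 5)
Your proof is correct and follows essentially the same route as the paper: invoke Remark~\ref{rem:uf} to find a vertex $(T_1',T_2',F')$ separating $u$ and $v$, note that $F'+e$ is then a spanning tree, and derive a contradiction with the absence of special vertices. The one place you tidy things up is by isolating the invariant $T_1\triangle T_2 = F_1\triangle F_2$ up front, which lets you jump directly from $e\in T_1\triangle T_2$ to $e\in T_1'\triangle T_2'$; the paper instead reaches the same conclusion by first arguing $e\in T_1'$ (via constancy of $E(T_1)\cup E(F)$ under pivoting), then $e\in T_2'$ (non-specialness), then propagating back. Same ideas, slightly leaner bookkeeping on your side.
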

\begin{proof}
 Let $e=\{u,v\}$ be an edge of $T_1$ with $u$ and $v$ lying in two different equivalence 
 classes $X_i$ and $X_j$.  By Remark~\ref{rem:uf}, there exists $(T'_1,T'_2, F' ) \in \mathcal V_1$ 
 such that $u$ and $v$  belong to two different  sets of the partition $\mathcal P(F)$. 
 By connectivity of $\mathcal G$, since $e\notin F'$, we have $e\in T_1'$.
 Since $(T'_1,T'_2, F' )$ is non special,  we infer $e \in T_2'$. By connectivity of $\mathcal G$, 
 and the way the edges are defined (which requires pivoting involving the same edge for the two trees 
 in the vertices of $\mathcal V_2$), we must have $e\in T_2$, and the claim follows. 
\end{proof}

Let $\mathfrak v = (T_1, T_2, F) \in \mathcal V_2$. Let 
\begin{align*}E_{1,2}(\mathfrak v) &:= E(T_1) \cap E(\mathcal P_\equiv) = E(T_2) \cap E(\mathcal P_\equiv),\qquad \textrm{and}\\
E_3(\mathfrak v) &:= E(F)\cap E(\mathcal P_\equiv).
\end{align*}
 Obviously, we have 
\begin{align*} 
&E(T_1) = E_{1,2}(\mathfrak v) \sqcup \bigsqcup_{i=1}^n E(\tau_{i,1}) \,\, , \,\, E(T_2) = E_{1,2}(\mathfrak v) \sqcup \bigsqcup_{i=1}^n E(\tau_{i,2})\,\,, \,\, \textrm{and}\\&E(F) = E_3 (\mathfrak v)\cup  \bigcup_{i=1}^n E(\tau_{i,1}).
\end{align*}
Define the multiset 
$$E_{\mathcal G} := E_{1,2}(\mathfrak v) \sqcup E_3(\mathfrak v).$$
 By the definition of the edges in the graph $\mathfrak G$, and connectivity of (the connected component) $\mathcal G$,  
 $E_{\mathcal G}$ is independent of the choice of  $\mathfrak v \in \mathcal V_2$. 
 In addition, if for $\mathfrak u \in \mathcal V_1$, we define $E_{12}(\mathfrak u) = 
 E(F_1) \cap E(\mathcal P_\equiv) $, and $E_{3}(\mathfrak u) = E(T) \cap E(\mathcal P_\equiv)$, 
 we should have  $E_{\mathcal G} = E_{1,2}(\mathfrak u) \sqcup E_3(\mathfrak u)$. 

\medskip

Define an (auxiliary) multigraph $G_0=(V_0, E_0)$ obtained by contracting each equivalence class 
$X_i$ to a vertex $x_i$ and having the multiset of edges $E_0 = E_{\mathcal G}$. 
More precisely, $G_0$ has the vertex set $V_0 = \{x_1, \dots, x_n\}$, and an edge $\{x_i,x_j\}$ 
for any edge $e = \{u,v\}$ in the multiset $E_{\mathcal G}$ which joins a vertex $u\in X_i$ to a vertex 
$v\in X_j$. By an abuse of the notation, we  identify $E_0$ with $E_{\mathcal G}$.

Each $\mathfrak v = (T_1, T_2, F)\in \mathcal V_2$ gives a pair 
$(T_{\mathfrak v}, F_{\mathfrak v})$  that we denote by $\pi(\mathfrak v)$ consisting of a spanning tree 
$T_{\mathfrak v}$ of $G_0$ with edges $E_{1,2}(\mathfrak v)$  and  a spanning 2-forest $F_{\mathfrak v}$ 
of $G_0$ with edge set $E_3(\mathfrak v)$.
 As a multiset, we have $E_0 = E(T_{\mathfrak v}) \sqcup E( F_{\mathfrak v})$.
 Similarly, each $\mathfrak u = (F_1, F_2, T)\in \mathcal V_1$ gives a pair 
 $\pi(\mathfrak u) = (F_{\mathfrak v}, T_{\mathfrak v})$ consisting of a spanning 2-forest 
 $F_{\mathfrak v}$ and a spanning tree $T_{\mathfrak v}$ of $G_0$ with edge sets $E_{1,2}(\mathfrak u)$ 
 and  $E_3(\mathfrak u)$, respectively.

\medskip

We will describe $\mathcal G$ in terms of the multigraph $G_0$.  Let $\mathscr H_0 = 
(\mathscr V_0, \mathscr E_0)$ be the exchange graph associated to the multigraph $G_0$ as in 
Section~\ref{sec:exchange}.
Recall that  the vertex set  $\mathscr V_0$ of $\mathscr H_0$ is the disjoint union of 
two sets $\mathscr V_{0,1}$ and $\mathscr V_{0,2}$, where 
$$\mathscr V_{0,1} := \Bigl\{\, (F, T) \,\big|\,\, F \in \forest(G_0), T\in \tree(G_0), \,\,  E(F) \sqcup E(T) = E_0\,\Bigr\},$$ and 
$$\mathscr V_{0,2} := \Bigl\{\,(T, F) \,\big|\,\, T \in \tree(G_0), F\in \forest(G_0), \,\,  E(F) \sqcup E(T) = E_0\,\Bigr\}.$$
There is an edge in $\mathscr E_0$ connecting $(F, T)\in \mathscr V_{0,1}$ to $(T', F')\in\mathscr V_{0,2}$  if $(T', F')$ \emph{is obtained from $(F, T)$ by pivoting involving an edge} $e\in E_0$,  i.e., if $F  =  T' - e$ and $F' = T -e$.

\medskip

With this notation, we get an application $\pi: \mathcal V \rightarrow \mathscr V_0$. By what we have proved so far, it is clear that $\pi$ is injective. By the definition of edges in $\mathfrak G$ and $\mathscr H_0$, $\pi$ induces a homomorphism of graphs $\pi: \mathcal G \rightarrow \mathscr H_0$. in addition, any pivoting in $\mathscr H_0$ involving an edge $e \in E_0 = E_{\mathcal G}$ 
can be lifted to pivoting involving the same edge $e$ in $\mathcal G$. This proves that $\pi$ induces an isomorphism onto (its image) a connected component of $\mathscr H_0$.

\begin{prop}\label{claim:conn} \label{claim:iso}
The exchange graph $\mathscr H_0$ is connected. As a consequence, the projection map $\pi$ is an isomorphism. 
\end{prop}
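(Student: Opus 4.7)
The plan is to verify the two hypotheses of Theorem~\ref{thm:conn}(1) for the multigraph $G_0$: (i) the edge multiset $E_0$ decomposes as the disjoint union of the edges of a spanning tree and a spanning $2$-forest of $G_0$, and (ii) every nonempty subset $Y \subset V_0$ saturated with respect to $G_0$ is a singleton. Condition (i) is immediate from the construction: for any $\mathfrak v = (T_1, T_2, F) \in \mathcal V_2$, the pair $\pi(\mathfrak v) = (T_{\mathfrak v}, F_{\mathfrak v})$ furnishes the required decomposition $E_0 = E(T_{\mathfrak v}) \sqcup E(F_{\mathfrak v})$. Once (ii) is also established, Theorem~\ref{thm:conn}(1) gives connectedness of $\mathscr H_0$, and then since the text has already shown that $\pi$ is an injective graph homomorphism whose image is a connected component of $\mathscr H_0$, this image must be all of $\mathscr H_0$, so $\pi$ becomes an isomorphism.

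The heart of the proof is condition (ii), which I would handle by contradiction. Suppose $Y = \{x_{i_1}, \dots, x_{i_k}\} \subset V_0$ is saturated with $k \geq 2$, and set $X := X_{i_1} \cup \dots \cup X_{i_k} \subset V$. The aim is to show that all vertices of $X$ lie in a single $\equiv$-equivalence class, which contradicts the fact that $X_{i_1}, \dots, X_{i_k}$ are distinct classes of $\equiv$ in $V$.

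Fix an arbitrary $\mathfrak v = (T_1, T_2, F) \in \mathcal V_2$. Since $G_0[Y]$ has precisely $2|Y|-2$ edges and $T_{\mathfrak v}[Y]$, $F_{\mathfrak v}[Y]$ are disjoint acyclic subgraphs whose union is $G_0[Y]$, both of them must be spanning trees of $Y$. Lifting this information back to $G$, the edges of $T_1$ with both endpoints in $X$ split into the ``internal'' edges $\bigsqcup_{x_i \in Y} E(\tau_{i,1})$ (contributing $\sum_{x_i \in Y}(|X_i|-1)$ edges) and the ``crossing'' edges of $E_{1,2}(\mathfrak v)$ with both endpoints in $X$, which are in bijection with the $|Y|-1$ edges of $T_{\mathfrak v}[Y]$. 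The total edge count is $|X|-1$, and since $T_1[X]$ is acyclic, it must be a spanning tree of $X$. In parallel, $F_{\mathfrak v}[Y]$ being connected forces $Y$ to lie in a single connected component of the spanning $2$-forest $F_{\mathfrak v}$ of $G_0$, which, lifted to $G$, means that $X$ is entirely contained in one of the two parts of $\mathcal P(F)$.

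Combining these two observations, every edge of $T_1[X]$ has both endpoints in the same part of $\mathcal P(F)$, hence $T_1[X] \cap E(\mathcal P(F)) = \emptyset$ and $T_1[X] \setminus E(\mathcal P(F)) = T_1[X]$ remains a spanning tree of $X$. In particular all vertices of $X$ lie in a single connected component of $T_1 \setminus E(\mathcal P(F))$, and since $\mathfrak v \in \mathcal V_2$ was arbitrary, $u \equiv_1 v$, and therefore $u \equiv v$, for all $u,v \in X$. This contradicts $k \geq 2$, completing the verification of (ii). The delicate step I expect to watch most carefully is the transport between the saturated structure on $G_0$ and the induced structure in $G$: both the edge count certifying that $T_1[X]$ is a spanning tree of $X$, and the containment of $X$ in a single part of $\mathcal P(F)$, rest on correctly translating the saturated data across $\pi$.
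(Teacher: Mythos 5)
Your proof is correct and follows the same overall route as the paper: verify conditions (i) and (ii) of Theorem~\ref{thm:conn}(1) for $G_0$, establishing (ii) by showing that a saturated subset of $V_0$ of size $\geq 2$ would merge distinct $\equiv$-classes $X_i$, contradicting their maximality. Your verification of (ii) is more explicit than the paper's terse ``no pivoting involves the edge set of a saturated subset'' remark — the edge count for $T_1[X]$ together with the connectedness of $F_{\mathfrak v}[Y]$ placing $X$ inside one part of $\mathcal P(F)$ directly show $X$ connected in $T_1\setminus E(\mathcal P(F))$ for every $(T_1,T_2,F)\in\mathcal V_2$ — but both arguments arrive at the same merging and hence the same contradiction.
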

\begin{proof} 
By the discussion preceding the proposition,  we only need to show that $\mathscr H_0$ is connected. 
Since the multigraph $G_0$ is disjoint union of a spanning tree and a spanning forest, 
this latter statement  follows from the first part of Theorem~\ref{thm:conn} by observing that 
the only saturated non-empty subset of vertices of $G_0$ are 
singletons. To see this, note that no pivoting involves 
the edge set of a saturated subset of vertices. So for a saturated subset of vertices $S$ of $V_0$, all the components $X_i$ of the partition $\mathcal P_\equiv$ associated to the vertices $x_i\in S$ must lie in the same equivalence class of $\equiv$. However, $X_i \in \mathcal P_\equiv$ are already all the equivalence classes of $\equiv$, so we must have 
$|S|=1$. 
\end{proof}

We can now prove Proposition~\ref{claim:final}.

\begin{proof}[Proof of Proposition~\ref{claim:final}] Let $\mathcal G=(\mathcal V, \mathcal E)$ be a connected component of $\mathfrak G$. Let $G_0$ be the multigraph we associated to $\mathcal G$, and $\pi: \mathcal G \rightarrow \mathscr H_0$ be the isomorphism constructed above. 

\medskip

For $(F, T) \in \mathscr V_{0,1}$, we have $(T, F) \in \mathscr V_{0,2}$, and by definition, we have 
\[q(\pi^{-1}(F, T)) =q (\pi^{-1}(T,  F)).\]
Since $\pi$ is an isomorphism, it follows that 
\begin{align*}
 \sum_{\mathfrak u \in \mathcal V_2} 
q(\mathfrak u)&=\sum_{(T, F) \in \mathscr V_{0,2}}\, q(\pi^{-1}(T, F))\,\,\, = \sum_{(F, T) \in 
 \mathscr V_{0,1}}\, q(\pi^{-1}(F, T))=\sum_{\mathfrak u \in \mathcal V_1} 
q(\mathfrak u),
\end{align*}
and the proposition follows.  
\end{proof}

The proof of Theorem~\ref{claim:aux} is now complete.

\section{Proof of Theorem~\ref{thm:main1}}\label{sec:app}

 In this section we explain how to derive Theorem~\ref{thm:main1} from Theorem~\ref{claim:aux}. The presentation here is heavily based on the results and notations of~\cite{ABBF}, 
 to which we refer for the missing details.

First we recall the set-up. Let $\Delta$ be a small open disc around the origin in $\mathbb C$, and denote by $\Delta^*=\Delta \setminus \{0\}$ the punctured disk.
Let $S=\Delta^{3g-3}$. Let $C_0$ be a stable curve of arithmetic genus $g$, and let $G =(V,E)$ be the dual graph of $C_0$. Denote by $h\leq g$ the genus of $G$, so we have $h = |E|-|V|+1$. The
versal analytic deformation  of $C_0$ over $S$ is denoted by $\pi : \mathcal C \rightarrow S$. The fibers of $\pi$ are smooth outside a normal crossing divisor
$D=\bigcup_{e \in E} D_e \subset S$, which has irreducible components
indexed by the set of  edges of $G$ (which are in bijection with the singular points of $C_0$). Let $U$ be the
complement of the divisor $D$ in $S$, that we identify with $U=(\Delta^*)^{E}\times
\Delta^{3g-3-|E|}$. Let
\begin{equation}\label{eq:uc}
\widetilde{U} := \mathbb H^E \times \Delta^{3g-3-|E|} \longrightarrow U. 
\end{equation}
be the universal cover of $U$.  The projection map $\widetilde U \to U$ is given by $z_e \mapsto \exp(2\pi i
z_e)$ in the first factors corresponding to the edges of $G$, and is the identity on the remaining factors.

\smallskip

Suppose that we have two collections
$$
\sigma_1=\{ \sigma_{l, 1}\}_{l=1, \ldots, n}, \quad \sigma_2=\{\sigma_{l, 2}\}_{\rl=1, \ldots, n}
$$
of sections $\sigma_{\rl, i} \colon S \to \mathcal{C}$ of $\pi$, for $1\leq l\leq n$ and $i=1,2$. By regularity of $\mathcal C$, these sections cannot pass through double points of $C_0$, and for each $l$, 
$\sigma_{l,i}(S)\cap C_0$ lies in a unique irreducible component $X_{v_\rl}$ of $C_0$, which corresponds to a vertex $v_l$ of the dual graph $G$. We assume that the sections 
$\sigma_{l,1}$ and $\sigma_{l,2}$ are distinct on $C_0$, which implies, after shrinking $S$ if necessary, that ${\sigma_1}$ and $\sigma_2$ are disjoint as well.

\smallskip

 Let $\underline \p_1 = \{\p_{l,1}\}_{l=1}^n \in (\R^D)^{n,0}$ and 
 $\underline \p_2 = \{\p_{l,2}\}_{l=1}^{n}\in (\R^D)^{n,0}$ be two collections of external momenta satisfying 
 the conservation law. 
 Using the labelings of sections and the external momenta, we associate each marked point $\sigma_{l,i}$ with $\p_{l,i} \in \R^{D}$, and denote by
 $\underline \p_1^G=(\p_{v, 1}^G)$ and $\p_2^G=(\p_{v, 2}^G)$  the restriction of $\underline \p_1$ and $\underline \p_2$ to the graph $G$: for each vertex 
$v$ of $G$, the vector  $\p_{v,i}^G$ is the sum of all the momenta $\p_{l,i}$ with $v_l = v$.  In this way, at any point $s \in S$, we get two $\R^{D}$-valued degree zero
divisors on the curve $C_s$ that we denote by $\mathfrak A_s $ and $\mathfrak B_{s}$: they are defined by
$$
\mathfrak A_s := \sum_{l=1}^n \p_{l,1}\sigma_{l,1}(s), \qquad \mathfrak B_s := \sum_{\rl=1}^n \p_{l,2}\sigma_{l,2}(s).
$$ 
This gives us the real valued function on $U$ which sends the point $s$ of $U$ to  $\langle \mathfrak{A}_s, \mathfrak{B}_s \rangle$,
where $\langle .\,, . \rangle$ denotes the archimedean height pairing between $\R^{D}$-valued degree zero divisors, see the introduction and~\cite{ABBF} 
for the definition and the extension to $\R^D$-valued divisors defined by means of the given Minkowski bilinear form. 

We are interested in understanding the behaviour of the function $s\mapsto \langle \mathfrak{A}_s, \mathfrak{B}_s \rangle$ close to the origin $0\in S \setminus U$. This can be carried out using the nilpotent orbit theorem in Hodge 
theory, c.f.~\cite{ABBF}. We can reduce to the case where the external momenta are all integers, and in this case, the divisors $\mathfrak A_s$ and $\mathfrak
B_s$ having integer coefficients at any point $s$,  the archimedean height pairing between $\mathfrak{A}_s$ and $\mathfrak{B}_s$ can be described 
in terms of a  biextension mixed Hodge structure, c.f.~\cite{Hain, ABBF}. Denoting by $H_{\mathfrak{B_s}, \mathfrak{A_s}}$ the  biextension mixed Hodge structure associated to the pair 
$\mathfrak{A}_s$ and $\mathfrak{B}_s$, the family $H_{\oB_{s},\oA_{s}}$ fit together into an admissible variation of mixed Hodge structures. 
An explicit description of the period map for the variation of the biextension mixed Hodge structures $H_{\oB_{s},\oA_{s}}$ was obtained in~\cite{ABBF}. We briefly recall this now.
\smallskip

Fix base points $s_0 \in U$ and $\tilde
s_0 \in \widetilde U$ lying above $s_0$, 
and choose a symplectic  basis 
\begin{equation*}
a_1,\dotsc,a_{g},b_1,\dotsc,b_{g} \in H_1(C_{s_0},\Z) =
A_{0}\oplus B_{0}.
\end{equation*}  
Shrinking $S$ if necessary, the inclusion $C_{s_0} \hookrightarrow \mathcal C$ gives a surjective specialization map
\begin{equation*}
\mathrm{sp} \colon H_1(C_{s_0},\Z) \rightarrow H_1(\sC,\Z) \simeq H_1(C_0,\Z).
\end{equation*} 
Denote by $A\subset H_1(C_{s_0},\Z)$ the subspace spanned by the
vanishing cycles $a_e$, one for each $e\in E$. We have the exact sequence
\begin{equation*}
0\to A \to H_1(C_{s_0}, \Z) \xrightarrow{\mathrm{sp}} H_1(C_0, \Z) \to 0,
\end{equation*}
and we define $A'=A + \mathrm{sp}^{-1}(\bigoplus_{v \in V} H_1(X_v, \Z))\subseteq H_1(C_{s_0}, \Z)$. We have
\begin{equation}
 \label{eq:aux}
H_1(C_{s_0}, \Z)/A' \simeq H_1(G, \Z).
\end{equation}
Changing the symplectic basis if necessary, we suppose that the space of vanishing cycles $A$ is generated by
$a_1,\dots, a_h\in A$, and $b_1, \dots, b_h$ generate $H_1(C_{s_0},
\Z)/A' \simeq H_1(G, \Z)$ as in \eqref{eq:aux}.

\smallskip

For $i=1,2$, let $\Sigma _{i,s}=\{\sigma _{1,i}(s),\dots ,\sigma
_{n,i}(s)\}$, and set $\Sigma_{s} =\Sigma _{1, s}\cup \Sigma _{2, s}$ and
$\Sigma _{i}=\bigcup_{s}\Sigma _{i,s}$. By choosing loops that
do not meet the points in $\Sigma _{s_{0}}$, we 
lift the classes $a_{j}$ and $b_{j}$, $j=1,\dots ,g$ to elements of
$H_{1}(C_{s_{0}}\setminus \Sigma _{s_{0}},\mathbb Z)$. By an abuse of the notation, we denote by $a_{j}$ and $b_{j}$ these new classes as well. This  symplectic basis can be spread out to a basis 
\begin{equation*}
a_{1,{\tilde s}}, \dots, a_{g,{\tilde s}}, b_{1,{\tilde s}}, \dots,
b_{{g,\tilde s}} 
\end{equation*}
of $H_{1}(C_{{\tilde s}}\setminus \Sigma _{{\tilde s}},\Z)$, for any $s \in U$ and any $\tilde s\in
\widetilde U$ over $s$. The elements $a_{i,\tilde s}$ only depend on $s$ and not on $\tilde
s$; we will also denote them by $a_{i,s}$, and if there is no risk of confusion, we drop $\tilde s$, and use simply $a_i$ and $b_i$. 

\smallskip

In addition, we have a collection of $1$-forms
$\{\omega_{i}\}_{i=1, \ldots, 
  g}$ on $\pi ^{-1}(U)\subset \mathcal C$ such that the forms $\{\omega_{{i, s}}\coloneqq
\omega_{i}|_{{C_{s}}}\}_{i=1, \ldots, g}$, for each $s\in
U$, are a  basis  of the
holomorphic differentials on $C_s$ and
\begin{equation} \label{eq7}
 \int_{a_{i, s}}
\omega_{j, s}=\delta_{i, j}. 
\end{equation}
The period matrix for the curve $C_s$
is given by
$\bigl(\,\int_{b_{{i, s}}}\omega_{{j,s}}\,\bigr)$. 

\smallskip

Choose now an integer valued 1-chain
$\gamma_{\oB_{s_0}}$ on $C_{s_0} \setminus \Sigma _{1,s_{0}}$
with $\mathfrak B_{s_{0}}$ as boundary. Adding a linear combination of
the $b_{j}$ if necessary, we further assume that
\begin{equation}
  \label{eq5}
  \langle a_{i},\gamma
_{\oB_{s_{0}}}\rangle=0.
\end{equation}
We spread the class 
$$
[\gamma_{\oB_{s_0}}]\in H_{1}(C_{s_0} \setminus
\Sigma _{1,s_{0}},\Sigma _{2,s_{0}},\Z)
$$ 
of $\gamma_{\mathfrak B_{s_0}}$  to classes $\gamma_{\mathfrak B_{\tilde s}}$. 

\smallskip

Similarly, we obtain
a 1-form $\omega_{\mathfrak A}$ on $\pi
^{-1}(U)\setminus \Sigma _{1}$ such that each restriction
$\omega _{\mathfrak A,s}\coloneqq \omega_{\mathfrak A}|_{C_{s}} $ is a holomorphic
form of the third kind with residue $\oA_s$. Adding to $\omega_{\oA}$ a linear combination of the $\omega_{i}$ if needed, we can suppose that $\omega_{\oA}$ is normalized so that
\begin{equation}\label{eq8}
  \int _{a_{i},s}\omega _{\oA,s}=0,\quad i=1,\dots,g.
\end{equation}

Denote by $\mathrm{{Row}}_g(\C)\simeq \mathbb C^g$ and $\mathrm{{Col}}_g(\mathbb C)\simeq \mathbb C^g$ the $g$-dimensional vector space of row and column matrices, and 
let \begin{equation*}
\widetilde{X}:= \mathbb H_g \times \mathrm{Row}_g(\mathbb C) \times \mathrm{Col}_g(\C) \times \C.
\end{equation*}
We have the following description of the period map from~\cite{ABBF}.
\begin{prop}[\cite{ABBF}]The \textit{period map} of the variation of mixed Hodge
  structures $H_{\mathfrak{B}_{s}, \mathfrak{A}_{s}}$ is given by
\begin{align*}
\widetilde\Phi \colon \widetilde U &\longrightarrow \widetilde X \nonumber \\
\tilde{s} &\longmapsto \Bigl(
\,\bigl(\,\int_{b_{{i, \tilde s}}}\omega_{j,s}\,\bigr)_{i,j}\,,
\, \bigl(\,\int_{\gamma_{\oB, \tilde s}}\omega_{j,s}\,\bigr)_j\,,
\,\bigl(\,\int_{b_{i,\tilde s}}\omega_{\mathfrak A,s}\,\bigr)_i\,,
\,\int_{\gamma_{\mathfrak B, \tilde s}}\omega_{\oA,s}\,\Bigr).
\end{align*}
\end{prop}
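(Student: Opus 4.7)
The result is attributed to \cite{ABBF}, so the task in the present paper is essentially to unwind the construction there and verify that the four entries of $\widetilde\Phi$ compute the four periods required by the target $\widetilde X$. The plan is to recognize $\widetilde X = \H_g \times \mathrm{Row}_g(\C) \times \mathrm{Col}_g(\C)\times\C$ as the period domain for biextension mixed Hodge structures whose graded pieces, in weights $0,-1,-2$, are $\Z$, $H^1(C_s)(1)$ and $\Z(1)$, and then to match each factor with the corresponding entry.

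First I would recall that the biextension mixed Hodge structure $H_{\oB_s,\oA_s}$ classifies four pieces of data: the Hodge structure on the pure piece $H^1(C_s)(1)$; the class of the small extension $0\to H^1(C_s)(1)\to W_0/W_{-2}\to \Z\to 0$, determined by $\oB_s$ via Abel--Jacobi; the class of the small extension $0\to \Z(1)\to W_{-1}\to H^1(C_s)(1)\to 0$, determined dually by $\oA_s$; and, once these two small extensions are fixed, the remaining biextension class in $\C$ modulo periods. These four invariants are classified, respectively, by the factors $\H_g$, $\mathrm{Row}_g(\C)$, $\mathrm{Col}_g(\C)$ and $\C$, which therefore fixes what each entry of $\widetilde\Phi$ must compute.

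Next I would identify each entry of $\widetilde\Phi$ with the corresponding period. The matrix $\bigl(\int_{b_i}\omega_j\bigr)_{i,j}$ is the classical Torelli period matrix of $C_s$ in the basis normalized by \eqref{eq7}, and lies in $\H_g$ by the Riemann bilinear relations. The row $\bigl(\int_{\gamma_{\oB}}\omega_j\bigr)_j$ is the Abel--Jacobi image of $\oB_s$, well-defined as a function on $\widetilde U$ because the ambiguity of the 1-chain $\gamma_{\oB}$ modulo $A$-cycles is killed by the normalization \eqref{eq5}. The column $\bigl(\int_{b_i}\omega_{\oA}\bigr)_i$ computes the Abel--Jacobi class of $\oA_s$ via the dual (de Rham) description, using the normalized logarithmic form $\omega_{\oA}$ of the third kind with residue $\oA$; the normalization \eqref{eq8} kills the ambiguity of $\omega_{\oA}$ modulo the $\omega_j$'s. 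Finally, $\int_{\gamma_{\oB}}\omega_{\oA}$ is the period that classifies the biextension once the two small extensions are fixed, and its real part recovers the archimedean height pairing $\langle\oA_s,\oB_s\rangle$.

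The main point to check — and the only place where serious bookkeeping is required — is that these identifications are compatible with the Hodge filtration on the de Rham realization of $H_{\oB_s,\oA_s}$ and with the monodromy action lifted to the universal cover $\widetilde U$. Concretely, one must verify that the Hodge filtration is spanned by the $\omega_j$'s together with $\omega_{\oA}$ and a generator of the weight-zero piece, so that pairing against the $b_i$'s and $\gamma_{\oB}$ really produces the period coordinates on $\widetilde X$, and that lifting to $\widetilde U$ exactly absorbs the residual ambiguities in $\gamma_{\oB}$ and $\omega_{\oA}$. There is no analytic obstruction here beyond careful diagram chasing; the content of the proposition is precisely that this bookkeeping works, and the present paper imports it directly from \cite{ABBF}.
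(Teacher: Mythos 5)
The paper does not prove this proposition: it states it as a result imported from \cite{ABBF} (``We have the following description of the period map from~\cite{ABBF}''), so there is no internal argument against which to compare your attempt. Your sketch is consistent with what one expects the proof in \cite{ABBF} to do, and it correctly identifies the key structural points: the target $\widetilde X = \H_g\times\mathrm{Row}_g(\C)\times\mathrm{Col}_g(\C)\times\C$ as (a lift of) the classifying space for biextension mixed Hodge structures with graded pieces $\Z$, $H^1(C_s)(1)$, $\Z(1)$; the roles of the normalizations \eqref{eq7}, \eqref{eq5}, \eqref{eq8} in making the periods well-defined as holomorphic functions on $\widetilde U$; and the fact that passing to the universal cover absorbs the monodromy ambiguity in $\gamma_{\oB}$ and $\omega_{\oA}$. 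These are exactly the ingredients the surrounding text of the paper uses, e.g.\ when it computes $N_e$ acting on the four period entries.

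That said, as you yourself observe in the final paragraph, what you have written is a plan rather than a proof: the assertion ``pairing against the $b_i$'s and $\gamma_{\oB}$ really produces the period coordinates'' is essentially the statement of the proposition, and verifying it requires the explicit construction of the admissible variation $H_{\oB_s,\oA_s}$, its weight and Hodge filtrations in de Rham terms, and the identification of the classifying map with the integral formulas — none of which you carry out. This is acceptable here precisely because the paper itself defers to \cite{ABBF} for this content, but be aware that a self-contained proof would have to supply it. One small caution on conventions: whether $\oB$ determines the extension by $\Z$ and $\oA$ the extension by $\Z(1)$, or vice versa, depends on the duality conventions chosen in \cite{ABBF} for $H_{\oB,\oA}$ versus $H_{\oA,\oB}$; your statement is plausible but should be double-checked against the source before being asserted.
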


\medskip

We now explain the action of the logarithm of monodromy map $N_e$, for $e\in E$, c.f.~\cite{ABBF}.

As before, each
vanishing cycle $a_{e}\in H_{1}(C_{s_{0}},\Z)$  for $e\in E$ can be lifted
in a canonical way to a cycle $a_e$ in $H_{1}(C_{s_{0}}\setminus \Sigma
_{s_{0}},\Z)$. 

In this homology 
group, we can write 
\begin{equation} \label{eq:2}
  a_{e}=\sum_{i}c_{{e,i}} a_{i}+\sum _{l} d_{{e,l,1}} \gamma _{{l,1}} +
  \sum_{l} d_{{e,l,2}} \gamma _{l,2},
\end{equation}
with $\gamma _{l,i}$ denoting a small enough negatively oriented loop around
the point $\sigma _{l,i}({s_{0}})$. Note that the coefficients  $c_{e,i}$ are zero for $i>h$ (by the choice of the symplectic  basis $\{a_{i},b_{i}\}$). 

\smallskip

By Picard-Lefschetz formula, we deduce from~\eqref{eq5} and~\eqref{eq:2} that 
\begin{align}
  N_{e}(b_{i})
  &=-\langle {b_{i},a_{e}}\rangle a_{e} = c_{e,i}a_{e},\label{equation10}\\
  N_{e}(\gamma _{\oB_{s_{0}}})
  &=-\langle \gamma_{\oB_{s_{0}}},a_{e}\rangle a_{e} =
    -a_{e}\sum_{l}\ps_{l,2}d_{e,l,2}.\label{equation11}
\end{align}
Using 
\eqref{eq:2}, \eqref{eq8} and \eqref{eq7}, we can compute the integral of the forms $\omega _{j}$ and $\omega _{\mathfrak A}$ 
with respect to the vanishing cycles, giving
\begin{equation}\label{equation9}
  \int_{a_{e}}\omega _{j}=c_{e,j},\quad \int_{a_{e}}\omega
  _{\oA_{s_{0}}}=\sum_{l} \ps_{l,1}d_{e,l,1}.
\end{equation}

From \eqref{equation10}, \eqref{equation11} and \eqref{equation9}, we get 
\begin{align*}
  N_e(\int_{b_i}\omega_{j,s_{0}})
  &= -\langle b_i,a_e\rangle\int_{a_e}\omega_{j,s} =
    c_{e,i}c_{e,j}\,;\\
    N_e(\int_{b_i}\omega _{\mathfrak A_{s_{0}}})
  &=-\langle b_i,a_e\rangle\int_{a_e}\omega_{\mathfrak A_{s_{0}}}=
    c_{e,i}\sum_{l}\ps_{l,1}d_{e,l,1}\,;\\ 
  N_e(\int_{\gamma_{\mathfrak B_{s_{0}}}}\omega_{j,s_{0}})  
  &= -\langle \gamma_{\mathfrak B_{s_{0}}},a_e\rangle\int_{a_e}\omega_{j,s}=
    -c_{e,j}\sum_{l}\ps_{l,2}d_{e,l,2}\,;\\ 
  N_e(\int_{\gamma_{\mathfrak B_{s_{0}}}}\omega _{\mathfrak A_{s_{0}}})
  &= -\langle \gamma_{\mathfrak B_{s_{0}}},a_e\rangle
    \int_{a_e}\omega_{\mathfrak A_{s_{0}}}=
    -\Big(\sum_{l}\ps_{l,1}d_{e,l,1}\Big)
    \Big(\sum_{k}\ps_{k,2}d_{e,k,2}\Big).
  \end{align*}
  
For each $e\in E$, the logarithm of the monodromy $N_e$ is given by 
\begin{equation*}
N_{e}=
  \begin{pmatrix} 0 & 0 & \underline{\ps}_{2}\widetilde{W}_{e} &
    \underline{\ps}_{2}\Gamma _{{e}} {^{t}\underline{\ps}_{1}}  \\0 &
    0 & \widetilde{M}_{e} & \widetilde {Z}_{e} {^{t}\underline{\ps}_{1}}\\
    0 & 0 & 0 & 0 \\ 0 & 0 & 0 & 0\end{pmatrix}, 
\end{equation*}
where the matrices $\widetilde M_{e}$, $\widetilde W_{e}$,
$\widetilde Z_{e}$, and
$\Gamma _{e}$ are given by
\begin{equation*}
 (\widetilde M_{e})_{i,j}=c_{e,i}c_{e,j},\qquad
  (\widetilde W_{e})_{l,j}=-c_{e,j}d_{e,l,2},\qquad (\widetilde
  Z_{e})_{i,l}=c_{e,i}d_{e,l,1},\qquad  
  (\Gamma _{k,l})=-d_{e,k,2}d_{e,l,1}.
\end{equation*}

\smallskip

One verifies that the matrix $\widetilde M_{e}$ is the 
$h\times h$ matrix $M_{e}$ filled with zeros to a $g \times g$
matrix, where $M_e$ is the matrix of the symmetric bilinear form $\langle . \rangle_e$ in the basis $b_1, \dots, b_h$ of $H_1(G, \mathbb Z)$.   Similarly, one sees that the matrix $\widetilde W_{e}$ (resp. $\widetilde{Z}_{e}$) is obtained from a matrix $W_{e}$ (resp. $Z_e$) that has only $h$ columns (resp. rows)  by extension with zeros.
 The entries of these matrices are given as follows. The choice of the path $\gamma _{\mathfrak B}$ provides a preimage $\omega
_{2}$ for the vector $\mathbf p_{2}^{G}$ in $\Z^{E}$, obtained by counting the number of
times with sign that $\gamma _{\mathfrak B}$ crosses the vanishing cycle
$a_{e}$. Similarly, $\omega _{\mathfrak A}$ gives a preimage
$\omega _{1}$ for $\mathbf p_{1}^{G}$ in $\mathbb C^{E}$ whose $e$-th component, for $e\in E(G)$, is given
by  $\int_{a_{e}}\omega _{\mathfrak A}.$

With these preliminaries, we can now state the expression of the height pairing in terms of the period map. Let us separate the variables which correspond to the edges of the graph $G$ as $
 s_E$.  Any point $s$ of $U$ then can be written as $s =
 s_E \times s_{E^c}$, where $s_{E^c}$ denotes all the other $3g-3-|E|$ coordinates. Denoting the coordinates in the
 universal cover $\widetilde U$ by $z_{e}$, the projection $\widetilde U\to U$ is
 given in these coordinates by
 \begin{equation*}
   s_{e}=
   \begin{cases}
     \exp(2\pi i z_{e}),&\text{ for }e\in E,\\
     z_{e},&\text{ for }e\not \in E.
   \end{cases}
 \end{equation*}
 
 The following
expression for the height pairing function is obtained in~\cite{ABBF}.
\smallskip

\begin{prop}[\cite{ABBF}]\label{prop:2} There exists $h_0>0$ and a holomorphic map $\Psi_0: U \rightarrow \widetilde X$,
\begin{equation*}
  \Psi _0(s)=(\Omega _{0}(s),
  W_{0}(s),Z_{0}(s),\rho _{0}(s)),
\end{equation*}
such that introducing
\begin{equation*}
  y_{e}=\Im(z_{e})=\frac{-1}{2\pi }\log|s_{e}|,
\end{equation*}
the height pairing is given by
  \begin{multline}\label{eq:prop57}
    \langle \oA_{s},\oB_{s}\rangle=     -2\pi \Im(\rho _{0})-\sum_{e\in E}2\pi    y'_{e}\underline{\ps}_{2}\Gamma_{e} \,{^{t}\underline{\ps}_{1}}+\\
    2\pi \Big(\Im(W_{0})+\sum_{e\in
      E}y'_{e}\underline{\ps}_{2}\widetilde W_{e}\Big)\cdot 
    \Big(\Im(\Omega _{0})+\sum_{e\in E}y'_{e}\widetilde M_{e}\Big)^{-1}\\
   \cdot \Big(\Im(Z_{0})+\sum_{e\in
      E}y'_{e}\widetilde Z_{e}\,{^{t}\underline{\ps}_{1}}\Big), 
  \end{multline}
  where $y_{e}'=y_{e}-h_{0}$.
\end{prop}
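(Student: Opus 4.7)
The plan is to derive Proposition~\ref{prop:2} by combining three ingredients: the description of the archimedean height pairing as a datum of the biextension variation $H_{\mathfrak B_s, \mathfrak A_s}$, the multivariable nilpotent orbit theorem applied to its period map $\widetilde \Phi$, and the specific block structure of the monodromy logarithms $N_e$ computed above.

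First I would recall Hain's expression for the archimedean height of a biextension: in terms of the period-map coordinates $(\Omega, W, Z, \rho) \in \widetilde X$, one has
\[\langle \mathfrak A_s, \mathfrak B_s \rangle \,=\, -2\pi \Im(\rho) \,+\, 2\pi \,\Im(W)\cdot\bigl(\Im\Omega\bigr)^{-1}\cdot \Im(Z),\]
up to sign and conjugation conventions; this is the standard formula for biextensions of pure Hodge structures of weight $-1$. Next, I would invoke the multivariable nilpotent orbit theorem of Cattani--Kaplan--Schmid applied to the admissible variation $\{H_{\mathfrak B_s, \mathfrak A_s}\}$: after possibly shrinking $S$, there is a holomorphic map $\Psi_0 = (\Omega_0,W_0,Z_0,\rho_0) : U \to \widetilde X$ extending continuously to $S$, and a constant $h_0>0$ absorbing the exponentially small corrections, such that $\widetilde\Phi(\tilde s) = \exp\bigl(\sum_e z_e N_e\bigr) \cdot \Psi_0(s)$ modulo terms of size $O(e^{-2\pi y_e})$ in each direction.

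The block form of $N_e$ displayed above the proposition shows that every nonzero entry sits in the upper-right $2\times 2$ block, so $N_e N_{e'} = 0$ for all $e, e' \in E$. Consequently $\sum_e z_e N_e$ squares to zero and $\exp(\sum_e z_e N_e) = I + \sum_e z_e N_e$. Applying this to $\Psi_0$ component by component yields $\Omega(\tilde s) = \Omega_0 + \sum_e z_e \widetilde M_e$, $W(\tilde s) = W_0 + \sum_e z_e\,\underline\ps_2\widetilde W_e$, $Z(\tilde s) = Z_0 + \sum_e z_e\,\widetilde Z_e\,{}^t\underline\ps_1$, while $\rho(\tilde s)$ picks up both the translation $\sum_e z_e\,\underline\ps_2\Gamma_e\,{}^t\underline\ps_1$ and certain cross terms coming from the biextension action on the scalar coordinate. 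Since the matrices $\widetilde M_e, \widetilde W_e, \widetilde Z_e, \Gamma_e$ are all real, taking imaginary parts converts $z_e$ into $y_e$, and the redefinition of $\Psi_0$ that absorbs the exponentially small remainder corresponds to replacing $y_e$ by $y_e' = y_e - h_0$; substituting into the height formula then produces~\eqref{eq:prop57}.

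The main obstacle will be the bookkeeping in the transformation of the scalar coordinate $\rho$: in a biextension the top extension class does not transform purely by translation but receives further contributions quadratic in $W_0, Z_0$ coupled through the evolving period matrix $\Omega_0 + \sum_e z_e \widetilde M_e$, reflecting the nontrivial Lie bracket structure in the unipotent automorphism group of the biextension. Verifying that these cross terms recombine with the imaginary parts of the translated $(\Omega, W, Z)$ to reproduce precisely the quadratic form $\bigl(\Im W_0 + \sum_e y_e'\underline\ps_2\widetilde W_e\bigr)\cdot\bigl(\Im\Omega_0 + \sum_e y_e'\widetilde M_e\bigr)^{-1}\cdot\bigl(\Im Z_0 + \sum_e y_e'\widetilde Z_e\,{}^t\underline\ps_1\bigr)$ is the delicate computational heart of the argument. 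Admissibility of the variation guarantees that $\Im\Omega_0 + \sum_e y_e'\widetilde M_e$ is positive definite for $y_e$ sufficiently large, so the inverse is well-defined on a suitable neighborhood of $0\in S$.
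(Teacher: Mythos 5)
This proposition is not proved in the present paper at all: it is recalled verbatim from \cite{ABBF} (note the ``[\cite{ABBF}]'' tag in the theorem head, and the sentence ``The following expression for the height pairing function is obtained in~\cite{ABBF}'' immediately preceding it). So there is no in-paper proof to compare against; the paper treats Proposition~\ref{prop:2} as a black box and uses it as input to Theorem~\ref{thm:4}, whose proof via Theorem~\ref{claim:aux} is the actual new content of Section~\ref{sec:app}.

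That said, your sketch correctly identifies the three pillars of the argument from \cite{ABBF}: Hain's biextension formula for the archimedean height in the coordinates $(\Omega, W, Z, \rho)$, the (multivariable, admissible) nilpotent orbit theorem applied to the period map $\widetilde\Phi$, and the vanishing $N_e N_{e'}=0$ coming from the upper-triangular two-step block structure of the $N_e$. Your observation that $\exp(\sum z_e N_e)=I+\sum z_e N_e$ and hence that the coordinates translate affinely is the correct mechanism, and taking imaginary parts then converts $z_e$ to $y_e$.

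Two points in your write-up are shaky. First, your last paragraph anticipates ``cross terms coming from the biextension action on the scalar coordinate $\rho$,'' but since $N_e N_{e'}=0$ the unipotent group element acts by a \emph{pure translation} on the affine chart $\widetilde X$, so $\rho$ transforms only by addition of $\sum_e z_e\,\underline\ps_2\Gamma_e\,{}^t\underline\ps_1$ and no quadratic corrections arise; the quadratic term in \eqref{eq:prop57} comes entirely from Hain's formula applied to the translated $(W,\Omega,Z)$, not from a transformation law of $\rho$. Second, the constant $h_0$ is not there to ``absorb the exponentially small corrections.'' The exponential remainder in the nilpotent orbit theorem is absorbed into the \emph{holomorphic} map $\Psi_0$ (which is honest, not an approximation); $h_0$ is a fixed real shift that reparametrizes $y_e\mapsto y_e'=y_e-h_0$, ensuring for instance that $\Im(\Omega_0)+\sum_e y'_e\widetilde M_e$ is invertible on the relevant domain after shrinking $\Delta$. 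Neither point changes the overall shape of the argument, but as written your account of $\rho$ and of $h_0$ does not match what the formula actually encodes.
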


We have

\begin{thm}\label{thm:4} There exists a bounded function $h\colon U\to \R$ such that after shrinking the radius of $\Delta $ if necessary, we can write the height pairing as 
  \begin{multline}
    \label{eqfinal}\langle \mathfrak A_{s},\mathfrak B_{s}\rangle=
    -\sum_{e\in E}2\pi y_{e}\underline{\ps}_{2}\Gamma
    _{e} \,{^{t}\underline{\mathbf p}_{1}}
    +2\pi \Big(\sum_{e\in E}y_{e}\underline{\mathbf p}_{2}W_{e}\Big)
    \Big(\sum_{e\in E}y_{e} M_{e}\Big)^{-1}
    \Big(\sum_{e\in
      E}y_{e}Z_{e}\,{^{t}\underline{\mathbf p}_{1}}\Big)+h(s).
  \end{multline}
\end{thm}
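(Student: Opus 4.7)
The plan is to derive equation~\eqref{eqfinal} from Proposition~\ref{prop:2} by absorbing three types of bounded contributions into the function $h(s)$: the shift $y_e' = y_e - h_0$, the additive term $-2\pi\Im(\rho_0)$, and the imaginary parts $\Im(\Omega_0), \Im(W_0), \Im(Z_0)$ that appear inside the quadratic-form term. The first two are trivial: $\rho_0$ is holomorphic on the whole polydisc so $\Im(\rho_0)$ is bounded on $U$, and replacing $y'_e$ by $y_e$ in the linear sum $\sum_e 2\pi y'_e \underline{\mathbf p}_2 \Gamma_e\,{}^t\underline{\mathbf p}_1$ only changes it by the bounded constant $2\pi h_0 \sum_e \underline{\mathbf p}_2 \Gamma_e\,{}^t\underline{\mathbf p}_1$. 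This reduces the theorem to showing that the quadratic-form term $Q_0(s)$ of Proposition~\ref{prop:2} differs from its clean analogue
\[
Q_{\mathrm{cl}}(s) := 2\pi \Bigl(\sum_e y_e \underline{\mathbf p}_2 W_e\Bigr)\Bigl(\sum_e y_e M_e\Bigr)^{-1}\Bigl(\sum_e y_e Z_e\,{}^t\underline{\mathbf p}_1\Bigr)
\]
by a bounded function on $U$.

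For this, I would first reduce the $g\times g$ problem to an $h\times h$ one via a Schur-complement expansion. Writing the middle $g\times g$ matrix of $Q_0(s)$ in block form with respect to the splitting of the symplectic basis into the first $h$ vanishing cycles $b_1,\dots,b_h$ and the remaining $g-h$ non-vanishing classes, the tilded matrices $\widetilde M_e, \widetilde W_e, \widetilde Z_e$ contribute only to the top-left $h\times h$ block (respectively to their first $h$ columns and rows), while the bottom and off-diagonal blocks come entirely from the bounded matrix $\Im(\Omega_0)$. A Schur-complement expansion then reduces $Q_0(s)$ to a bilinear form of the shape $u^{\tau}(\sum_e y_e M_e + B)^{-1} v$, where $B$ is a bounded symmetric $h\times h$ matrix built from the blocks of $\Im(\Omega_0)$ and its Schur complement in the non-vanishing part, and $u, v$ are bounded perturbations of $\sum_e y_e \underline{\mathbf p}_2 W_e$ and $\sum_e y_e Z_e\,{}^t\underline{\mathbf p}_1$ respectively.

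To control the remaining bounded perturbation, I would represent $Q_0(s)$ and $Q_{\mathrm{cl}}(s)$ as ratios of determinants via the bordered-matrix identity
\[
u^{\tau} P^{-1} v = -\frac{1}{\det P}\det\begin{pmatrix} P & v\\ u^{\tau} & 0\end{pmatrix},
\]
taking the $(h+1)$-th bordering row and column to incorporate the momentum vectors $\omega_1, \omega_2$ associated to $\underline{\mathbf p}_1, \underline{\mathbf p}_2$. Both forms then take the shape $\det(N(Y+A)N^{\tau})/\det(M(Y+A)M^{\tau})$ and $\det(NYN^{\tau})/\det(MYM^{\tau})$ respectively, for a bounded $m\times m$ edge-space perturbation $A$ obtained by lifting the cycle-and-momentum-space perturbation (built from $B, \Im(W_0), \Im(Z_0)$) through the surjective pullback $A \mapsto NAN^{\tau}$. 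Theorem~\ref{claim:aux} then yields $Q_0(s) - Q_{\mathrm{cl}}(s) = O_{\underline y}(1)$, once one unpacks $\det(NYN^{\tau})/\det(MYM^{\tau})$ as the polarization in two momenta of the well-known identity $\phi_G(\mathbf p, y)/\psi_G(y) = \omega^{\tau} Y\omega - \omega^{\tau} Y M^{\tau}(MYM^{\tau})^{-1} MY\omega$ and matches it against $Q_{\mathrm{cl}}(s)$ using the explicit expressions for $M_e, W_e, Z_e, \Gamma_e$ recalled just before Proposition~\ref{prop:2}.

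The main obstacle is the Schur-complement reduction together with the verification that the $(h+1)\times(h+1)$ bounded perturbation manufactured from $\Im(\Omega_0), \Im(W_0), \Im(Z_0)$ really does come from a bounded $m\times m$ edge-space matrix $A$ satisfying the hypotheses of Theorem~\ref{claim:aux}; for large enough $y_e$ this also guarantees the requisite invertibility of $M(Y+A)M^{\tau}$ and $N(Y+A)N^{\tau}$. Once this linear-algebra compatibility is in place, the combinatorial boundedness result of Section~\ref{sec:proof} closes the argument with no further work.
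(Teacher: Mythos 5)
Your proposal follows essentially the same route as the paper: absorb the bounded contributions from $\rho_0$ and the $h_0$ shift, represent the remaining rational expression as a ratio of determinants of the form $\det(N(Y+A)N^\tau)/\det(M(Y+A)M^\tau)$ via the bordered-matrix identity and a bounded lift of the cycle-space perturbation to an $m\times m$ matrix $A$, apply Theorem~\ref{claim:aux}, and use a Schur-complement block argument to handle the case $g>h$. The only differences are organizational: the paper proves the case $g=h$ first and then reduces $g>h$ to it via the block expansion of $\mathscr M^{-1}$ in Proposition~\ref{claim:1}, whereas you do the Schur reduction up front; and the paper keeps the $\Gamma_e$-term inside the determinant ratio (it is the $\omega^\tau(Y+A)\omega$ corner entry of $N(Y+A)N^\tau$), whereas you set it aside and use a zero-cornered bordered determinant for the quadratic form $Q_0$ alone -- a small sign/bookkeeping adjustment reconciles the two, as your remark about ``polarization in two momenta'' already indicates. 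Neither difference changes the substance of the argument.
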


 This theorem was proved in~\cite{ABBF} using normlike functions in the terminology of \cite[Section
 3.1]{BHJ}. We now give a proof based on Theorem~\ref{claim:aux}.

\medskip

  Since $\rho _{0}$ is a holomorphic function on $S=\Delta ^{3g-3}$, after
  shrinking the radius of $\Delta $ if necessary, we can assume that
  $\Im(\rho _{0})$ is bounded. In addition, since $h_0$ is constant, the difference between 
  $y'_{e}\underline{\ps}_{2}\Gamma_{e} \,{^{t}\underline{\ps}_{1}}$ and $y_{e}\underline{\ps}_{2}\Gamma_{e} \,{^{t}\underline{\ps}_{1}}$ is constant for each $e$. So we only need to prove that the third
  term in the right hand side of equation \eqref{eq:prop57} is, up to a bounded function, equal to the second term in the right hand side of \eqref{eqfinal}. 

\smallskip

We treat first the case $g=h$  and explain later how to reduce to this case. 
 
 \medskip

First, we can reduce to the case where $\p_i$ are real valued, c.f.~\cite{ABBF}.  Using the bilinearity of the right hand side term in~\eqref{eqfinal}, we can reduce to the case $\p_1 =\p_2$.

 Let $H = H_1(G, \R)$, $\omega \in \R^E$ given by $\p$, and $H_\omega \supset H$ as in Section~\ref{sec:mainthmintro}.  Let $\alpha = \sum_e y_e \langle\,,\rangle_e$ the bilinear form on $\R^E$. 

For a matrix of the form, 
\begin{equation*}
T= \begin{pmatrix} M & W\\
\prescript{\mathrm t}{}{W} &  S
\end{pmatrix}
\end{equation*}
where $M$ is an invertible $h\times h$ matrix, $W$ is a (column) vector of dimension $h$, and $S$ is a scalar, we have the formula 
$(\det M)M^{-1} = \text{adj}(M)$, where $\text{adj}(M)$ is the matrix of minors. This gives 
\begin{equation*}
\frac{\det T}{\det M}= -\prescript{\mathrm t}{}{W}M^{-1}W+S.
\end{equation*}

\smallskip

Using these observations, the expression on the right hand side of~\eqref{eqfinal} is the ratio $2\pi\det(\alpha|_{H_\omega})/{\det(\alpha|_{H})}$, for the basis of $H$ (resp.) given by $B=\{b_1, \dots, b_h\}$ (resp. $B_\omega=\{b_1, \dots, b_h, \omega\}$). Similarly,
the expression      on the right hand side of Proposition~\ref{prop:2} at any point $s$ of $U$ is the ratio ${2\pi\det(\alpha|_{H_\omega} + \beta(s)|_{H_\omega})}/{\det(\alpha|_{H} + \beta(s)|_{H})}$ for a bilinear form $\beta(s)$ 
     on $H_\omega$ (given by $W_0$, $Z_0$, $\Omega_0$, $h_0$, and $\Gamma_e, W_e, Z_e, M_e$), calculated using the basis $B$ and $B_\omega$ of $H$ and $H_\omega$). 
 
 By boundedness of $W_0, Z_0, \Omega_0,$ and $h_0$, $\beta(s)$ lies in a compact subset of the space of bilinear forms on $H_\omega$. 
 Fixing a complmenet $H'$ to $H_\omega$, i.e., $H_\omega+H' =\R^m$, and extending $\beta(s)$ trivially (by zero) to $\R^m$, we can assume that $\beta(s)$ is the restriction to $H_\omega$ of a bilinear form  
 $\widetilde \beta(s)$ on $\R^m$, and that $\widetilde \beta(s)$ lie in a compact subset of the space of bilinear forms on $\R^m$ for $s\in U$.
 
 \medskip
 
Let $M$ (resp. $N$) be the $h\times m$ (resp. $(h+1)\times m$) matrix of the coefficients of the basis $B$ (resp. $B_\omega$) 
 in the standard basis of $\R^m$.  Let $Y = \mathrm{diag}(y_1, \dots, y_m)$ be 
 the diagonal $m\times m$ matrix of $\alpha$ in the standard basis of $\R^m$. 

\medskip

Let $A: U \rightarrow \mathrm{Mat}_{m\times m}(\R)$ be the matrix-valued map taking at $s\in U$ the value $A(s)$ the matrix of the bilinear form $\widetilde \beta(s)$ 
 in the standard basis of $\R^m$. 

\medskip

 Theorem~\ref{thm:4} in the case $g=h$ now follows from Theorem~\ref{thm:main1}, which is  the statement  that the difference 
 ${\det(N (Y + A)N^\tau)}/{\det (M(Y+A)M^\tau)} - {\det(N Y N^\tau)}/{\det(MYM^\tau)}$ is  $O_{\underline y} (1)$.

\vspace{.7cm}
We now explain how to reduce the general case $g>h$ to the case treated above.

  \medskip
  
  Let $\mathscr W := \Im(W_{0})-\sum_{e\in
      E}y_{e}\underline{\ps}_{2}\widetilde W_{e}$, and write $\mathscr W = (\mathscr W_1, \mathscr W_2)$ with $\mathscr W_1$ the vector of the $h$ first  coordinates.
       Similarly, write 
    $\mathscr Z= \Im(Z_{0})+\sum_{e\in
      E}y_{e}\widetilde Z_{e}\,{^{t}\underline{\ps}_{1}}$, and write $\mathscr Z = {^{t}(\mathscr Z_1, \mathscr Z_2)}$ with $\mathscr Z_1$ the first vector of the $h$ first coordinates. 
    
    Let $\mathscr M = \Im(\Omega_0)+\sum_{e\in E}y_{e}\widetilde M_{e}$,  and write 
    \begin{equation}\label{eq:A}
\mathscr M=
  \begin{pmatrix} \mathscr M_{11} & \mathscr M_{12}\\
   \mathscr M_{21}& \mathscr M_{22}. 
   \end{pmatrix}
\end{equation}
It will be enough to prove 
\begin{prop}\label{claim:1}
We have
$$ \mathscr W \,\mathscr M^{-1} \mathscr Z - \Big(\sum_{e\in E}y_{e}\underline{\ps}_{2}W_{e}\Big)
    \Big(\sum_{e\in E}y_{e} M_{e}\Big)^{-1}
    \Big(\sum_{e\in
      E}y_{e}Z_{e}\,{^{t}\underline{\ps}_{1}}\Big) = O_{\underline y}(1).$$ 
\end{prop}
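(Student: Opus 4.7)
The strategy is to reduce to the $g=h$ case already handled, by a block Schur-complement argument exploiting the fact that only the $h \times h$ homology block of $\mathscr M$ carries a growing contribution; everything else is bounded because $\Im(\Omega_0), \Im(W_0), \Im(Z_0)$ are bounded and $\widetilde M_e, \widetilde W_e, \widetilde Z_e$ are supported in the first $h$ coordinates.

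First I would unpack the block decomposition. Writing
\[
\mathscr M = \begin{pmatrix} A & B \\ C & D \end{pmatrix}, \qquad \mathscr W = (\mathscr W_1, \mathscr W_2), \qquad \mathscr Z = {}^{t}(\mathscr Z_1, \mathscr Z_2),
\]
one has $A = \widetilde M + A_0$ with $\widetilde M := \sum_e y_e M_e$ and $A_0, B, C, D$ all bounded (equal to the corresponding blocks of $\Im(\Omega_0)$), while $\mathscr W_1 = \widetilde W + E_W$ and $\mathscr Z_1 = \widetilde Z + E_Z$ with $\widetilde W := \sum_e y_e \underline{\ps}_2 W_e$, $\widetilde Z := \sum_e y_e Z_e \, {}^{t}\underline{\ps}_1$, and $E_W, E_Z, \mathscr W_2, \mathscr Z_2$ all bounded. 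After shrinking the polydisc so that $y_e \geq C$ with $C$ large, the positive-definite matrix $\widetilde M$ has smallest eigenvalue bounded below by a constant times $\min_e y_e$, so $\|A^{-1}\| = O(1/\min_e y_e)$. The Schur complement $S := D - CA^{-1}B$ is then a small bounded perturbation of $D = \Im(\Omega_0)_{22}$, which after further shrinking can be assumed uniformly positive definite (it approaches the imaginary part of the period matrix of the smooth part of $C_0$). Hence $S^{-1}$ is uniformly bounded, and the block-inversion formula yields
\[
\mathscr W \mathscr M^{-1} \mathscr Z = \mathscr W_1 A^{-1} \mathscr Z_1 + (\mathscr W_2 - \mathscr W_1 A^{-1} B)\, S^{-1}\, (\mathscr Z_2 - C A^{-1} \mathscr Z_1).
\]

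The heart of the argument is the uniform boundedness of $\widetilde M^{-1} \widetilde Z$ (and of $\widetilde W \widetilde M^{-1}$) on the region $y_e \geq C$. Since $\widetilde Z = MY\omega_1$ for an $\omega_1 \in \R^E$ attached to $\underline{\ps}_1$, Cramer's rule combined with Cauchy--Binet expresses each entry of $\widetilde M^{-1} \widetilde Z$ as a ratio of homogeneous polynomials of the same degree $h$ in the $y_e$, with denominator the first Symanzik polynomial $\psi_G = \sum_T y^{T^c}$ and both sums indexed only by complements of spanning trees (by Lemma~\ref{lem:aux1}). Since $\psi_G$ has all coefficients equal to $1$, the ratio is a convex combination of bounded quantities depending only on $G$ and $\omega_1$, hence uniformly bounded. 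Transferring this from $\widetilde M$ to $A$ costs only $O(1/y)$ via the resolvent identity $A^{-1} - \widetilde M^{-1} = -\widetilde M^{-1} A_0 A^{-1}$, so both $A^{-1}\mathscr Z_1$ and $\mathscr W_1 A^{-1}$ are uniformly bounded. Consequently the cross term in the block-inversion formula is a product of bounded factors, hence $O_{\underline y}(1)$.

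It remains to match the main term $\mathscr W_1 A^{-1} \mathscr Z_1$ with $\widetilde W \widetilde M^{-1} \widetilde Z$. Expanding via $\mathscr W_1 = \widetilde W + E_W$ and $\mathscr Z_1 = \widetilde Z + E_Z$, the three correction terms involving $E_W$ or $E_Z$ are $O_{\underline y}(1)$ by the boundedness step just proved. The remaining difference is
\[
\widetilde W (A^{-1} - \widetilde M^{-1}) \widetilde Z = -(\widetilde W \widetilde M^{-1})\, A_0\, (A^{-1} \widetilde Z),
\]
a product of three uniformly bounded quantities, hence $O_{\underline y}(1)$. Combining gives the proposition.

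The step I expect to be most delicate is the uniform boundedness of $\widetilde M^{-1} \widetilde Z$: naive operator-norm estimates only yield $O(\sqrt{\max_e y_e / \min_e y_e})$, which fails to be uniform, so one must genuinely exploit the combinatorial positivity of $\psi_G$ — either through the Cauchy--Binet/Cramer argument above, or equivalently by interpreting $\widetilde M^{-1}\widetilde Z$ as the cycle-basis coordinates of the $Y$-orthogonal projection of $\omega_1$ onto $H_1(G,\R) \subset \R^E$.
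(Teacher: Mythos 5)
Your argument is correct, but the route is genuinely different from the paper's, so a comparison is worthwhile. The paper's proof is a pure reduction to Theorem~\ref{claim:aux}: after the same block decomposition of $\mathscr M$ into the $h\times h$ homology block and the rest, the term $\mathscr W_1\,\mathscr N_{11}\,\mathscr Z_1$ is matched to the target by observing that $\mathscr N_{11}^{-1}=\sum_e y_e M_e+\mathscr A(s)$ for a bounded $\mathscr A$ and invoking Theorem~\ref{claim:aux}, and each of the three remaining block contributions $\mathscr W_i\,\mathscr N_{ij}\,\mathscr Z_j$ is treated by writing it as a difference of two quantities, each compared to the target via Theorem~\ref{claim:aux}. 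You instead prove the needed estimates from scratch and never invoke Theorem~\ref{claim:aux} at all. The two inputs you supply are the operator-norm bound $\|A^{-1}\|=O(1/\min_e y_e)$ and --- the genuine content, as you correctly flag --- the uniform boundedness of $(MYM^\tau)^{-1}MY\omega$, obtained via Cramer, Cauchy--Binet and the fact that the first Symanzik polynomial $\psi_G=\sum_{T\in\tree}y^{T^c}$ has all coefficients equal to $1$; the resolvent identity then transports everything from $\widetilde M$ to $A=\widetilde M+A_0$. This is sound, and in fact it proves more than the proposition: feeding your boundedness lemma into the block-determinant identity $\det T/\det M = S-{}^{t}WM^{-1}W$ with $T=N(Y+A)N^\tau$ and $M=M(Y+A)M^\tau$, together with the same resolvent estimates, yields a short direct proof of Theorem~\ref{claim:aux} itself, for an arbitrary (not necessarily symmetric) bounded perturbation $A$, bypassing the exchange-graph machinery entirely. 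So the trade-off is: the paper's route makes the exchange graph and its connectivity Theorem~\ref{thm:conn} the engine, which the authors present as of independent combinatorial interest, while yours is shorter and is essentially in the spirit of the normlike-functions approach of Burgos--de Jong--Holmes cited in the introduction. One bookkeeping remark: your convention $\mathscr W_1=\widetilde W+E_W$ silently corrects a sign typo in the paper's displayed definition of $\mathscr W$ (the minus sign there is inconsistent with the plus sign in Proposition~\ref{prop:2}); your sign is the one for which the proposition holds as stated.
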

\begin{proof}
  Let $\mathscr N =\mathscr M^{-1}$, and write
  \begin{equation}\label{eq:A}
\mathscr N=
  \begin{pmatrix} \mathscr N_{11} & \mathscr N_{12}\\
   \mathscr N_{21}& \mathscr N_{22}
   \end{pmatrix}
\end{equation}
 with $\mathscr N_{11}$ and $\mathscr N_{22}$ square matrices of size $h\times h$ and $(g-h)\times(g-h)$, respectively. 
  Writing 
  \[\mathscr W \mathscr N \mathscr Z = \mathscr W_1\,\mathscr N_{11} \mathscr Z_1 +\,\mathscr W_1 \,\mathscr N_{12} \mathscr Z_2\,+ \mathscr W_2 \,\mathscr N_{21} \mathscr Z_1\,+ 
  \mathscr W_2 \,\mathscr N_{22} \mathscr Z_2,\]
  in order to prove Claim~\ref{claim:1}, we prove 
$$ \mathscr W_1 \,\mathscr N_{11} \mathscr Z_1 - \Big(\sum_{e\in E}y_{e}\underline{\ps}_{2}W_{e}\Big)
    \Big(\sum_{e\in E}y_{e} M_{e}\Big)^{-1}
    \Big(\sum_{e\in
      E}y_{e}Z_{e}\,{^{t}\underline{\ps}_{1}}\Big) = O_{\underline y}(1),$$
      and  
      $$\mathscr W_1 \,\mathscr N_{12} \mathscr Z_2 = O_{\underline y}(1), \quad \mathscr W_2 \,\mathscr N_{21} \mathscr Z_1 =O_{\underline y}(1), \quad  \mathscr W_2 \,\mathscr N_{22} \mathscr Z_2=O_{\underline y}(1).$$
 For $y_1, \dots, y_m$ large enough, since $\mathscr M_{12}, \mathscr M_{21}, 
\mathscr M_{22}$ are bounded, we have the following expressions:
\[\mathscr N_{11} =  (\mathscr M_{11} - \mathscr M_{12} \mathscr M_{22}^{-1} \mathscr M_{21})^{-1} \qquad ,\qquad  \mathscr N_{22} =  (\mathscr M_{22} - \mathscr M_{21} \mathscr M_{11}^{-1} \mathscr M_{12})^{-1}\]
\[\mathscr N_{12} = - \mathscr M_{11}^{-1}\mathscr M_{12} (\mathscr M_{22} -\mathscr M_{21}\mathscr M_{11}^{-1}\mathscr M_{12})^{-1}, \qquad \textrm{and}\]
\[\mathscr N_{21} = - \mathscr M_{22}^{-1}\mathscr M_{21} (\mathscr M_{11} -\mathscr M_{12}\mathscr M_{22}^{-1}\mathscr M_{21})^{-1}.\]

 \medskip
 Note that $\mathscr M_{22}(s) = \Omega_0^{22}(s)$ for $s\in U$, and by our assumption on $U$, the matrices $\mathscr M_{22}^{-1}(s)$ lies in a compact set for $s\in U$.
 Thus, $\mathscr N_{11}  = \mathscr A(s)+ \sum_{e} y_eM_e$ for an $h\times h$ matrix-valued map $\mathscr A$ on $U$ taking values in a compact set  provided that $y_1, \dots, y_m$ are large. 
 It follows from the result in the case $g=h$ that 
 $$ \mathscr W_1 \,\mathscr N_{11} \mathscr Z_1 - \Big(\sum_{e\in E}y_{e}\underline{\ps}_{2}W_{e}\Big)
    \Big(\sum_{e\in E}y_{e} M_{e}\Big)^{-1}
    \Big(\sum_{e\in
      E}y_{e}Z_{e}\,{^{t}\underline{\ps}_{1}}\Big) = O_{\underline y}(1).$$

      The boundedness of the other three quantities can be proved similarly. For example, to treat the term $\mathscr W_1 \,\mathscr N_{12} \mathscr Z_2$, we observe first that 
      $\mathscr C = \mathscr M_{12} (\mathscr M_{22} -\mathscr M_{21}\mathscr M_{11}^{-1}\mathscr M_{12})^{-1}$ lies in a bounded compact set provided that $y_1, \dots, y_m$ are large enough. 
    We have 
    \[\mathscr W_1 \,\mathscr N_{12} \mathscr Z_2 = - \mathscr W_1 \mathscr M_{11}^{-1} \mathscr C = - \mathscr W_1 \mathscr M_{11}^{-1} \Bigl(\mathscr C_1 -\mathscr C_2\Bigr),\]
    with $\mathscr C_2 = \sum_{e\in
      E}y_{e}Z_{e}\,{^{t}\underline{\ps}_{1}}$ and $\mathscr C_1 =  \mathscr C + \mathscr C_2$.  
      
      Applying the result in the case $g=h$, we have for both the quantities for $k=1,2$ 
    $$ \mathscr W_1 \,\mathscr N_{11} \mathscr C_k - \Big(\sum_{e\in E}y_{e}\underline{\ps}_{2}W_{e}\Big)
    \Big(\sum_{e\in E}y_{e} M_{e}\Big)^{-1}
    \Big(\sum_{e\in
      E}y_{e}Z_{e}\,{^{t}\underline{\ps}_{1}}\Big) = O_{\underline y}(1).$$
    Taking now their difference shows what we wanted to prove. 
\end{proof}
 This finishes the proof of Theorem~\ref{thm:4}.     To prove Theorem~\ref{thm:main1}, we remark that by~\cite{ABBF}, the expression on the right hand side of Theorem~\ref{thm:4} is precisely the right hand side term in Theorem~\ref{thm:main1}.

\end{document}